\theoremstyle{plain}
\newtheorem{thm}{Theorem}[section]
\newtheorem{cor}[thm]{Corollary}
\newtheorem{lem}[thm]{Lemma}
\newtheorem{prop}[thm]{Proposition}
\theoremstyle{definition}
\numberwithin{figure}{section}
\newtheorem{rem}{Remark}
\theoremstyle{remark}
\numberwithin{equation}{section}
\setlist[enumerate]{left=0pt}
\begin{document}
	
	\thispagestyle{empty}
	
	\title[Embedding of line graph associated to the essential graph of rings]{Embeddings of the line graphs associated with the essential graphs of commutative rings}
	\author{Sakshi Jain, Mohd Nazim, Y. M. Borse}

	\address{Sakshi Jain, Department of Mathematics, Savitribai Phule Pune University, Pune-411007, India }
	\email{sakshi99chordiya@gmail.com}
	
	\address{Mohd Nazim, School of Computational Sciences, Faculty of Science and Technology, JSPM University, Pune-412207, India}
	\email{mnazim1882@gmail.com}

	\address{Y. M. Borse, Department of Mathematics, Savitribai Phule Pune University, Pune-411007, India}
	\email{ymborse11@gmail.com}

	\begin{abstract}
		Let $A$ be a finite commutative ring with unity $1 \neq 0.$  An ideal of \( A \) is said to be \emph{essential} if it has a non-zero intersection with every non-zero ideal of \( A \). The \emph{essential graph} of \( A \) is a simple undirected graph whose vertex set consists of all non-zero zero-divisors of \( A \). Two different vertices \( u \) and \( v \) are connected by an edge precisely when the ideal formed by the annihilator of their product \( uv \) is essential in \( A \).
		This paper examines the minimal embeddings of the line graph of the essential graph of $A$ into orientable surfaces as well as non-orientable surfaces. Our results include a complete classification of finite commutative rings for which the line graphs of their essential graphs is planar, outerplanar or have genus or crosscap number at most two. We also characterize all such non-local rings for which the line graph of their zero-divisor graph is outerplanar.
		
		\vskip.2cm
		\noindent
		{\bf Keywords:}  Essential graph, zero-divisor graph, finite commutative rings, line graph, planarity, genus, crosscap.
		\vskip.2cm
		\noindent
		{\bf Mathematics Subject classification:}  05C10, 05C25, 13A15
	\end{abstract}
	\maketitle    
	
	
	\section{Introduction}
	
	\noindent In algebraic graph theory, the creation of graphs from rings is considered an important construction. Analyzing graphs that arise from rings offers valuable insights into the connection between the algebraic structures of the rings and the combinatorial properties of the resulting graphs. This direction of research began with the concept of zero-divisor graphs of commutative rings, first introduced by Beck~\cite{beck} in 1988. Anderson and Livingston \cite{anderson-livingston} later modified this concept in 1999. In the \textit{zero-divisor graph} \( \Gamma(A) \) of a commutative ring \( A \) with identity, the vertices represent the non-zero zero-divisors of \( A \), and two distinct vertices are connected by an edge if their product in \( A \) is zero.  Over the years, various researchers have examined these graphs from structural, spectral, and topological perspectives; see ~\cite{sonawane, hjwang, Asir}.	
	Several generalizations of the zero-divisor graphs with nearly the same vertex set have been introduced, including the annihilator graph\cite{badawi}, cozero-divisor graph\cite{afkhamicozerodivisor} and weakly zero-divisor graph\cite{weaklyzerodivisor}. Among these, Nikmehr et al.~\cite{Nik1} introduced one such generalization, known as the \textit{essential graph} in 2017. Denoted by \( EG(A) \), the essential graph of a commutative ring \( A \) has as its vertex set the non-zero zero-divisors of \( A \), where two distinct vertices are adjacent if the annihilator of their product is an essential ideal in the ring $A.$  As a result, the zero-divisor graph \( \Gamma(A) \) naturally appears as a subgraph of \( EG(A) \). Nikmehr et al.~\cite{Nik1} further established that the essential graph is always connected, with diameter at most $3$ and girth at most $4$ whenever it contains a cycle. Subsequent studies have focused on both the structural and topological characteristics of essential graph; see~\cite{Nik1, selvakumar-2, selvakumar-subajini, kalaimurugan}. In particular, graphs such as trees, unicyclic graphs, split graphs, and outerplanar graphs were classified by Selvakumar et al.~\cite{selvakumar-subajini} in terms of the finite commutative rings \( A \) for which the essential graph \( EG(A) \) exhibits these structures. They also classified the rings for which \( EG(A) \) is planar, toroidal, or embeddable in the projective plane.  Moreover, Selvakumar et al.~\cite{selvakumar-2} characterized the rings whose essential graphs have genus two. In a more recent study, Kalaimurugan et al.~\cite{kalaimurugan} examined the interplay among zero-divisor graphs, essential graphs, and nilpotent graphs, extending genus-two results from essential graphs to their nilpotent counterparts. Various extensions of this graph are introduced and studied; see \cite{Nazim, Amjadi}
	
	\par A fundamental result in graph theory asserts that every connected graph except $K_3$ and $K_{1,3}$ is uniquely determined by its line graph, establishing a one-to-one mapping between connected graphs and their line graphs.
	Expanding upon this direction, Barati et al.~\cite{barati} classified all finite commutative rings whose zero-divisor graphs are either line graphs or complements of line graphs.
	A complete characterization of finite commutative rings yielding essential graphs that are line graphs or complements of line graphs was established in the work of Rehman et al.~\cite{rehman}. They also studied a similar classification for weakly zero-divisor graphs. More broadly, characterizing line graphs arising from algebraic objects such as groups and rings has remained a central topic of interest in the literature; for further developments in this direction, see~\cite{bera}.
	
	\par  The study of line graph embeddings has been the subject of several notable investigations across different algebraically defined graphs. A foundational result in this direction was provided by Bénard~\cite{MR485482}, who established a lower bound for the genus of a line graph of a complete graph and analyzed its behavior across specific graph classes. Later, the work of Chiang-Hsieh et al.~\cite{chi-hsi} focused on zero-divisor graphs, offering a classification of all finite commutative rings whose line graphs exhibit genus and crosscap number at most two.
	Similarly, the genus of line graphs arising from co-maximal graphs was studied by Huadong et al.~\cite{Huadongcomaximal}, with an emphasis on embeddings into surfaces up to genus two.  More recently, a comprehensive study by Shariq et al.~\cite{mshariq} examined the embeddings of the line graphs associated with annihilator graphs. They classified all finite commutative rings for which these line graphs are planar, outerplanar, or possess genus and crosscap number no greater than two.
	Inspired by the previous studies, we aim to investigate the embeddings on orientable and non-orientable surfaces of the line graphs associated with the essential graphs. 
	
	In this paper, we establish a complete characterization of the finite commutative rings that corresponds to planar or outerplanar line graphs of the essential graphs. Moreover, we characterize the finite commutative rings for which the genus or crosscap number of the line graphs of the essential graphs is at most two. As an additional result, we investigate non-local rings whose zero-divisor graphs have line graphs that are outerplanar. These results extend the classifications known for the line graphs of zero-divisor graphs to the broader framework of essential graphs. Section 2 outlines the necessary preliminaries, Section 3 focuses on planarity and outerplanarity, and Section 4 and 5 addresses the genus and crosscap properties, respectively.  
	
	\section{Preliminaries}
	
	
	\noindent In this section, we outline some basic preliminary concepts of graphs and rings. We begin with graphs. Let \(G\) be an undirected simple graph with vertex set and edge set denoted by \(V(G)\) and \(E(G)\), respectively. For any vertex \( v \in V(G) \), the degree of \( v \), written as \( \deg(v) \), refers to the total number of edges incident to it. A graph is called \emph{complete} if every pair of distinct vertices is connected by exactly one edge; such a graph on \( m \) vertices is denoted by \( K_m \). A \emph{complete bipartite graph} is a graph \( G \) such that its vertex set can be divided into two non-overlapping subsets \( S_1 \) and \( S_2 \) such that each vertex in \( S_1 \) is adjacent to every vertex in \( S_2 \), and there are no edges connecting vertices within the same subset. This graph is represented by \( K_{n,m} \), where \( n = |S_1| \) and \( m = |S_2| \). Extending this idea, a \emph{complete tripartite graph}, denoted by \( K_{m,n,r} \), consists of three mutually disjoint vertex sets of sizes \( m \), \( n \), and \( r \), with edges connecting every vertex in one subset to all vertices in the other two, and no edge connecting vertices within the same subset.  For a graph $G$, the vertex set of a \emph{line graph} $L(G)$ is the set of edges of $G,$ and two vertex are adjacent precisely when the corresponding edges in \( G \) share a common vertex.
	
	\par All rings considered in this paper are finite, commutative and with unity $1 \neq 0.$ Let $A$ be one of such rings. A ring \( A \) is termed \emph{local} if it contains exactly one maximal ideal \( \mathfrak{m}.\) If \( A \) is not local, then by the Artinian ring structure theorem~\cite{atiyah-macdonald}, \( A \) decomposes as a finite direct product of local rings. Moreover, \( A \) is \emph{reduced} precisely when it is isomorphic to a product \( F_1 \times \cdots \times F_k \), where each \( F_i \) is a finite field and \( k \in \mathbb{N} \). For any \( s \in A \), its \emph{annihilator}, denoted \( \mathrm{ann}_A(s) \), is defined as \( \{ r \in A \mid sr = 0 \} \). An ideal \( I \subseteq A \) is said to be \emph{essential} if it intersects non-trivially with every non-zero ideal of \( A \). Note that \( \mathrm{ann}_A(0) = A \), and that \( A \) itself is an essential ideal. The set of zero-divisors is written as \( Z(A) \), and the group of units is denoted by \( A^\times \).
	For  \( S \subseteq A\), define \( S^* = S \setminus \{ 0 \}. \)  For an integer \( n \geq 2 \), let \( \mathbb{Z}_n \) denote the ring of integers modulo \( n \) and \( \mathbb{F}_n \) denote the finite field with \( n \) elements. An element \( s \in A \) is called \emph{nilpotent} if there exists some integer \( k >0 \) such that \( s^k = 0 \). In a local ring, every non-zero element is either a unit or nilpotent. For notation or terminology related to ring theory that is not introduced here, we refer~\cite{atiyah-macdonald}. 
	
	The following results highlight important structural aspects of the essential graph.
	
	\begin{lem}\label{complete bipartite}
		For finite fields $\mathbb{F}_n$ and $ \mathbb{F}_m,$ $EG(\mathbb{F}_n\times \mathbb{F}_m) \cong K_{n -1, m -1}.$
	\end{lem}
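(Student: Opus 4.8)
The plan is to first pin down the vertex set, then show that the essential-graph adjacency in $\mathbb{F}_n\times\mathbb{F}_m$ collapses to the condition $uv=0$, and finally read off the complete bipartite structure by a direct product computation. Write $A=\mathbb{F}_n\times\mathbb{F}_m$. Since $A$ is a product of two fields, an element $(a,b)\in A$ is a unit exactly when $a\neq 0$ and $b\neq 0$, and it is zero only when $a=b=0$. Hence the non-zero zero-divisors of $A$ form the set $V(EG(A))=Z(A)^*=S_1\cup S_2$, where $S_1=\{(a,0):a\in\mathbb{F}_n^*\}$ and $S_2=\{(0,b):b\in\mathbb{F}_m^*\}$ are disjoint of sizes $n-1$ and $m-1$ respectively.

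Next I would classify the essential ideals of $A$. The ideals of $A$ are exactly $0$, $\mathbb{F}_n\times 0$, $0\times\mathbb{F}_m$, and $A$. The ideal $0$ is not essential, and $(\mathbb{F}_n\times 0)\cap(0\times\mathbb{F}_m)=0$ shows that neither of the two proper non-zero ideals is essential, whereas $A$ meets every non-zero ideal non-trivially. Thus $A$ is the unique essential ideal of $A$. Consequently, for $s\in A$ the ideal $\mathrm{ann}_A(s)$ is essential if and only if $\mathrm{ann}_A(s)=A$, i.e. if and only if $s=0$. Therefore two distinct vertices $u,v$ are adjacent in $EG(A)$ precisely when $uv=0$; in this case $EG(A)$ coincides with the zero-divisor graph $\Gamma(A)$.

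Finally I would compute products. For $u=(a_1,0),v=(a_2,0)\in S_1$ we have $uv=(a_1a_2,0)\neq 0$ because $\mathbb{F}_n$ has no non-zero zero-divisors, so no two vertices of $S_1$ are adjacent; symmetrically no two vertices of $S_2$ are adjacent. For $u=(a,0)\in S_1$ and $v=(0,b)\in S_2$ we have $uv=(0,0)=0$, so every vertex of $S_1$ is adjacent to every vertex of $S_2$. Hence $EG(A)$ is the complete bipartite graph with parts $S_1$ and $S_2$, i.e. $EG(\mathbb{F}_n\times\mathbb{F}_m)\cong K_{n-1,m-1}$. There is no genuine obstacle in this argument; the only step needing a moment's attention is the observation that the sole essential ideal of a product of two fields is the whole ring, which is what reduces essential-adjacency to the zero-product condition.
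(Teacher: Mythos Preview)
Your proof is correct and follows the same bipartition-then-verify-edges approach as the paper, though the paper's version is far terser: it simply names the two parts $X=\{(u,0):u\in\mathbb{F}_n^*\}$ and $X'=\{(0,v):v\in\mathbb{F}_m^*\}$ and asserts the conclusion, leaving the essential-ideal justification implicit (effectively relying on the subsequent Lemma that $EG(A)=\Gamma(A)$ for reduced $A$). Your explicit classification of the four ideals of $\mathbb{F}_n\times\mathbb{F}_m$ and the observation that only $A$ itself is essential is exactly the detail the paper suppresses.
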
 
	\begin{proof}
		Since $ \mathbb{F}_n\times \mathbb{F}_m = \{ (u, v) \colon \, u \in \mathbb{F}_n, ~v \in \mathbb{F}_m\},$ then its essential graph is a complete bipartite graph with bipartition $ X = \{ (u,0) \colon \, u \in \mathbb{F}_n^* \} $ and $ X' = \{ (0,v) \colon \, v \in \mathbb{F}_m^* \} $ of size $n -1$ and $m -1$, respectively.
	\end{proof}
	
	\begin{lem}\label{Zero divisor graph = essential graph}\textnormal{\cite{Nik1}}
		For reduced ring \( A \), it follows that \( EG(A) = \Gamma(A) \).
	\end{lem}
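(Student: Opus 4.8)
The plan is to show that $\Gamma(A)$ and $EG(A)$ have the same vertex set and the same edge set. The vertex sets agree by definition, since both graphs are built on $Z(A)^{*}$. For the edges, one inclusion is immediate: if $uv = 0$ then $\mathrm{ann}_{A}(uv) = \mathrm{ann}_{A}(0) = A$, which is essential, so every edge of $\Gamma(A)$ is an edge of $EG(A)$. Hence the whole content is the reverse inclusion, and for this I would isolate the following claim: \emph{if $A$ is reduced and $\mathrm{ann}_{A}(s)$ is essential, then $s = 0$} (equivalently, the only essential ideal of a reduced ring is $A$ itself).

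To prove the claim, suppose $s \neq 0$ and consider the principal ideal $(s) = As$, which is nonzero. I would intersect it with $\mathrm{ann}_{A}(s)$: a typical element of $\mathrm{ann}_{A}(s) \cap (s)$ has the form $as$ with $a \in A$ and $as \cdot s = 0$, i.e. $as^{2} = 0$. Squaring gives $(as)^{2} = a(as^{2}) = 0$, so $as$ is nilpotent; since $A$ is reduced this forces $as = 0$. Thus $\mathrm{ann}_{A}(s) \cap (s) = \{0\}$ while $(s) \neq 0$, so $\mathrm{ann}_{A}(s)$ is not essential. The contrapositive is exactly the claim.

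Applying the claim with $s = uv$: if $u$ and $v$ are adjacent in $EG(A)$, then $\mathrm{ann}_{A}(uv)$ is essential, hence $uv = 0$, so $u$ and $v$ are adjacent in $\Gamma(A)$. Together with the easy inclusion, the edge sets coincide, and therefore $EG(A) = \Gamma(A)$.

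An alternative route, perhaps closer in spirit to how the paper works throughout with finite rings, is to invoke the structure theorem to write $A \cong F_{1} \times \cdots \times F_{k}$ with each $F_{i}$ a finite field, compute $\mathrm{ann}_{A}(x) = \prod_{i} I_{i}$ where $I_{i} = F_{i}$ if $x_{i} = 0$ and $I_{i} = 0$ otherwise, note that the minimal nonzero ideals are the coordinate ideals $e_{j}A$, and observe that $\mathrm{ann}_{A}(x)$ meets $e_{j}A$ nontrivially only when $x_{j} = 0$; essentiality then forces $x = 0$. I do not anticipate any genuine obstacle: the single point needing a moment's thought is the realization that reducedness collapses "essential annihilator" to the trivial case, and once that observation is in hand the remainder is routine bookkeeping.
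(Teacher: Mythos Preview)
Your argument is correct. The paper itself does not prove this lemma; it is stated with a citation to \cite{Nik1} and used as a black box, so there is no ``paper's proof'' to compare against. Your main line---showing that for a reduced ring the condition ``$\mathrm{ann}_A(uv)$ is essential'' forces $uv=0$ by intersecting $\mathrm{ann}_A(s)$ with the principal ideal $(s)$ and invoking reducedness---is clean and works in any commutative reduced ring, not just finite ones.

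One small slip: the parenthetical ``equivalently, the only essential ideal of a reduced ring is $A$ itself'' is not actually equivalent to your claim, and is false in general (e.g.\ $2\mathbb{Z}$ is essential in $\mathbb{Z}$). It \emph{is} true for finite reduced rings, as your alternative structure-theorem argument shows, but your claim only concerns annihilator ideals, and that is all you need. Simply drop the parenthetical or rephrase it as ``in particular, in a finite reduced ring the only essential ideal is $A$''.
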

	
	\begin{lem}\label{cg}\textnormal{\cite{Nik1}}
		Suppose \( A \) is a non-reduced ring. Then:
		\begin{enumerate}[label=\textnormal{(\roman*)}]
			
			\item Each non-zero nilpotent element in \( A \) is connected to every vertex in \( EG(A) \).
			
			\item The collection of all non-zero nilpotent elements induces a clique in \( EG(A) \).
		\end{enumerate}

	\end{lem}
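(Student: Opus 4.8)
The plan is to reduce the whole statement to the classical fact that, in an Artinian ring, an ideal is essential precisely when it contains the socle. First I would fix notation: since $A$ is finite it is Artinian, so its Jacobson radical $\mathrm{Jac}(A)$ is nilpotent; as $\mathrm{Nil}(A)\subseteq\mathrm{Jac}(A)$ always holds, this gives $\mathrm{Nil}(A)=\mathrm{Jac}(A)=:N$. Thus the non-zero nilpotent elements of $A$ are exactly the non-zero elements of $N$, and each of them is a zero-divisor (if $k\ge 2$ is least with $x^{k}=0$, then $x\cdot x^{k-1}=0$ with $x^{k-1}\ne 0$), hence a genuine vertex of $EG(A)$.

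Next I would prove the key claim: $\mathrm{ann}_A(N)$ is an essential ideal, and therefore so is every ideal of $A$ containing it. Let $I$ be a non-zero ideal. Since $A$ is Artinian, $I$ contains a minimal ideal $I'$; being a minimal nonzero ideal, $I'$ is a simple $A$-module, and $N I'\subsetneq I'$ by Nakayama's lemma (as $N=\mathrm{Jac}(A)$ and $I'$ is finitely generated), so $NI'$, being a submodule of the simple module $I'$, must be $0$; that is, $I'\subseteq\mathrm{ann}_A(N)$. Hence $I\cap\mathrm{ann}_A(N)\supseteq I'\ne 0$, which shows $\mathrm{ann}_A(N)$ is essential, and then any ideal $K$ with $\mathrm{ann}_A(N)\subseteq K\subseteq A$ is essential as well.

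With this in hand, part (i) is short: let $x$ be a non-zero nilpotent and $v$ any other vertex of $EG(A)$. Since $x\in N$ and $N$ is an ideal, $xv\in N$; consequently, if $sN=0$ then $s(xv)=0$, so $\mathrm{ann}_A(N)\subseteq\mathrm{ann}_A(xv)$. By the key claim $\mathrm{ann}_A(xv)$ is essential, whence $x$ and $v$ are adjacent (the sub-case $xv=0$ being trivial anyway, as $\mathrm{ann}_A(0)=A$). Part (ii) then follows at once: the non-zero nilpotents form a subset of $V(EG(A))$ and, by (i), each of them is adjacent to all the remaining vertices, in particular to every other non-zero nilpotent, so they induce a clique. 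The only step needing any care is the key claim, i.e.\ confirming that the socle $\mathrm{ann}_A(N)$ is essential and that $\mathrm{Nil}(A)=\mathrm{Jac}(A)$; both are standard for finite (Artinian) commutative rings, so I do not anticipate a real obstacle here.
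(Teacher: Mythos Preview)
Your argument is correct. The paper does not supply its own proof of this lemma; it simply quotes the result from Nikmehr--Nikandish--Bakhtyiari \cite{Nik1}, so there is no in-text proof to compare against. Your route through the Artinian structure---identifying $\mathrm{Nil}(A)=\mathrm{Jac}(A)$, showing via Nakayama that every minimal ideal lies in $\mathrm{ann}_A(N)$ (i.e.\ the socle) and hence that $\mathrm{ann}_A(N)$ is essential, and then observing $\mathrm{ann}_A(N)\subseteq\mathrm{ann}_A(xv)$ whenever $x\in N$---is a clean, self-contained justification. Every step is standard for finite commutative rings and the deduction of (ii) from (i) is immediate, so nothing further is needed.
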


	\begin{cor}\label{fcg} For local ring \( A \), \( EG(A) \) is a complete graph.
	\end{cor}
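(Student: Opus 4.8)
The plan is to reduce the statement directly to Lemma~\ref{cg}(ii) by first pinning down the vertex set of $EG(A)$ when $A$ is local. I would recall the structural fact noted in Section~2: in a finite local ring $A$ with maximal ideal $\mathfrak{m}$, every non-unit is nilpotent, so $Z(A) = \mathfrak{m}$ and hence $V(EG(A)) = \mathfrak{m}^* = \mathfrak{m} \setminus \{0\}$; moreover this set coincides with the set of all non-zero nilpotent elements of $A$, since a non-zero nilpotent is certainly a zero-divisor and, conversely, a non-zero zero-divisor lies in $\mathfrak{m}$ and is therefore nilpotent.

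Next I would split into two cases according to whether $A$ is a field. If $A$ is a field, then $\mathfrak{m} = \{0\}$, so $EG(A)$ has no vertices and is the null graph, which is vacuously complete. If $A$ is not a field, then $\mathfrak{m}^* \neq \emptyset$ and $A$ is non-reduced (its maximal ideal contains a non-zero nilpotent), so Lemma~\ref{cg}(ii) applies and tells us that the collection of all non-zero nilpotent elements induces a clique in $EG(A)$. By the previous step that collection is precisely $V(EG(A))$, so every pair of distinct vertices of $EG(A)$ is adjacent; that is, $EG(A) \cong K_{|\mathfrak{m}|-1}$.

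I do not anticipate a genuine obstacle here: the corollary is essentially a repackaging of Lemma~\ref{cg}, and the only points needing a word of care are the degenerate case in which $A$ is a field (where the assertion is read vacuously) and the elementary observation, following from finiteness together with the preliminaries, that in a finite local ring the non-zero zero-divisors are exactly the non-zero nilpotent elements.
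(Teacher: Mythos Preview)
Your proposal is correct and follows essentially the same approach as the paper: the paper's proof simply notes that in a local ring every non-zero zero-divisor is nilpotent and then invokes Lemma~\ref{cg}. Your version is slightly more detailed (explicitly handling the field case and spelling out $V(EG(A)) = \mathfrak{m}^*$), but the underlying argument is identical.
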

	
	\begin{proof}
		Since $A$ is a local ring, every non-zero zero-divisor is nilpotent. Now by Lemma \ref{cg} the result follows. 
	\end{proof}
	
	The following table lists all local rings \( A \) whose essential graphs \( EG(A) \) contain upto $6$ vertices. Observe that no local ring possesses precisely five nonzero zero-divisors.
	

	\begin{table}[h!]
		\centering
		
		\begin{tabular} { |  >{\centering\arraybackslash}p {2 cm} | >{\centering\arraybackslash}p {7 cm} | >{\centering\arraybackslash}p {3 cm} | }
			\hline
			$|Z(A)^*|$ &
			$A$ - Local ring & 		
			Graph $EG(A)$ \\
			\hline
			\rule{0pt}{15pt} 1 & $\mathbb{Z}_4$, $\frac{\mathbb{Z}_2[x]}{\left\langle x^2 \right\rangle}$   & $K_1$ \\
			\hline
			\rule{0pt}{15pt} 2 & $\mathbb{Z}_9$, $\frac{\mathbb{Z}_3[x]}{\left\langle x^2 \right\rangle}$ & $K_2$ \\
			\hline
			\rule{0pt}{20pt} 3 &
			$\mathbb{Z}_8$,
			$\frac{\mathbb{Z}_2[x]}{\left\langle x^3 \right\rangle}$, $\frac{\mathbb{Z}_4[x]}{\left\langle x^3, x^2 - 2 \right\rangle}$, $\frac{\mathbb{Z}_4[x]}{\left\langle 2x, x^2 \right\rangle}$, $\frac{\mathbb{F}_4[x]}{\left\langle x^2 \right\rangle}$, $\frac{\mathbb{Z}_4[x]}{\left\langle x^2 + x + 1 \right\rangle}$,
			$\frac{\mathbb{Z}_2[x, y]}{\left\langle x^2, xy, y^2 \right\rangle}$ & $K_3$ \\
			\hline
			\rule{0pt}{15pt} 4 & $\mathbb{Z}_{25}$, $\frac{\mathbb{Z}_5[x]}{\left\langle x^2 \right\rangle}$ & $K_4$ \\
			\hline
			\rule{0pt}{15pt} 6 &  $\mathbb{Z}_{49}$, $\frac{\mathbb{Z}_7[x]}{\left\langle x^2 \right\rangle}$  & $K_6$\\
			\hline
			
		\end{tabular}
		\vskip0.2cm
		\caption{}\label{table of local rings}
	\end{table}

	We now provide definitions of some topological aspects of graphs. A graph is said to be \emph{embeddable} on a two-dimensional connected topological space, known as surface, if it can be drawn without edge crossings, except at shared vertices. A graph \( G \) is \emph{planar} if such an embedding exists in the plane. An embedding is called \emph{outerplanar} when all vertices lie along the boundary of the unbounded region in the plane. The \emph{genus} of a graph \( G \) represents the minimum integer \( k \geq 0 \) such that \( G \) can be drawn on a surface having \( k \) handles (visualize a surface with \( k \) loops) and without any edge crossings, written as \( \gamma(G).\) In particular, graphs with genus \( 1 \) and genus \( 2 \) are termed \emph{toroidal} and \emph{double-toroidal}, respectively, while graphs with genus 0 are planar graphs. 
	For a non-negative integer \( k \), we can create a surface by starting with a sphere and attaching \( k \) crosscaps to it. Any compact non-orientable connected surface can be thought of as a surface similar to this, denoted as \( N_{k} \). The \textit{crosscap number} or non-orientable genus \( \overline{\gamma}(G) \) of a graph \( G \) is defined as the least \( k \) such that \( G \) can be drawn on \( N_k \) without edge crossings. If \( \overline{\gamma}(G) = 1 \), the graph is called a \textit{projective plane graph}; if \( \overline{\gamma}(G) = 2 \), it is termed a \textit{Klein-bottle graph}. Additionally, for any subgraph \( H \subseteq G \), it holds that \( \gamma(H) \leq \gamma(G) \) and \( \overline{\gamma}(H) \leq \overline{\gamma}(G) \). For a detailed overview, see \cite{twhite}.
	
	The subsequent results focus on the genus and crosscap of graphs, and their line graphs.
	
	\begin{lem}\label{lm1} \textnormal{\cite{chi-hsi}}
		Let \( G \) be a connected simple graph with vertex-disjoint connected subgraphs \( H_1 \) and \( H_2 \), where
		\begin{enumerate}[label=\textnormal{(\alph*).}]
			
			\item \( H_1 \) is planar and 3-connected,
			
			\item Each vertex of \( H_1 \) is adjacent to some vertex in \( H_2 \).
		\end{enumerate}
		Then \( \gamma(G) > \gamma(H_2) \) and \( \overline{\gamma}(G) > \overline{\gamma}(H_2) \).
	\end{lem}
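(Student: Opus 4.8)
The plan is to argue by contradiction, splitting a hypothetical economical embedding of $G$ into two disjoint embedded pieces --- one carrying $H_{2}$, the other carrying an \emph{apex graph} over $H_{1}$ --- whose genera (resp. crosscap numbers) must add up. Since $H_{2}\subseteq G$ we always have $\gamma(H_{2})\le\gamma(G)$, so if the orientable part of the statement failed we would have $\gamma(G)=\gamma(H_{2})=:g$ and $G$ would embed in the orientable surface $\Sigma$ of genus $g$. Fix such an embedding and put $G_{2}:=G-V(H_{1})$, an induced subgraph that still contains $H_{2}$. Because $H_{1}$ is connected and its drawing is disjoint from the drawing of $G_{2}$, all of $H_{1}$ lies in a single face $f$ of the induced embedding of $G_{2}$ in $\Sigma$. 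Any edge of $G$ from a vertex $v\in V(H_{1})$ to a vertex of $G_{2}$ is drawn inside the closed region $\overline{f}$ and therefore has its $G_{2}$-endpoint on the boundary $\partial f$; by hypothesis (b) every vertex of $H_{1}$ is the tail of such an edge, so \emph{every} vertex of $H_{1}$ is joined by an edge of $G$ to $\partial f$.

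Next I would cut $\Sigma$ along $\partial f$. Writing $\Sigma=\overline{f}\cup\overline{\Sigma\setminus\overline{f}}$ as a union of two compact surfaces-with-boundary meeting along the $b\ge 1$ circles of $\partial f$, additivity of the Euler characteristic gives $g=h+h'+(b-1)$, where $h,h'$ are the genera of the two pieces. The piece $\overline{\Sigma\setminus\overline{f}}$ contains $G_{2}\supseteq H_{2}$, so capping its boundary circles with disks yields $\gamma(H_{2})\le h'$. The piece $\overline{f}$ contains a drawing of $H_{1}$ together with all the edges running from $V(H_{1})$ to $\partial f$; collapsing $\partial f$ to a single point $w$ turns this into a drawing of the graph $H_{1}^{+}$ obtained from $H_{1}$ by adding one new vertex $w$ adjacent to every vertex of $H_{1}$, and resolving the resulting pseudosurface into an honest surface costs at most $b-1$ extra handles, so $\gamma(H_{1}^{+})\le h+(b-1)$. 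Finally, a $3$-connected graph has minimum degree at least $3$, hence is not outerplanar, hence has no plane embedding with all its vertices on one face; equivalently $H_{1}^{+}$ is non-planar and $\gamma(H_{1}^{+})\ge 1$. Combining, $g=\bigl(h+(b-1)\bigr)+h'\ge\gamma(H_{1}^{+})+\gamma(H_{2})\ge 1+\gamma(H_{2})>\gamma(H_{2})=g$, a contradiction. The crosscap statement follows by running the identical argument with $\overline{\gamma}$ in place of $\gamma$ throughout, using the analogous Euler-characteristic bookkeeping, the monotonicity $\overline{\gamma}(H)\le\overline{\gamma}(\Sigma)$ for $H$ embedded in $\Sigma$, and again the non-planarity of $H_{1}^{+}$.

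The step I expect to demand the most care is the surface-splitting: the face $f$ need not be an open disk --- even a minimum-genus, and hence cellular, embedding of $G$ only forces the faces of $G$ itself to be disks, not those of the subgraph $G_{2}$ --- and $\partial f$ may be disconnected, so the naive quotient ``$\overline{f}$ with $\partial f$ shrunk to a point'' is a pseudosurface rather than a surface. Making precise that this pseudosurface embeds in a surface of genus at most $h+(b-1)$ (identifying $b$ points of an orientable surface into one costs exactly $b-1$ handles), together with tracking orientability in the non-orientable case, is the technical heart of the proof. Everything else --- that $H_{1}$ occupies a single face, that the relevant neighbours sit on $\partial f$, and that no $3$-connected graph is outerplanar --- is routine; note in particular that the $3$-connectivity of $H_{1}$ is used only to guarantee that $H_{1}$ is not outerplanar, i.e. that $H_{1}^{+}$ is non-planar.
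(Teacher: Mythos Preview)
The paper does not prove this lemma; it is quoted without proof from Chiang-Hsieh, Lee and Wang~\cite{chi-hsi}, so there is no in-paper argument to compare against. Your proposal therefore has to be assessed on its own merits.

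Your strategy is the natural one and is essentially what lies behind the cited result: embed $G$ minimally, strip out $H_{1}$, locate the face $f$ of $G_{2}=G-V(H_{1})$ that swallows $H_{1}$, split the surface along $\partial f$, and compare Euler characteristics. The two substantive ingredients --- that every vertex of $H_{1}$ sends an arc to $\partial f$ by hypothesis~(b), and that the apex graph $H_{1}^{+}$ is non-planar because a $3$-connected graph has minimum degree $\ge 3$ and hence is not outerplanar --- are correctly isolated and correctly justified. The additivity $g=h+h'+(b-1)$ is right once one works with a regular neighbourhood of $G_{2}$ so that the cut is along honest circles; you flag this yourself.

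Two places deserve a little more than you give them. First, $G_{2}$ need not be connected, so $\Sigma\setminus f$ may have several components; this is harmless since $H_{2}$ sits in one of them and the genus of that component is at most $h'$, but it should be said. Second, in the non-orientable case the bookkeeping is genuinely more delicate than ``the identical argument with $\overline{\gamma}$ in place of $\gamma$'': after cutting, one or both pieces may be orientable, and the relation between the crosscap number of $\Sigma$ and the invariants of the pieces involves a case analysis (or a uniform use of Euler genus). It still goes through --- the inequality $\overline{\gamma}(H_{1}^{+})\ge 1$ is again just non-planarity --- but ``analogous'' undersells the work. With those caveats, your outline is sound.
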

	
	\begin{prop}
		
		\label{genus and crosscap of Kn and Knm} \textnormal{\cite{twhite}}
		For positive integers \(n\) and \(m\), we have:
		
		\vspace{0.2cm}
		\begin{enumerate}[label=\textnormal{(\roman*).}]
			
			\item If \(m \geq 3\):
			
			\begin{enumerate}[label=\textnormal{(\alph*).}]
				
				\item  $\gamma(K_m) = \lceil \frac{(m-4)(m-3)}{12} \rceil.$ 
				\vskip.2cm
				\item $\overline{\gamma}(K_m)=$ 
				$\begin{cases}
					\lceil \frac{(m-4)(m-3)}{6}\rceil & \text{if}\; m \neq 7\vspace{0.2cm}\\ 
					\qquad	3 & \text{if}\; m = 7.
				\end{cases}$
				
			\end{enumerate}
			\vspace{0.2cm}
			\item If \(n, m \geq 2\):
			
			\begin{enumerate}[label=\textnormal{(\alph*).}]
				\vskip.2cm
				\item  $\gamma(K_{n,m}) = \lceil \frac{(n-2)(m-2)}{4} \rceil$.
				\vskip.2cm
				\item  $\overline{\gamma}(K_{n,m})= \lceil \frac{(m-2)(n-2)}{2} \rceil$.
			\end{enumerate}  
		\end{enumerate} 
	\end{prop}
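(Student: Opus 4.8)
The plan is to derive each of the four families of formulas by pairing an Euler-characteristic lower bound with an explicit embedding that attains it. I would begin with the lower bounds, which are elementary. Suppose a simple connected graph $G$ with $V$ vertices and $E$ edges $2$-cell embeds in the orientable surface of genus $g$; then $V - E + F = 2 - 2g$, where $F$ is the number of faces, while for the nonorientable surface $N_k$ one has $V - E + F = 2 - k$. Since $K_m$ has girth $3$, every face of any embedding is bounded by at least three edges, so $2E \geq 3F$, i.e. $F \leq \tfrac{2}{3}E$. Substituting $V = m$ and $E = \binom{m}{2}$ gives $2 - 2g \leq m - \tfrac{m(m-1)}{6}$, hence $2g \geq \tfrac{(m-3)(m-4)}{6}$ and, since $g$ is an integer, $\gamma(K_m) \geq \lceil \tfrac{(m-3)(m-4)}{12}\rceil$; the same computation with $2-k$ in place of $2-2g$ yields $\overline{\gamma}(K_m) \geq \lceil \tfrac{(m-3)(m-4)}{6}\rceil$. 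For $K_{n,m}$, bipartiteness forces girth $4$, so $2E \geq 4F$; with $V = n+m$ and $E = nm$ this gives $2 - 2g \leq (n+m) - \tfrac{nm}{2}$, hence $\gamma(K_{n,m}) \geq \lceil \tfrac{(n-2)(m-2)}{4}\rceil$, and similarly $\overline{\gamma}(K_{n,m}) \geq \lceil \tfrac{(n-2)(m-2)}{2}\rceil$. These lower bounds already equal the asserted values, so everything reduces to producing matching embeddings.

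For the orientable upper bounds I would use the rotation-system description of $2$-cell embeddings and prescribe rotations explicitly. For $K_{n,m}$ there is a direct quadrilateral-embedding construction (due to Ringel): order the two colour classes cyclically and choose rotations so that nearly every face is a $4$-gon, achieving $F$ essentially equal to $\tfrac{nm}{2}$ (with a bounded number of larger faces to absorb the parity of $nm$), which meets the Euler bound. For $K_m$ the matching embeddings are precisely the Ringel--Youngs solution of the Heawood map-colouring problem: one exhibits current graphs (equivalently, index-one rotation schemes) that generate triangular embeddings with the extremal face count, organised according to the residue of $m$ modulo $12$. I would invoke these constructions rather than reproduce them. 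The nonorientable upper bounds are obtained analogously, via the nonorientable current-graph constructions (again case-split by $m \bmod 12$ for $K_m$) and a parallel quadrilateral construction on $N_k$ for $K_{n,m}$.

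The one genuinely delicate point is the exceptional value $\overline{\gamma}(K_7) = 3$. The Euler inequality above only gives $\overline{\gamma}(K_7) \geq 2$, and it does not rule out an embedding in the Klein bottle: for $K_7$ one has $V = 7$, $E = 21$, and $7 - 21 + F = 2 - 2 = 0$ forces $F = 14$ with $2E = 42 = 3F$, so such an embedding would have to be a triangulation of $N_2$, consistent with Euler's formula. Thus one must prove separately that $K_7$ does \emph{not} triangulate the Klein bottle. The argument I would give assumes such a triangulation, examines the link of a vertex (a $6$-cycle on the remaining six vertices), and traces the forced gluings of the remaining triangles until a contradiction with the topology of $N_2$ emerges; alternatively one can cite the classification of triangulations of the Klein bottle. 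Granting this, $\overline{\gamma}(K_7) \geq 3$, and an explicit drawing of $K_7$ on $N_3$ gives equality. This $K_7$ obstruction is the main obstacle; the rest of the work is bookkeeping — verifying that each current-graph rotation scheme is admissible, that the resulting face counts are as claimed, and that the ceilings in the formulas are attained in every residue class.
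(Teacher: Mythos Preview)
The paper does not prove this proposition at all: it is stated with a citation to White's textbook and used as a black box throughout. So there is no ``paper's own proof'' to compare your attempt against; the authors simply import the result.

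Your sketch is an accurate high-level account of how these classical theorems are actually established. The Euler-formula lower bounds are exactly as you say, and you are right that the matching upper bounds for $K_m$ constitute the Ringel--Youngs solution of the Heawood problem (a case analysis modulo $12$ via current graphs), while the $K_{n,m}$ case is Ringel's quadrilateral-embedding construction. You also correctly isolate $\overline{\gamma}(K_7)=3$ as the one genuinely exceptional point where the Euler bound is not sharp and a separate non-embeddability argument on the Klein bottle is needed. None of this is wrong, but be aware that ``invoking'' the Ringel--Youngs constructions is doing essentially all of the work: those constructions occupy an entire monograph, and no short self-contained proof of part~(i) is known. For the purposes of this paper a citation is the appropriate level of detail, and that is what the authors give.
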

	\vspace{0.2cm}
	
	\begin{lem}\label{glkn}\textnormal{\cite{chi-hsi}}
		For positive integer $m$, we have:
		\vspace{0.2cm}
		\begin{enumerate}[label=\textnormal{(\roman*).}]  
			
			\item $\gamma(L(K_m)) \geq \frac{1}{12}(m-4)(m-3)(m+1),$  attains equality if  $m \equiv 0,3,4$ or  $7\, (mod \;12)$.
			\vskip0.2cm
			\item $\gamma(L(K_{2,m})) \geq \frac{1}{6}(m-3)(m-2) $, attains equality if $m \not\equiv 5$ or $ 9 \,(mod \;12)$.
			\vskip0.2cm
			\item $\gamma(L(K_{1,1,m})) = \gamma(L(K_{2,m+1}))$
		\end{enumerate} 	 
		
	\end{lem}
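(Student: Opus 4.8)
All three parts follow the same scheme: a lower bound for the genus coming from Euler's formula once the shortest possible faces of the relevant line graph are identified, and a matching upper bound coming from an explicit embedding that realises that face structure. I treat the three graphs in turn.

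For (i), I would first record that $L(K_m)$ is the triangular graph: it has $\binom{m}{2}$ vertices, is $2(m-2)$-regular, hence has $E=\tfrac12\binom{m}{2}\cdot 2(m-2)=\tfrac{m(m-1)(m-2)}{2}$ edges, and (for $m\ge 3$) has girth $3$. Combining $V-E+F=2-2\gamma$ with $2E\ge 3F$ gives $\gamma\ge 1-\tfrac{V}{2}+\tfrac{E}{6}$, and substituting and simplifying yields precisely $\tfrac1{12}(m-4)(m-3)(m+1)$, since $12+m(m-1)(m-5)=(m-4)(m-3)(m+1)$. Equality forces $F=\tfrac{2E}{3}$, i.e. a triangular embedding; for $m\equiv 0,3,4,7\pmod{12}$ the value $\tfrac1{12}(m-4)(m-3)(m+1)$ is an integer and such a triangulation can be assembled by the Ringel--Youngs style current-graph / voltage-graph method. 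Producing that triangulation in the right residue classes is the technical heart of (i); everything else is bookkeeping.

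For (ii), the first step is the identification $L(K_{2,m})\cong K_m\,\square\,K_2$: the edges of $K_{2,m}$ at each of the two degree-$m$ vertices form a clique $K_m$, and the $m$ degree-$2$ vertices give a perfect matching joining the two cliques; thus $V=2m$ and $E=m^2$. The point that sharpens the naive Euler bound is that the $m$ matching edges are pairwise non-adjacent, so in any facial walk no two of them are consecutive and none bounds a triangle. Counting the $2m$ ``matching sides'' against the non-triangular faces gives $f_3\le\tfrac{2m^2-4m}{3}$ for the number $f_3$ of triangular faces, and bounding the remaining faces by quadrilaterals gives $F\le\tfrac{2m^2-m}{3}$; Euler's formula then yields $\gamma\ge\tfrac16(m-2)(m-3)$. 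For equality I would construct an embedding in which every matching edge lies on two quadrilateral faces and all other faces are triangles; the divisibility and parity conditions for such a near-triangulation to close up are met exactly when $m\not\equiv 5,9\pmod{12}$, and carrying out this construction is the hard part of (ii).

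For (iii), I would use the structural link between the two graphs: contracting one matching edge of $L(K_{2,m+1})\cong K_{m+1}\,\square\,K_2$ creates a vertex adjacent to all remaining $2m$ vertices, so the contraction is $(K_m\,\square\,K_2)\vee K_1\cong L(K_{1,1,m})$. Since the genus of a graph does not increase when passing to a minor, this gives $\gamma(L(K_{1,1,m}))\le\gamma(L(K_{2,m+1}))$ immediately. For the reverse inequality I would start from a minimum-genus embedding of $L(K_{1,1,m})$ and split its apex vertex $e$ back into the matching pair $x_0',y_0'$, sending the $X$-edges at $e$ to $x_0'$ and the $Y$-edges to $y_0'$; this does not raise the genus provided the rotation at $e$ lists the $X$-neighbours consecutively and the $Y$-neighbours consecutively. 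Establishing that a minimum-genus embedding with this contiguity property can be chosen --- equivalently, that the two genus values coincide for every $m$ and not merely up to an additive constant --- is the main obstacle of (iii); I expect to settle it using the limited flexibility of minimum-genus embeddings of the two large cliques $K_{m+1}$ meeting at $e$ (in the spirit of the rigidity hypothesis in Lemma~\ref{lm1}) or, as is presumably done in the source, by exhibiting the two rotation systems explicitly and comparing.
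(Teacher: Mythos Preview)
This lemma is quoted in the paper from \cite{chi-hsi} without proof, so there is no in-paper argument to compare against; I can only comment on your proposal on its own merits.

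Your Euler-formula lower bounds in (i) and (ii) are correct. In (ii) the refinement you need---that the $m$ matching edges of $K_m\,\square\,K_2$ are pairwise non-adjacent and lie on no triangle, so on any face boundary they alternate with clique edges---indeed gives $3f_3\le 2m^2-4m$ and hence $F\le\frac{2m^2-m}{3}$, exactly as you wrote. The equality statements in (i) and (ii), however, are the entire substance of the result: you have only said ``assemble a triangulation by current/voltage graphs'' and ``construct a near-triangulation that closes up when $m\not\equiv 5,9\pmod{12}$''. That is an accurate description of what must be done, but it is not a proof; in \cite{chi-hsi} these constructions (or reductions to known ones) are carried out explicitly, and nothing in your plan indicates how you would produce them.

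In (iii) your easy direction is fine: since $K_{2,m+1}$ is the subdivision of $K_{1,1,m}$ at the edge $xy$, the line graph $L(K_{1,1,m})$ is obtained from $L(K_{2,m+1})$ by contracting a single edge, and genus is minor-monotone. The reverse inequality is, as you recognise, the real issue, and you have not established it. Your proposed mechanism---splitting the apex $e$ into $x_0',y_0'$ without raising the genus---requires a minimum-genus embedding in which the rotation at $e$ lists the $m$ $X$-edges contiguously and the $m$ $Y$-edges contiguously. There is no general principle guaranteeing such an embedding exists (the ``rigidity'' in Lemma~\ref{lm1} is a strict inequality, not a structural statement about rotations), and appealing to ``exhibiting the two rotation systems explicitly and comparing'' just restates the problem. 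Until you either produce that contiguous rotation or give an independent embedding of $K_{m+1}\,\square\,K_2$ on the surface realising $\gamma(L(K_{1,1,m}))$, part (iii) remains a gap.
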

	\vskip0.2cm
	
	\begin{lem}\label{clkn} \textnormal{\cite{chi-hsi}}
		For positive integer $m$, we have:
		\vskip0.2cm
		\begin{enumerate}[label=\textnormal{(\roman*).}] 
			
			\item $\overline{\gamma}(L(K_m)) \geq \frac{1}{6}(m-4)(m-3)(m+1),$  attains equality if $ m \equiv 0,1,3$ or $4 \,(mod \;6)$ and $m \neq 7.$
			\vskip0.2cm
			
			\item $\overline{\gamma}(L(K_{2,m})) \geq \lceil \frac{1}{3}(m-3)(m-2) \rceil$, attains equality  if $m \neq 6$ and $m \not\equiv 1$ or $ 4 \,(mod \;6).$
			\vskip0.2cm
			
			\item $\overline{\gamma}(L(K_{1,1,m})) = \overline{\gamma}(L(K_{2,m+1}))$
		\end{enumerate} 
	\end{lem}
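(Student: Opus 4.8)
The plan is to obtain each lower bound from Euler's formula --- refining the face count in case (ii) where a bare count is too weak --- and then to match it with an explicit minimal embedding. First I record the three line graphs concretely. The graph $L(K_m)$ is the triangular graph $T(m)$: its vertices are the edges of $K_m$, each of degree $2(m-2)$, so $|V(L(K_m))|=\binom m2$ and $|E(L(K_m))|=\binom m2(m-2)=\tfrac12 m(m-1)(m-2)$. The graph $L(K_{2,m})$ is two disjoint copies of $K_m$ joined by a perfect matching, with $|V|=2m$ and $|E|=m^2$. The graph $L(K_{1,1,m})$ consists of two copies of $K_{m+1}$ sharing a single vertex $e_0$ together with a perfect matching between the two sets of $m$ remaining vertices, with $|V|=2m+1$ and $|E|=m(m+2)$; equivalently, contracting one matching edge of $L(K_{2,m+1})$ yields $L(K_{1,1,m})$.

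For (i), every edge of $L(K_m)$ lies in a triangle, so the bare bound suffices: a simple graph of girth $3$ embedded in $N_k$ satisfies $F\le\tfrac23|E|$, hence $2-k=|V|-|E|+F\le|V|-\tfrac13|E|$ and $\overline\gamma(L(K_m))\ge\tfrac13|E|-|V|+2=\tfrac16\big(m(m-1)(m-5)+12\big)=\tfrac16(m-4)(m-3)(m+1)$; since $(m-4)(m-3)$ is even and $(m-4)(m-3)(m+1)\equiv(m-1)m(m+1)\equiv0\pmod3$, this value is already an integer, so no ceiling is needed. For (ii) the same argument only gives $\overline\gamma\ge\tfrac13m^2-2m+2$, which is too small, so I use that the $m$ matching edges of $L(K_{2,m})$ lie in no triangle: in any embedding each borders only faces of length $\ge4$, and a face of length $\ell$ carries at most $\ell-2$ matching edges. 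Counting the $2m$ incidences yields $2m\le\sum_{\ell(f)\ge4}(\ell(f)-2)=2|E|-2F-f_3$, where $f_3$ is the number of triangular faces; together with $2|E|=\sum_f\ell(f)\ge4F-f_3$ this forces $F\le\tfrac13(2m^2-m)$, and Euler's formula then gives $\overline\gamma(L(K_{2,m}))\ge m^2-2m+2-\tfrac13(2m^2-m)=\tfrac13(m-2)(m-3)$, with the ceiling taken since this need not be integral. For (iii), contracting a matching edge of $L(K_{2,m+1})$ produces $L(K_{1,1,m})$ and a minor never raises the crosscap, so $\overline\gamma(L(K_{1,1,m}))\le\overline\gamma(L(K_{2,m+1}))$; the reverse inequality is not automatic and uses structure, namely that some minimal embedding of $L(K_{1,1,m})$ has a rotation at $e_0$ in which the $m$ edges into each of the two cliques form a consecutive block, so that splitting $e_0$ into two adjacent vertices carrying these blocks recovers $L(K_{2,m+1})$ on the same surface. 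This mirrors the genus identity in Lemma~\ref{glkn}(iii), and the lower bound for $L(K_{1,1,m})$ then follows from that for $L(K_{2,m+1})$.

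For the equality assertions I must produce embeddings attaining these bounds. In (i) equality forces an embedding of $L(K_m)$ all of whose faces are triangles; such non-orientable triangulations of $N_k$ are built with the classical rotation-system and current-graph machinery of topological graph theory, and the residues $m\equiv0,1,3,4\pmod6$ are exactly those for which the divisibility and orientability/parity constraints of an all-triangle embedding are compatible. In (ii) one seeks an embedding whose only non-triangular faces are the $m$ quadrilaterals forced by the matching edges, and a voltage-graph construction supplies this off the excluded residues; via (iii) the corresponding statement for $L(K_{1,1,m})$ is then immediate.

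The technical heart, and the step I expect to be hardest, is the construction of these extremal embeddings and --- above all --- the sporadic exceptions, namely $m=7$ in (i) and $m=6$ in (ii). For $m=7$ the Euler bound only asserts $\overline\gamma(L(K_7))\ge\tfrac16\cdot3\cdot4\cdot8=16$, but, paralleling the classical anomaly $\overline\gamma(K_7)=3$ (rather than $2$) in the work of Ringel and Youngs, $L(K_7)$ admits no triangular embedding, and excluding it requires a finer analysis of the admissible face vectors than a bare count provides. The lower bounds above and the reduction in (iii) are, by comparison, routine.
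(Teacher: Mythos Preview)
The paper does not prove this lemma; it is quoted from \cite{chi-hsi} without argument, so there is no in-paper proof to compare your proposal against.

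Your lower-bound computations in (i) and (ii) are correct. The refined face count in (ii) is the right idea: since the $m$ matching edges of $L(K_{2,m})$ lie in no triangle and no two of them are incident, each non-triangular face of length $\ell$ carries at most $\lfloor\ell/2\rfloor\le\ell-2$ of them, and your two inequalities $2m\le 2|E|-2F-f_3$ and $f_3\ge 4F-2|E|$ combine to give $F\le\tfrac13(2m^2-m)$ and hence the stated bound. The contraction argument for $\overline\gamma(L(K_{1,1,m}))\le\overline\gamma(L(K_{2,m+1}))$ in (iii) is also fine.

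The substantive gap is in the reverse inequality of (iii). You assert that some minimal embedding of $L(K_{1,1,m})$ has a rotation at $e_0$ in which the edges into the two cliques occupy consecutive blocks, so that a vertex split recovers $L(K_{2,m+1})$ on the same surface. But nothing you have written forces any minimal embedding to have this property, and without it the splitting step simply does not go through. This is not a detail to be waved away: it is precisely the content of the equality in (iii), and it requires either a direct construction of embeddings for both graphs achieving the same crosscap number, or a genuine structural argument constraining the rotation at $e_0$. Your treatment of the upper bounds in (i) and (ii) is likewise a gesture toward current-graph and voltage-graph constructions rather than a proof, though you are candid about this and correctly flag the sporadic cases $m=7$ and $m=6$ as the delicate points.
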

	
	The following results are used extensively throughout the paper.
	
	\begin{lem}\label{lm2}  \textnormal{\cite{chi-hsi}}
		For a simple graph $G$ with distinct vertices $v$ and $u$ of degree n and m respectively, the following holds:
		\begin{enumerate}[label=\textnormal{(\roman*).}] 
			
			\item \( \gamma(L(G)) \geq \gamma(K_n) + \gamma(K_m) \).
			
			\item \( \overline{\gamma}(L(G)) \geq \overline{\gamma}(K_n) + \overline{\gamma}(K_m) + \alpha \),  where $\alpha = 	 	\begin{cases} \; 0 & \text{if}\; (n-7)(m-7) \neq 0\\	  -1 & \text{otherwise}. \end{cases}$	
			
		\end{enumerate}
		
	\end{lem}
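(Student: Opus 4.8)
The plan is to exploit the basic structural feature of line graphs: if $w$ is a vertex of $G$ of degree $d$, the $d$ edges of $G$ incident with $w$ are pairwise adjacent in $L(G)$ and hence span a copy of $K_{d}$ in $L(G)$. Applying this to $v$ and to $u$ yields subgraphs $H_{1}\cong K_{n}$ and $H_{2}\cong K_{m}$ of $L(G)$. An edge of $G$ lies in both $H_{1}$ and $H_{2}$ only if it is incident with both $v$ and $u$, i.e.\ only if it is the edge $uv$; thus $H_{1}$ and $H_{2}$ are vertex-disjoint when $uv\notin E(G)$ and meet in the single vertex ``$uv$'' when $uv\in E(G)$. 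In either case $H_{1}\cup H_{2}$ is a subgraph of $L(G)$ which is a disjoint union of $K_{n}$ and $K_{m}$ or else their one-vertex amalgamation, and since $\gamma$ and $\overline{\gamma}$ are monotone under subgraphs it suffices to bound $\gamma(H_{1}\cup H_{2})$ and $\overline{\gamma}(H_{1}\cup H_{2})$ from below.

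For part (i) I would invoke the additivity of the orientable genus over connected components and over one-vertex amalgamations (the Battle--Harary--Kodama--Youngs theorem), which gives $\gamma(H_{1}\cup H_{2})=\gamma(K_{n})+\gamma(K_{m})$ and hence $\gamma(L(G))\geq\gamma(K_{n})+\gamma(K_{m})$. When $n\leq2$ or $m\leq2$ the corresponding clique has no edges or a single edge and contributes $0$, so the inequality is trivial there.

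For part (ii) this argument does not transfer verbatim, because the nonorientable genus is not additive over one-vertex amalgamations; the discrepancy is measured by the Euler genus $\mathrm{eg}$ (with $\mathrm{eg}(S_{g})=2g$, $\mathrm{eg}(N_{k})=k$, and $\mathrm{eg}(\cdot)$ the minimum of $2-\chi$ over all embeddings). Two facts are needed: the additivity $\mathrm{eg}(H_{1}\cup H_{2})=\mathrm{eg}(K_{n})+\mathrm{eg}(K_{m})$ over one-point amalgamations and components (Stahl--Beineke), and the universally valid inequality $\overline{\gamma}(H)\geq\mathrm{eg}(H)$. From Proposition~\ref{genus and crosscap of Kn and Knm} one reads off the dichotomy $\mathrm{eg}(K_{\ell})=\overline{\gamma}(K_{\ell})$ for every $\ell\neq7$, while $\mathrm{eg}(K_{7})=2=\overline{\gamma}(K_{7})-1$ (since $K_{7}$ triangulates the torus although its crosscap number is $3$). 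Combining these, $\overline{\gamma}(L(G))\geq\mathrm{eg}(K_{n})+\mathrm{eg}(K_{m})=\overline{\gamma}(K_{n})+\overline{\gamma}(K_{m})+\alpha$ provided at most one of $n,m$ equals $7$; this disposes of every case with $(n-7)(m-7)\neq0$ (where $\alpha=0$) and also of the cases where exactly one of $n,m$ is $7$ (where $\alpha=-1$).

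The remaining case $n=m=7$ is the main obstacle: bare Euler-genus additivity only gives $\overline{\gamma}(L(G))\geq4$, whereas the claim is $\overline{\gamma}(L(G))\geq5$. I would close this gap by a decomposition argument: if $H_{1}\cup H_{2}$ with $H_{1}\cong H_{2}\cong K_{7}$ embedded in $N_{4}$, then, because an embedding of a one-vertex amalgamation (or of a disconnected graph) in a connected surface can be cut along a suitable separating curve, $N_{4}$ would split as a connected sum $S'\#S''$ with $K_{7}$ embeddable in each of $S'$ and $S''$; additivity of the Euler genus then forces $\mathrm{eg}(S')=\mathrm{eg}(S'')=2$, so each of $S'$ and $S''$ is the torus or the Klein bottle, and since $K_{7}$ does not embed in the Klein bottle both are tori, making $N_{4}=S'\#S''$ orientable --- a contradiction. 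Hence $\overline{\gamma}(H_{1}\cup H_{2})\geq5=\overline{\gamma}(K_{7})+\overline{\gamma}(K_{7})-1$, which completes part (ii). Apart from this ``off by one'' refinement in the $(7,7)$ case, the proof is just the clique structure of line graphs together with standard genus-additivity theorems.
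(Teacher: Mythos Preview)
The paper does not prove this lemma; it is quoted verbatim from Chiang--Hsieh, Lee and Wang \cite{chi-hsi}, so there is no ``paper's own proof'' to compare against. Your argument is a correct reconstruction of the standard proof: the clique structure of $L(G)$ produces $K_{n}$ and $K_{m}$ meeting in at most one vertex, part~(i) then follows from Battle--Harary--Kodama--Youngs additivity, and part~(ii) from Euler--genus additivity together with the fact that $\overline{\gamma}(K_{\ell})=\mathrm{eg}(K_{\ell})$ for all $\ell\neq 7$ while $\overline{\gamma}(K_{7})=\mathrm{eg}(K_{7})+1$.

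Your treatment of the $(7,7)$ case is correct but slightly roundabout. The decomposition ``cut along a separating curve'' is exactly what underlies the Stahl--Beineke theorem, which says directly that a graph is \emph{orientably simple} (i.e.\ $\overline{\gamma}=2\gamma+1$) if and only if all of its blocks are; since $K_{7}$ is orientably simple, so is $H_{1}\cup H_{2}$, whence $\overline{\gamma}(H_{1}\cup H_{2})=2\gamma(H_{1}\cup H_{2})+1=2(1+1)+1=5$. Citing that formulation would let you dispense with the ad~hoc surface-splitting argument, but what you wrote is not wrong --- it is essentially a proof of that special case of Stahl--Beineke.
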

	
	\begin{lem}\label{lm3} \textnormal{\cite{chi-hsi}} 
		For a simple graph $G$ with three distinct vertices $u_1,u_2, u_3$ such that $\deg(u_1) \geq 5$, $\deg(u_2) \geq 5$  and $\deg(u_3) \geq 7,$ we have $\gamma(L(G)) \geq 3$\, and \, $\overline{\gamma}(L(G)) \geq 3$.
		
	\end{lem}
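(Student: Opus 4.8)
The plan is to force the genus and the crosscap number of $L(G)$ up to $3$ by locating inside $L(G)$ a subgraph built from three large cliques that are essentially independent, exploiting the standard fact that the edges of $G$ incident with a fixed vertex $w$ of degree $d$ induce a copy of $K_d$ in $L(G)$. Write $d_i=\deg(u_i)$, so $d_1,d_2\ge 5$ and $d_3\ge 7$, and for $w\in V(G)$ let $Q_w$ be the clique in $L(G)$ on the set of edges of $G$ incident with $w$; thus $Q_{u_1}\cong K_{d_1}$ and $Q_{u_2}\cong K_{d_2}$, each containing $K_5$.

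For the bound $\gamma(L(G))\ge 3$ I would pass to a subgraph whose blocks are three copies of (at least) $K_5$. Because $\deg(u_3)\ge 7$, at most two edges at $u_3$ — namely $u_1u_3$ and $u_2u_3$, when they are present — can also be incident with $u_1$ or $u_2$; let $Q_3$ be the clique in $L(G)$ on the remaining $\ge d_3-2\ge 5$ edges at $u_3$. Then $Q_3\supseteq K_5$, the vertex set of $Q_3$ is disjoint from that of $Q_{u_1}\cup Q_{u_2}$, and $V(Q_{u_1})\cap V(Q_{u_2})\subseteq\{u_1u_2\}$ since $u_1u_2$ is the only edge that can be incident with both $u_1$ and $u_2$. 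Consequently the subgraph $H:=Q_{u_1}\cup Q_{u_2}\cup Q_3$ of $L(G)$ has exactly $Q_{u_1}$, $Q_{u_2}$ and $Q_3$ as its blocks ($Q_3$ is a separate component, and $u_1u_2$, if it is a vertex of $H$, is a cut vertex separating $Q_{u_1}$ from $Q_{u_2}$). By the additivity of the genus over the blocks of a graph (Battle, Harary, Kodama and Youngs; see \cite{twhite}) together with $\gamma(K_5)=1$,
\[
\gamma(L(G))\ \ge\ \gamma(H)\ =\ \gamma(Q_{u_1})+\gamma(Q_{u_2})+\gamma(Q_3)\ \ge\ 3 .
\]

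For the bound $\overline{\gamma}(L(G))\ge 3$ I would invoke Lemma~\ref{lm2}(ii) with the two vertices $u_1$ and $u_3$: it gives $\overline{\gamma}(L(G))\ge\overline{\gamma}(K_{d_1})+\overline{\gamma}(K_{d_3})+\alpha$, where $\alpha=-1$ only when $d_1=7$ or $d_3=7$. Since $d_1\ge 5$ gives $\overline{\gamma}(K_{d_1})\ge\overline{\gamma}(K_5)=1$ and $d_3\ge 7$ gives $\overline{\gamma}(K_{d_3})\ge\overline{\gamma}(K_7)=3$, a short case analysis on $\alpha$ (using also $\overline{\gamma}(K_8)=4$) shows $\overline{\gamma}(L(G))\ge 3$ in every case, the extremal situation being $d_1=5$, $d_3=7$. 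Alternatively one could rerun the block argument of the previous paragraph for $\overline{\gamma}$, replacing the genus by the Euler genus $\varepsilon$, which is additive over blocks, and using $\varepsilon(K_5)=1\le\overline{\gamma}(K_5)$.

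The bookkeeping of which edges of $G$ belong to which clique of $L(G)$ is routine; the one delicate point — and the reason the hypothesis requires $\deg(u_3)\ge 7$ rather than merely $\ge 5$ — is the configuration in which $u_1,u_2,u_3$ span a triangle in $G$, so that the three full cliques $Q_{u_1},Q_{u_2},Q_{u_3}$ are pairwise glued at three distinct vertices and their union is $2$-connected, which would defeat a direct appeal to block-additivity. Discarding the two edges $u_1u_3,u_2u_3$ from $Q_{u_3}$ to form $Q_3$ is precisely what restores the block structure, and it costs nothing exactly because $\deg(u_3)\ge 7$ still leaves a $K_5$ at $u_3$.
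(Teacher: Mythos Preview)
Your argument is correct. The paper itself does not prove this lemma; it simply cites it from \cite{chi-hsi}, so there is no proof in the paper to compare against. What you have written is a clean, self-contained justification.

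A couple of minor remarks. For the genus bound, your block decomposition is exactly right: after stripping the possible edges $u_1u_3$ and $u_2u_3$ from the clique at $u_3$, the three cliques $Q_{u_1},Q_{u_2},Q_3$ meet pairwise in at most one vertex, so Battle--Harary--Kodama--Youngs applies and gives $\gamma(H)\ge 3\cdot\gamma(K_5)=3$. Your closing paragraph correctly identifies why the hypothesis needs $\deg(u_3)\ge 7$ rather than $\ge 5$. For the crosscap bound, your primary route via Lemma~\ref{lm2}(ii) with the pair $(u_1,u_3)$ is the efficient one and the case check is complete; the ``alternative'' block argument you sketch at the end would need the Stahl--Beineke additivity of the \emph{Euler} genus over blocks together with the inequality $\overline{\gamma}\ge\varepsilon$ for each block, which is a slightly longer detour, so it is best left as you have it --- a parenthetical remark rather than the main line.
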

	
	\begin{rem}\label{remark for genus of some  bipartite graphs}\cite{mshariq, chi-hsi}\, The genus and the crosscap number of line graphs of some smaller complete bipartite graphs are as follows; $\gamma(L(K_{2,4})) = 1$,\,
		$\overline{\gamma}(L(K_{2,4})) =2$,
		$\gamma(L(K_{3,4})) = 2$,
		$\overline{\gamma}(L(K_{3,4})) \geq 3$,\, 
		$ \gamma(L(K_{2,5})) =2$,\,
		$\overline{\gamma}(L(K_{2,5})) =2$,\,
		$\gamma(L(K_{3,5})) \geq 3$,\, and \,
		$\overline{\gamma}(L(K_{3,5})) \geq 3$.
		\vskip.2cm\noindent
		We will use the values provided in the above remark in the following sections. 
	\end{rem}

	\section{Planarity and Outerplanarity of the line graph of essential graph}
	In this section, we focus on the characterization of all finite commutative rings $A$ whose essential graphs have planar line graphs.  We obtain a similar characterization for outerplanarity. For local rings, the result follow easily as in this case the essential graph of $A$ is a complete graph. For non-local case, the problem reduces to the case of zero-divisor graphs.

	We begin by studying planarity. The following results characterize the graphs whose line graphs are planar. 
	
	\begin{thm}\label{pl} \textnormal{\cite{green}}            
		A graph \( G \) has a planar line graph \( L(G) \) if and only if \( G \) contains no subdivision of any of the following graphs: \( K_{3,3}, \) \( K_{1,5}, \) \( P_4 \vee K_1,\) \( K_2 \vee \overline{K}_3.\)
		
	\end{thm}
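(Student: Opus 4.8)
I would prove the two implications separately, and route the harder (sufficiency) direction through the classical structural description of graphs with planar line graphs --- Sedl\'a\v{c}ek's criterion, that $L(G)$ is planar if and only if $G$ is planar, $\Delta(G)\le 4$, and every vertex of $G$ of degree $4$ is a cut vertex --- which one may either cite or prove inline (I indicate the inline argument below).

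For necessity I argue contrapositively: if $G$ contains a subdivision $S$ of one of $H\in\{K_{3,3},\,K_{1,5},\,P_4\vee K_1,\,K_2\vee\overline{K}_3\}$, then $L(G)$ is non-planar. Since $L$ is monotone under subgraphs we have $L(S)\subseteq L(G)$, and $L(H)$ is a minor of $L(S)$: the edges of $S$ split into bundles indexed by the edges of $H$, each bundle inducing a path in $L(S)$, and contracting every such path returns exactly $L(H)$. As planarity is minor-closed, it then suffices to verify that $L(K_{3,3})$, $L(K_{1,5})=K_5$, $L(P_4\vee K_1)$ and $L(K_2\vee\overline{K}_3)$ are non-planar; this is a finite check, the only slightly interesting cases being $L(K_{3,3})$ (the $3\times 3$ rook's graph, in which two ``diagonal'' triples of vertices serve as the branch vertices of a $K_{3,3}$-subdivision) and the two seven-vertex graphs $L(P_4\vee K_1)$ and $L(K_2\vee\overline{K}_3)$, each of which is readily checked to contain a $K_5$-subdivision (take the $K_4$ on the four edges at an apex vertex and join a fifth edge to all of them).

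For sufficiency I assume $G$ has no subdivision of the four graphs and deduce the three conditions of Sedl\'a\v{c}ek's criterion. First, $\Delta(G)\le 4$, since in a simple graph a vertex of degree at least $5$ spans a copy of $K_{1,5}$. Second, $G$ is planar: otherwise Kuratowski's theorem yields a subdivision of $K_{3,3}$ (excluded outright) or of $K_5$, and since $K_5$ contains $P_4\vee K_1$ as a subgraph (take any vertex as apex and a Hamilton path on the remaining four), any $K_5$-subdivision contains a $P_4\vee K_1$-subdivision. Third, every vertex $v$ of degree $4$ is a cut vertex: if not, $G-v$ is connected, so it contains a minimal connected subgraph $T$ meeting $N(v)=\{a,b,c,d\}$, and by minimality $T$ is a tree whose leaves lie in $\{a,b,c,d\}$ --- hence a path, a three-leg spider, a four-leg spider, or a tree with two branch vertices of degree $3$. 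In each of these cases one assembles, from $T$ together with the edges $va,vb,vc,vd$, a subdivision of $P_4\vee K_1$ or of $K_2\vee\overline{K}_3$: the branches of $T$ supply the path (resp.\ the two stars at the hubs), while the edges at $v$ serve as spokes to the branch vertices and, where needed, to tunnel $v$ into the interior of $T$ along a leg. This contradicts the hypothesis, so $v$ is a cut vertex after all. By Sedl\'a\v{c}ek's criterion, the three conditions just established imply that $L(G)$ is planar.

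The step I expect to be the main obstacle is twofold. One part is the third condition of the sufficiency argument: the finite but somewhat delicate case analysis on the shape of $T$, establishing that a degree-$4$ vertex that is not a cut vertex must create a subdivision of $P_4\vee K_1$ or of $K_2\vee\overline{K}_3$. The other part, if one does not cite Sedl\'a\v{c}ek's criterion, is its sufficiency half: constructing a plane drawing of $L(G)$ from one of $G$ by drawing, around each vertex $u$ of degree $k\le 4$, the planar graph $K_k$ inside the rotation at $u$, where the cut-vertex hypothesis is precisely what allows the $K_4$'s at degree-$4$ vertices to be laid out so that the continuations of their edges route into separate regions without a crossing. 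I would also record the sanity check that $P_4\vee K_1$ and $K_2\vee\overline{K}_3$ are themselves planar, so that these two clauses are genuinely additional to Kuratowski's pair and the theorem does not reduce to the planarity of $G$ alone.
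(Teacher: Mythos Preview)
The paper does not prove this theorem; it is stated with a citation to Greenwell and Hemminger~\cite{green} and used as a black box throughout Section~3. So there is no ``paper's proof'' to compare against --- the authors simply import the result.

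Your proposal is a reasonable reconstruction of a proof, routing through Sedl\'a\v{c}ek's criterion. The necessity direction is clean. For sufficiency, your deductions that $\Delta(G)\le 4$ and that $G$ is planar are correct and standard. The third step --- that a degree-$4$ non-cut-vertex forces a subdivision of $P_4\vee K_1$ or $K_2\vee\overline{K}_3$ --- is where the real content lies, and you have correctly identified this as the main obstacle. Your sketch (classify the minimal tree $T$ in $G-v$ spanning $N(v)$ by its shape and build the forbidden subdivision in each case) is the right idea, but the phrase ``where needed, to tunnel $v$ into the interior of $T$ along a leg'' hides a genuine case check that would need to be written out: for instance, when $T$ is a path with all four neighbours as interior or endpoint vertices in various orders, one must verify in each arrangement which of the two forbidden configurations arises. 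This is not deep but it is where a careless write-up could leave a gap. If you intend to cite Sedl\'a\v{c}ek rather than prove it inline, the argument is complete modulo that case analysis; if not, the construction of the planar drawing of $L(G)$ around degree-$4$ cut vertices is another place requiring care, as you note.
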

	
	
	Recall that for a given ring $A,$  $EG(A)$  denotes the essential graph of $A.$ So, $L(EG(A))$ denotes the line graph of $EG(A).$ 
	\begin{thm}\label{pegl}
		Let $A$ be a finite local commutative ring, then the line graph of the essential graph of $A$ is planar 
		if and only if $A$ is isomorphic to one of the following 13 rings: 
		\(\mathbb Z_4, \hspace{.2cm}
		\mathbb Z_8,\hspace{.2cm}
		\mathbb Z_9,\hspace{.2cm}
		\mathbb Z_{25},\hspace{.2cm}
		\frac{\mathbb Z_2[x]}{\langle x^2 \rangle},\hspace{.2cm}
		\frac{Z_3[x]}{\langle x^2 \rangle},\hspace{.2cm} 		 
		\frac{\mathbb Z_2[x]}{\langle x^3 \rangle},\hspace{.2cm}		
		\frac{Z_5[x]}{\langle x^2 \rangle}, \hspace{.2cm}
		\frac{\mathbb Z_4[x]}{\langle x^3, x^2- 2 \rangle },\hspace{.2cm} 
		\frac{\mathbb F_3[x]}{ \langle x^2 \rangle},\hspace{.2cm}
		\frac{\mathbb Z_4[x]}{\langle 2x, x^2 \rangle}, \hspace{.2cm} \frac{\mathbb Z_2[x,y]}{\langle x^2, xy , y^2 \rangle},\hspace{.2cm} \frac{\mathbb Z_4[x]}{\langle x^2 + x + 1 \rangle}.\)
	\end{thm}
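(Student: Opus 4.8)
The plan is to reduce the problem to a finite check using Corollary~\ref{fcg}, which tells us that for a local ring $A$ the essential graph $EG(A)$ is a complete graph $K_m$, where $m = |Z(A)^*|$. Thus $L(EG(A)) = L(K_m)$, and by Theorem~\ref{pl} (or directly) we know $L(K_m)$ is planar precisely for small values of $m$. The first step is therefore to determine exactly which $L(K_m)$ are planar: one checks that $L(K_3) = K_3$, $L(K_4) = K_{2,2,2}$ (the octahedron) are planar, $L(K_5)$ contains a $K_5$ or $K_{3,3}$ subdivision (e.g. via the triangular line graph structure), and for $m \geq 5$ the graph $L(K_m)$ fails planarity since each vertex of $K_m$ of degree $m-1 \geq 4$ contributes a $K_{m-1}$ in the line graph and these overlap; concretely $L(K_5)$ is nonplanar, hence $L(K_m)$ is nonplanar for all $m \geq 5$. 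So $L(EG(A))$ is planar if and only if $m = |Z(A)^*| \leq 4$.

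The second step is to read off from Table~\ref{table of local rings} all local rings with $|Z(A)^*| \leq 4$. Since no local ring has exactly five nonzero zero-divisors, the relevant rows are those with $|Z(A)^*| \in \{1,2,3,4\}$ (the case $|Z(A)^*|=0$ being excluded, as then $A$ is a field with no nonzero zero-divisors and $EG(A)$ is empty, so the question is vacuous — one should note this). Counting the rings in those four rows of the table: there are $2$ rings with $|Z(A)^*| = 1$ ($\mathbb{Z}_4$ and $\frac{\mathbb{Z}_2[x]}{\langle x^2\rangle}$), $2$ rings with $|Z(A)^*| = 2$ ($\mathbb{Z}_9$ and $\frac{\mathbb{Z}_3[x]}{\langle x^2\rangle}$), $7$ rings with $|Z(A)^*| = 3$ ($\mathbb{Z}_8$, $\frac{\mathbb{Z}_2[x]}{\langle x^3\rangle}$, $\frac{\mathbb{Z}_4[x]}{\langle x^3,x^2-2\rangle}$, $\frac{\mathbb{Z}_4[x]}{\langle 2x,x^2\rangle}$, $\frac{\mathbb{F}_4[x]}{\langle x^2\rangle}$, $\frac{\mathbb{Z}_4[x]}{\langle x^2+x+1\rangle}$, $\frac{\mathbb{Z}_2[x,y]}{\langle x^2,xy,y^2\rangle}$), and $2$ rings with $|Z(A)^*| = 4$ ($\mathbb{Z}_{25}$ and $\frac{\mathbb{Z}_5[x]}{\langle x^2\rangle}$), for a total of $13$ rings — matching the claimed list. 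One subtlety is that $\frac{\mathbb{F}_4[x]}{\langle x^2\rangle}$ appears in the table's $K_3$ row but the theorem's list writes $\frac{\mathbb{F}_3[x]}{\langle x^2\rangle}$; I would double-check which of $\mathbb{F}_3$ or $\mathbb{F}_4$ is intended (with $\mathbb{F}_4$, $|Z(A)^*| = 3$ since the maximal ideal is $x\mathbb{F}_4$ with $4-1 = 3$ nonzero elements; with $\mathbb{F}_3$ one gets $|Z(A)^*| = 2$), and reconcile the statement with the table accordingly.

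The main obstacle — really the only nontrivial point — is establishing cleanly that $L(K_m)$ is nonplanar for all $m \geq 5$; everything else is bookkeeping against Table~\ref{table of local rings}. For this I would either invoke Theorem~\ref{pl} directly (noting that $K_5 \supseteq K_{1,5}$ once we... actually $K_5$ does not contain $K_{1,5}$, so instead note $K_m$ for $m \geq 5$ contains $K_2 \vee \overline{K}_3 = K_{3,3}$ minus... ) — more carefully, $K_5$ contains $K_2 \vee \overline{K}_3$ as a subgraph (take two vertices joined to each other and to three others; all edges present in $K_5$), so by Theorem~\ref{pl}, $L(K_5)$ is nonplanar, and since $K_5 \subseteq K_m$ for $m \geq 5$, $L(K_m) \supseteq L(K_5)$ is nonplanar too. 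Combined with the explicit verification that $L(K_3)$ and $L(K_4)$ are planar, this pins down $m \leq 4$ and completes the proof.
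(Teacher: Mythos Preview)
Your proposal is correct and follows essentially the same route as the paper: reduce via Corollary~\ref{fcg} to $EG(A)\cong K_m$ with $m=|Z(A)^*|$, invoke Theorem~\ref{pl} to rule out $m\ge 5$, and then read off the rings with $m\le 4$ from Table~\ref{table of local rings}. The only cosmetic difference is that the paper picks the forbidden subgraph $P_4\vee K_1\subseteq K_5$ from Theorem~\ref{pl}, whereas you (after some back-and-forth) settle on $K_2\vee\overline{K}_3\subseteq K_5$; either choice works. Your observation about the $\mathbb{F}_3[x]/\langle x^2\rangle$ versus $\mathbb{F}_4[x]/\langle x^2\rangle$ discrepancy between the theorem statement and Table~\ref{table of local rings} is a genuine typo in the paper worth flagging.
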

	
	\begin{proof}
		
		Since \( A \) is a finite local ring, its essential graph \( EG(A) \) forms a complete graph on \( n = |Z(A)^*| \) vertices, by Corollary~\ref{fcg}. When \( n \geq 5 \), the line graph \( L(EG(A)) \) contains \( P_4 \vee K_1 \) as a subgraph, which, by Theorem~\ref{pl}, guarantees that \( L(EG(A)) \) is non-planar. For \(n\leq 4 \), examining Table~\ref{table of local rings} shows that \( A \) must correspond to one of the thirteen rings listed in the theorem.
		
	\end{proof}
	
	We now consider the problem of the characterization of non-local rings whose essential graphs have planar line graphs.  Such characterization of rings is established by Chiang-Hsieh \cite{chi-hsi} for the planar line graphs of zero-divisor graphs.  
	
	\begin{lem}\label{pzg} \textnormal{\cite{chi-hsi}}
		Given a  finite non-local commutative ring $A,$ the line graph $L(\Gamma(A))$ of the zero-divisor graph of $A$ is planar if and only if either of the following holds: 
		\begin{enumerate}[label=\textnormal{(\roman*).}]
			\item $A$ is a reduced ring and isomorphic to one of the following $7$ rings: \( A \cong \mathbb{Z}_2 \times \mathbb{Z}_2 \),  
			\( \mathbb{Z}_2 \times \mathbb{Z}_3 \),  
			\( \mathbb{Z}_3 \times \mathbb{Z}_3 \),
			\( \mathbb{Z}_2 \times \mathbb{F}_4 \),  
			\( \mathbb{Z}_3 \times \mathbb{F}_4 \),
			\( \mathbb{Z}_2 \times \mathbb{Z}_5 \),   
			\( \mathbb Z_2 \times \mathbb Z_2 \times \mathbb Z_2.\)
			
			\item $A$ is a non-reduced ring and isomorphic to one of the following $4$ rings: $\mathbb Z_2\times \mathbb Z_4, \hspace{.2cm}
			\mathbb Z_3\times \mathbb Z_4,\hspace{.2cm}
			\mathbb Z_2 \times  \frac{\mathbb Z_2[x]}{\langle x^2 \rangle},\hspace{.2cm}  
			\mathbb Z_3\times \frac{\mathbb Z_2[x]}{\langle x^2 \rangle}.$
			
		\end{enumerate}
		
	\end{lem}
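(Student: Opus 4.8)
The proof will run entirely through Green's criterion, Theorem~\ref{pl}: $L(\Gamma(A))$ is planar if and only if $\Gamma(A)$ contains no subdivision of $K_{3,3}$, $K_{1,5}$, $P_4\vee K_1$, or $K_2\vee\overline{K}_3$. Since $A$ is finite and non-local, the Artinian structure theorem writes $A\cong A_1\times\cdots\times A_k$ with $k\geq2$ and each $A_i$ a finite local ring. The strategy is first to bound $k$ and the orders $|A_i|$ so that only a short list of rings can possibly work, and then to settle that list either by producing one of the four forbidden subdivisions inside $\Gamma(A)$ or by drawing $L(\Gamma(A))$ in the plane.

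For the bounds I would exploit two kinds of vertices. The vertex $u=(1,\dots,1,0)$ (last coordinate $0$, the others units) has $\mathrm{ann}_A(u)=0\times\cdots\times0\times A_k$, hence is adjacent to all $|A_k|-1$ vertices $(0,\dots,0,b)$, so $\deg_{\Gamma(A)}(u)\geq|A_k|-1$; and, when $k\geq3$, the vertex $w=(1,0,\dots,0)$ has $\mathrm{ann}_A(w)=0\times A_2\times\cdots\times A_k$, so $\deg_{\Gamma(A)}(w)\geq|A_2|\cdots|A_k|-1$. A vertex of degree $\geq5$ gives a $K_{1,5}$ subgraph, which by Theorem~\ref{pl} already makes $L(\Gamma(A))$ non-planar. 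Applying this to $u$ and its coordinate analogues forces $|A_i|\leq5$ for every $i$; applying it to $w$ forces $|A_i||A_j|\leq5$ for all $i\neq j$ when $k=3$, and is violated outright when $k\geq4$. Since $|A_i|\geq2$, the condition $|A_i||A_j|\leq5$ with $k=3$ forces every factor to be $\mathbb{Z}_2$, i.e.\ $A\cong\mathbb{Z}_2\times\mathbb{Z}_2\times\mathbb{Z}_2$, and a direct check shows $\Gamma(\mathbb{Z}_2^3)$ is a triangle with a pendant edge attached at each vertex, whose line graph is planar. Finally, if $k=2$ and some factor $A_1$ is non-reduced, then $A_1\in\{\mathbb{Z}_4,\mathbb{Z}_2[x]/\langle x^2\rangle\}$ (the only non-reduced local rings of order $\leq5$); its nonzero nilpotent $t$ satisfies $\mathrm{ann}_{A_1}(t)=\{0,t\}$, so $(t,0)$ is adjacent to all $(t,b)$ and all $(0,b)$ with $b\in A_2^*$, giving $\deg_{\Gamma(A)}((t,0))=2(|A_2|-1)$; hence $|A_2|\leq3$, i.e.\ $A_2\in\{\mathbb{Z}_2,\mathbb{Z}_3\}$, and in particular exactly one factor is non-reduced.

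It then remains to treat the finitely many surviving $k=2$ rings. If $A=F_1\times F_2$ is reduced then $\Gamma(A)\cong K_{|F_1|-1,\,|F_2|-1}$ by Lemmas~\ref{complete bipartite} and~\ref{Zero divisor graph = essential graph}, with $|F_i|\in\{2,3,4,5\}$; here $L(K_{1,n})=K_n$ for $n\le 4$, $L(K_{2,2})=C_4$, and $L(K_{2,3})$ is the triangular prism, all planar, whereas $K_{3,3}$ is itself forbidden, $K_{4,3}$ and $K_{4,4}$ contain $K_{3,3}$, and $K_{2,4}$ contains a subdivision of $K_2\vee\overline{K}_3$ (one degree-$4$ vertex of $K_{2,4}$ joins to three of the small side, the other to the same three, and the fourth small vertex subdivides the missing edge between the two big ones; equivalently $\gamma(L(K_{2,4}))=1$ by Remark~\ref{remark for genus of some  bipartite graphs}); this leaves exactly the six reduced rings of item~(i) having two factors. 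If $A=A_1\times A_2$ is non-reduced with $A_1\in\{\mathbb{Z}_4,\mathbb{Z}_2[x]/\langle x^2\rangle\}$ and $A_2\in\{\mathbb{Z}_2,\mathbb{Z}_3\}$, I would write $\Gamma(A)$ out coordinatewise: for $A_2=\mathbb{Z}_2$ it is a spider on $4$ edges, and for $A_2=\mathbb{Z}_3$ it is a graph on $7$ vertices and $8$ edges with a unique vertex of degree $4$, two vertices of degree $3$, and two pendant vertices. In each case a short inspection rules out all four forbidden subdivisions, so $L(\Gamma(A))$ is planar; these are the four rings of item~(ii).

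The main obstacle is this last verification — certifying the \emph{absence} of subdivisions of $K_{3,3}$, $K_{1,5}$, $P_4\vee K_1$ and $K_2\vee\overline{K}_3$ in the surviving graphs. In the reduced case this is transparent since the graphs are complete bipartite and the relevant small line graphs are classical, but in the non-reduced case the cleanest argument is a counting one: a subdivision of $K_{3,3}$ needs six branch vertices of degree $\geq3$, and subdivisions of $P_4\vee K_1$ or $K_2\vee\overline{K}_3$ need a branch vertex of degree $\geq4$ together with several more of degree $\geq3$, joined by internally disjoint paths, while $\Gamma(A_1\times A_2)$ has at most one vertex of degree $\geq4$, only two of degree $3$, and its pendant vertices cannot serve as the required path endpoints. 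A secondary point to watch is using the degree inequalities with precisely the right threshold ($|A_i||A_j|\le 5$, not merely $<6$), so that the all-$\mathbb{Z}_2$ triple product survives the argument while products such as $\mathbb{Z}_2\times\mathbb{Z}_2\times\mathbb{Z}_3$ and $\mathbb{Z}_4\times\mathbb{Z}_5$ are correctly excluded.
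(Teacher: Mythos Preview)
The paper does not prove Lemma~\ref{pzg}; it is quoted from \cite{chi-hsi} and used as a black box (for instance in Theorems~\ref{pegnl} and~\ref{outerplanarity of zero-divisor graph}). So there is no proof in the paper against which to compare your proposal.

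That said, your argument is a correct self-contained reconstruction of the cited result. The $K_{1,5}$ degree bounds cleanly force $k\le3$, pin down $\mathbb{Z}_2^3$ when $k=3$, cap $|A_i|\le5$ when $k=2$, and (via $\deg(t,0)=2(|A_2|-1)$) force $|A_2|\le3$ once a factor is non-reduced. Your handling of the finite residue list is accurate: in particular, the subdivision of $K_2\vee\overline{K}_3$ inside $K_{2,4}$ (using the fourth small-side vertex to subdivide the missing edge) is exactly right, and your degree-counting elimination for the non-reduced cases is sound. If anything, the last paragraph undersells how quickly those cases close: once you note that a subdivision of $P_4\vee K_1$ or $K_2\vee\overline{K}_3$ needs one (respectively two) branch vertices of degree $\ge4$, and $K_{3,3}$ needs six of degree $\ge3$, the graphs $\Gamma(\mathbb{Z}_4\times\mathbb{Z}_2)$ (max degree $3$) and $\Gamma(\mathbb{Z}_4\times\mathbb{Z}_3)$ (a single degree-$4$ vertex, two of degree $3$) are immediately cleared.
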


	\begin{thm}\label{pegnl}
		Let $A$ be a finite non-local commutative ring. Then, the line graph of the essential graph of $A$ is planar if and only if $A$ is isomorphic to one of the rings: \(\mathbb{Z}_2 \times \mathbb{Z}_2 \),  
		\( \mathbb{Z}_2 \times \mathbb{Z}_3 \),  
		\( \mathbb{Z}_3 \times \mathbb{Z}_3 \),
		\( \mathbb{Z}_2 \times \mathbb{F}_4 \),  
		\( \mathbb{Z}_3 \times \mathbb{F}_4 \),
		\( \mathbb{Z}_2 \times \mathbb{Z}_5 \),   
		\( \mathbb Z_2 \times \mathbb Z_2 \times \mathbb Z_2.\)
	\end{thm}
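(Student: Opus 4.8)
The plan is to split the analysis into the reduced and non-reduced cases, exploiting Lemma~\ref{Zero divisor graph = essential graph} in the former and reducing the latter to a short finite check. First I would handle the reduced case: by Lemma~\ref{Zero divisor graph = essential graph}, if $A$ is reduced then $EG(A)=\Gamma(A)$, so $L(EG(A))=L(\Gamma(A))$, and Lemma~\ref{pzg}(i) immediately forces $A$ to be one of the seven reduced rings listed, all of which do occur. The substance of the argument is therefore the non-reduced case, where I must show that \emph{no} finite non-reduced non-local ring survives, in contrast to the four non-reduced rings allowed in Lemma~\ref{pzg}(ii) for the zero-divisor graph.

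For the non-reduced non-local case, write $A \cong A_1 \times \cdots \times A_t$ as a product of local rings, with at least one factor, say $A_1$, non-reduced (so $A_1$ has a non-zero nilpotent, hence $|Z(A_1)^*|\ge 1$ and $|A_1|\ge 4$). The key structural input is Lemma~\ref{cg}: every non-zero nilpotent element of $A$ is adjacent in $EG(A)$ to \emph{every} other vertex. I would first dispose of the four candidate rings from Lemma~\ref{pzg}(ii) one at a time — $\mathbb Z_2\times\mathbb Z_4$, $\mathbb Z_3\times\mathbb Z_4$, $\mathbb Z_2\times\frac{\mathbb Z_2[x]}{\langle x^2\rangle}$, $\mathbb Z_3\times\frac{\mathbb Z_2[x]}{\langle x^2\rangle}$ — by exhibiting in each $EG(A)$ a subdivision of one of the four Kuratowski-type obstructions $K_{3,3},K_{1,5},P_4\vee K_1,K_2\vee\overline K_3$ from Theorem~\ref{pl}; since $\Gamma(A)\subseteq EG(A)$ and the extra edges come from the nilpotent element being adjacent to everything, each of these graphs picks up enough additional edges to create, e.g., a $P_4\vee K_1$ or $K_2\vee\overline K_3$. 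Concretely, in $\mathbb Z_2\times\mathbb Z_4$ the element $(0,2)$ is the non-zero nilpotent and is joined to all five other vertices, and one checks directly that $EG(\mathbb Z_2\times\mathbb Z_4)$ contains $K_2\vee\overline K_3$; analogous small computations handle the other three.

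It then remains to argue that any non-reduced non-local $A$ not on that list of four already fails for $\Gamma(A)$, hence a fortiori for $EG(A)\supseteq\Gamma(A)$ (using that $L(H)$ is a subgraph of $L(G)$ whenever $H\subseteq G$, together with the subgraph-closed nature of planarity): by Lemma~\ref{pzg}, if $A$ is non-reduced non-local and $L(\Gamma(A))$ is planar then $A$ is one of those four, and we have just ruled all four out. Combining the two cases gives exactly the seven reduced rings in the statement, and Lemma~\ref{pzg}(i) (equivalently, direct inspection of the small graphs $K_{1,1},K_{1,2},K_{2,2},K_{1,3},K_{3,3}^{-},\dots$ arising as these zero-divisor graphs) confirms each of the seven genuinely has planar line graph, completing the characterization. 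The main obstacle is the non-reduced elimination step: one must be careful that the \emph{additional} adjacencies forced by Lemma~\ref{cg} are correctly incorporated when locating a forbidden subdivision, since $EG(A)$ is strictly larger than $\Gamma(A)$ here and a naive appeal to Lemma~\ref{pzg} alone does not suffice for those four rings — they must be killed by hand via Theorem~\ref{pl}.
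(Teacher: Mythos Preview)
Your approach is correct and is a genuinely different route from the paper's. The paper handles the non-reduced case in one stroke: assuming some local factor $A_1$ is not a field, it fixes $a\in\mathfrak m_1^*$ with $\mathrm{ann}_{A_1}(a)=\mathfrak m_1$ and writes down five explicit vertices $s_1=(1,0,\dots,0)$, $s_2=(u,0,\dots,0)$, $s_3=(0,1,0,\dots,0)$, $s_4=(a,1,0,\dots,0)$, $r_1=(a,0,\dots,0)$ which induce $P_4\vee K_1$ inside $EG(A)$; Theorem~\ref{pl} then eliminates every non-reduced non-local ring simultaneously, and the reduced case is finished exactly as you do via Lemma~\ref{pzg}(i). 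Your argument instead bootstraps from Lemma~\ref{pzg}: the inclusion $\Gamma(A)\subseteq EG(A)$ gives $L(\Gamma(A))\subseteq L(EG(A))$, so any non-reduced non-local $A$ outside the four rings of Lemma~\ref{pzg}(ii) is already dead, and only those four need to be killed by hand. The paper's version is cleaner and case-free; yours is more modular and makes transparent exactly where the essential graph diverges from the zero-divisor graph.

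One small correction in your hand-check: $EG(\mathbb Z_2\times\mathbb Z_4)$ has five vertices (so $(0,2)$ is joined to four others, not five), and only $(0,2)$ has degree $4$ --- the graph is $K_1\vee C_4$. It therefore does \emph{not} contain $K_2\vee\overline K_3$ as a subgraph. It does, however, contain $P_4\vee K_1$ (take $(0,2)$ as the apex and delete any edge of the $C_4$ on the remaining four vertices), so Theorem~\ref{pl} still applies and your elimination of this ring is fine once you cite the correct obstruction. The same $P_4\vee K_1$ works uniformly for the other three rings as well, which is of course exactly the paper's generic construction specialised to those cases.
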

	
	\begin{proof}
		Since \( A \) is a finite ring that is not local, it can be expressed as a direct product \( A \cong A_1 \times A_2 \times \dots \times A_n \), where each \( A_i \) is a finite local ring. Note that $ n \geq 2.$ Let \( \mathfrak{m}_i \) denote the unique maximal ideal of \( A_i \), for each \( i= 1,2, \dots,n \). Let us first assume that the graph \( L(EG(A)) \) is planar. Suppose \(n\geq 2 \) and at least one of the components \( A_i \) is not a field. We may assume that \( A_1 \) is not a field, which ensures that \( \mathfrak{m}_1 \neq \{ 0 \} \). Then, there exists a nonzero element \( a \in \mathfrak{m}_1 \) such that \( \text{ann}_{A_1}(a) = \mathfrak{m}_1 \), which further implies \( a^2 = 0 \).
		Consider a set of vertices \( S = \{ s_1, s_2, s_3, s_4, r_1\} \), where  
		\( s_1 =(1,0, \dots, 0) \),  
		\( s_2 = (u,0,\dots,0) \), 
		\( s_3 = (0,1,0, \dots,0) \),  		  
		\( s_4 = (a,1,0,\dots,0) \),  
		\( r_1= (a,0,\dots,0) \),  
		with \( u \in A_1^\times \setminus \{1\}\). It follows that \( s_3 s_i = 0 = r_1 s_j \) for \( i \in \{1,2\} \) and \( j \in \{ 3,4\} \). Since \( \text{ann}_A(0) = A \), which is always essential, the annihilators \( \text{ann}_A(s_3 s_i) \) and \( \text{ann}_A(r_1 s_j) \) are essential ideals in \( A \) for the respective values of \( i \) and \( j \). Moreover, for \( k = 1,2 \), we observe that \( \text{ann}_A(s_4 s_k) = \text{ann}_A(r_1 s_k) = \mathfrak{m}_1 \times A_2 \times \dots \times A_n,
		\) which is also an essential ideal of \( A \). Thus, the subgraph induced by the vertices \( s_1, s_3, s_2, s_4 \) contains a path isomorphic to \( P_4 \). Since \( r_1 \) is adjacent to each of these vertices, the induced subgraph on the set \( S \) contains a subgraph isomorphic to \( P_4 \vee K_1 \), thereby contradicting the assumption of planarity by Theorem~\ref{pl}. Therefore, each \( A_i \) must be a field, and hence \( A \) is reduced. By Lemma~\ref{Zero divisor graph = essential graph}, it follows that \( EG(A) = \Gamma(A) \). Then, by applying Lemma~\ref{pzg}, we conclude that \( A \) is isomorphic to one of the rings listed in the statement of the theorem.
		\\
		\noindent Conversely, if \( A \cong \mathbb{Z}_2 \times \mathbb{Z}_2 \),  
		\( \mathbb{Z}_2 \times \mathbb{Z}_3 \),  
		\( \mathbb{Z}_3 \times \mathbb{Z}_3 \),
		\( \mathbb{Z}_2 \times \mathbb{F}_4 \),  
		\( \mathbb{Z}_3 \times \mathbb{F}_4 \),
		\( \mathbb{Z}_2 \times \mathbb{Z}_5 \),   
		\( \mathbb Z_2 \times \mathbb Z_2 \times \mathbb Z_2\), then \( A \) is reduced, and by Lemma~\ref{Zero divisor graph = essential graph}, it follows that \( EG(A) = \Gamma(A) \). Hence, by Lemma~\ref{pzg}(i), the line graph \( L(EG(A)) \) is planar.

		
	\end{proof}
	
	\begin{cor}\label{planatiry of essential grh for nonreduced}
		If \( A \) is a non-reduced ring, then \( L(EG(A)) \) is never planar.
		
	\end{cor}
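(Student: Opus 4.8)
The plan is to read this off directly from Theorem~\ref{pegnl}. Every ring appearing in the conclusion of that theorem --- namely $\mathbb{Z}_2\times\mathbb{Z}_2$, $\mathbb{Z}_2\times\mathbb{Z}_3$, $\mathbb{Z}_3\times\mathbb{Z}_3$, $\mathbb{Z}_2\times\mathbb{F}_4$, $\mathbb{Z}_3\times\mathbb{F}_4$, $\mathbb{Z}_2\times\mathbb{Z}_5$, and $\mathbb{Z}_2\times\mathbb{Z}_2\times\mathbb{Z}_2$ --- is a finite direct product of fields and hence is reduced. So a non-reduced ring is not isomorphic to any ring on that list, and Theorem~\ref{pegnl} then forces $L(EG(A))$ to be non-planar. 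In other words, the corollary is the contrapositive of Theorem~\ref{pegnl} combined with the elementary observation that the surviving rings are all reduced.

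Alternatively, one can give a direct argument re-using only the first half of the proof of Theorem~\ref{pegnl}, which is the step carrying all the content. Write $A \cong A_1\times\cdots\times A_n$ as a product of finite local rings with $n\geq 2$. Since $A$ is non-reduced, some factor --- say $A_1$ --- fails to be a field, so $\mathfrak{m}_1\neq\{0\}$; choosing $0\neq a\in\mathfrak{m}_1$ with $\operatorname{ann}_{A_1}(a)=\mathfrak{m}_1$ gives $a^2=0$. A finite local ring that is not a field has at least four elements, so $A_1^\times$ contains some $u\neq 1$. The five vertices $s_1=(1,0,\dots,0)$, $s_2=(u,0,\dots,0)$, $s_3=(0,1,0,\dots,0)$, $s_4=(a,1,0,\dots,0)$, $r_1=(a,0,\dots,0)$ of $EG(A)$ then induce a subgraph containing $P_4\vee K_1$, exactly as computed in the proof of Theorem~\ref{pegnl}: $s_1,s_3,s_2,s_4$ form a $P_4$ because the relevant annihilators ($A$ itself, or $\mathfrak{m}_1\times A_2\times\cdots\times A_n$) are essential, and $r_1$ is adjacent to all four for the same reason. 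By Theorem~\ref{pl}, $L(EG(A))$ is not planar.

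I do not expect any real obstacle here, since all the combinatorial work is already done in Theorem~\ref{pegnl}; the corollary is essentially bookkeeping about which rings survive that classification. The only place where a morsel of ring theory intervenes is the production of the unit $u\in A_1^\times\setminus\{1\}$ and of the square-zero element $a\in\mathfrak{m}_1^{\,*}$, and both are routine for a finite local ring that is not a field (the maximal ideal of an Artinian local ring is nilpotent, so its last nonzero power lies in the annihilator of a suitable nonzero element).
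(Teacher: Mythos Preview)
Your proposal is correct and matches the paper's treatment: the corollary is stated without proof because it is immediate from Theorem~\ref{pegnl}, all seven rings on that list being products of fields and hence reduced. Your alternative direct argument simply re-runs the non-field-factor step from the proof of Theorem~\ref{pegnl}, so it is redundant but also correct.
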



	\vskip.2cm
	Now, we consider the problem of characterizing rings for which the line graphs of the essential graphs is outerplanar. We need the following result. 
	
	\begin{thm}\label{opl} \textnormal{\cite{hlin}}
		A graph \( G \) has an  outerplanar line graph \( L(G) \) if and only if \( G \) contains no subdivision of any of the following graphs:\, $K_{2,3}$,\, $K_{1,4}$,\, $P_3 \vee K_1 $.
	\end{thm}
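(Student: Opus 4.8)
The plan is to route through a structural description of the graphs with outerplanar line graph and then read off the three excluded subdivisions. First I would make the routine reductions: $L(G)$ is outerplanar iff $L(H)$ is outerplanar for each connected component $H$ of $G$, and $G$ contains a subdivision of one of $K_{2,3},K_{1,4},P_3\vee K_1$ iff some component does; isolated vertices and isolated edges contribute nothing, so I may assume $G$ is connected with at least two edges. I will also use the standard fact that a graph is outerplanar exactly when it has no $K_4$-minor and no $K_{2,3}$-minor, together with the fact that outerplanarity is a block-wise property.

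The core is the following lemma: for connected $G$, \emph{$L(G)$ is outerplanar if and only if $\Delta(G)\le 3$ and every vertex of $G$ of degree $3$ is a cut vertex} (equivalently, iff $\Delta(G)\le 3$ and every block of $G$ is an edge or a cycle). For the ``only if'' direction I argue contrapositively. A vertex of degree $\ge 4$ makes its four incident edges a $K_4$ inside $L(G)$. If a vertex $v$ of degree $3$, with incident edges $e_1,e_2,e_3$ to neighbours $x_1,x_2,x_3$, is not a cut vertex, then $G-v$ is connected, so it contains a path $P$ from $x_1$ to $x_2$ and a path $Q$ from $x_3$ to a vertex of $P$; taking the branch sets $\{e_1\},\{e_2\},\{e_3\}\cup E(Q),E(P)$ exhibits a $K_4$-minor in $L(G)$ (these sets are connected in $L(G)$, pairwise adjacent, and disjoint, since $E(P),E(Q)\subseteq E(G-v)$ avoid $e_1,e_2,e_3$). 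In either case $L(G)$ is not outerplanar.

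For the ``if'' direction I would use that $L(G)=\bigcup_{v\in V(G)}K_{E_v}$, the union of the cliques on the edge-sets $E_v$ at the vertices; when $\Delta(G)\le 3$ each $K_{E_v}$ is $K_1,K_2$ or $K_3$, and when moreover every block of $G$ is an edge or a cycle one verifies that every block of $L(G)$ is outerplanar --- each is obtained from a cycle $L(C)=C$ (or from a single edge) by attaching, at the images of the cut vertices of $G$, only triangles or pendant vertices along boundary edges --- so $L(G)$ is outerplanar. With the lemma established, the translation is short: a graph has a subdivision of $K_{1,4}$ iff it has a vertex of degree $\ge 4$, so the exclusion of $K_{1,4}$-subdivisions is precisely $\Delta(G)\le 3$; and, assuming $\Delta(G)\le 3$, the size-$2$-side branch vertices of a $K_{2,3}$-subdivision and the two degree-$3$ branch vertices of a $(P_3\vee K_1)$-subdivision all have degree $3$ in $G$ and, by the connectivity argument above, fail to be cut vertices, while conversely a degree-$3$ non-cut vertex $v$ yields in $G-v$ a minimal tree on $x_1,x_2,x_3$ that is a spider (giving a $K_{2,3}$-subdivision, with the spider centre as the second size-$2$ vertex) or a path (giving a $P_3\vee K_1=K_4-e$ subdivision). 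Thus ``$G$ has no subdivision of $K_{2,3},K_{1,4},P_3\vee K_1$'' is equivalent to the condition of the lemma, and the theorem follows.

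I expect the main obstacle to be the ``if'' direction of the structural lemma: pinning down exactly what the blocks of $L(G)$ are when every block of $G$ is an edge or a cycle, and checking that the cliques created at the cut vertices of $G$ are pasted onto the cycles $L(C)$ in a way that creates no $K_4$- or $K_{2,3}$-minor --- this is where one has to track the cyclic orders on the outer faces (a naive induction on the block--cut tree stumbles on the fact that adding a vertex adjacent to two adjacent vertices of an outerplanar graph need not preserve outerplanarity). A minor auxiliary ingredient, used in several of the subdivision constructions, is the elementary fact that three vertices of a connected graph are always spanned by a subgraph that is a path or a spider.
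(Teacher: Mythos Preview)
The paper does not prove this theorem; it is quoted without proof from the cited reference of Lin, Yang, Zhang and Shu, so there is no argument in the paper against which to compare your proposal.

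That said, your route is sound and is essentially the one taken in the original source. The structural lemma ``$L(G)$ outerplanar $\iff$ $\Delta(G)\le 3$ and every vertex of degree $3$ is a cut vertex (equivalently, every block of $G$ is an edge or a cycle)'' is correct, and your ``only if'' direction and the translation to the three forbidden subdivisions are accurate. One small point: in your $K_4$-minor construction you should say explicitly that $Q$ is chosen to meet $P$ only at its terminal vertex, so that $E(P)$ and $E(Q)$ are disjoint; the spider/path tree you invoke later does exactly this.

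For the ``if'' direction, the block analysis you sketch can be completed cleanly as follows. Assume $G$ is connected, $\Delta(G)\le 3$, and every block of $G$ is an edge or a cycle. Any bridge $f=vw$ of $G$ with $\deg v,\deg w\ge 2$ is a cut vertex of $L(G)$, since every edge-walk in $G$ between an edge at $v$ and an edge at $w$ must traverse $f$. It follows that the blocks of $L(G)$ are of three types: single edges $K_2$; triangles $K_3$ (from degree-$3$ vertices of $G$ whose three incident edges are all bridges); and, for each cycle block $C$ of $G$, the cycle $L(C)$ together with, for each cut vertex $v$ of $G$ on $C$, one extra vertex $f_v$ adjacent to the two $C$-edges at $v$. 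Distinct cut vertices on $C$ give apices on \emph{distinct} edges of $L(C)$, so this last type is a cycle with pairwise edge-disjoint triangles attached; placing the apices on the outer face yields an outerplanar embedding (the outer boundary alternates between cycle edges and apices, and the only chords are the short skips $e_ie_{i+1}$). Thus every block of $L(G)$ is outerplanar, hence so is $L(G)$. This avoids the pitfall you correctly anticipated, since the apices are attached to different edges of the base cycle and never to the same pair of adjacent vertices twice.
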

	

	For the case of local rings, the result follows easily.
	
	\begin{thm}\label{oegl}
		Let $A$ be a finite local commutative ring. Then, the line graph of the essentail graph of $A$  is outerplanar if and only if $A$ is isomorphic to one of the following $11$ rings:
		\(\mathbb Z_4,\hspace{.2cm}
		\mathbb Z_9,\hspace{.2cm}
		\mathbb Z_8,\hspace{.2cm}
		\frac{\mathbb Z_2[x]}{\langle x^2 \rangle},\hspace{.2cm}
		\frac{Z_3[x]}{\langle x^2 \rangle},\hspace{.2cm}
		\frac{\mathbb Z_2[x]}{\langle x^3 \rangle},\hspace{.2cm} 
		\frac{\mathbb Z_4[x]}{\langle x^3, \, x^2- 2 \rangle },\hspace{.2cm} 
		\frac{\mathbb F_4[x]}{ \langle x^2 \rangle},\hspace{.2cm}
		\frac{\mathbb Z_4[x]}{\langle 2x, x^2 \rangle},\hspace{.2cm} 
		\frac{\mathbb Z_2[x,y]}{\langle x^2, xy , y^2 \rangle},\hspace{.2cm}
		\frac{\mathbb Z_4[x]}{\langle x^2 + x + 1 \rangle}.\)
	\end{thm}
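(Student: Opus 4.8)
The plan is to reduce the statement to a vertex count via the forbidden-subgraph criterion for outerplanar line graphs (Theorem~\ref{opl}), using the fact that the essential graph of a finite local ring is complete (Corollary~\ref{fcg}). Writing $n = |Z(A)^*|$, we have $EG(A) \cong K_n$, so the whole question becomes: for which $n$ is $L(K_n)$ outerplanar? By Theorem~\ref{opl} this holds precisely when $K_n$ contains no subdivision of $K_{2,3}$, of $K_{1,4}$, or of $P_3 \vee K_1$.

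For the ``if'' direction, I would observe that each of the three forbidden configurations lives on at least four vertices: a subdivision of $K_{2,3}$ or of $K_{1,4}$ needs at least five vertices, and $P_3 \vee K_1$ already has four. Hence when $n \le 3$ the graph $K_n$ has too few vertices to contain any of them, so $L(EG(A))$ is outerplanar. (Equivalently one can just note directly that $L(K_1)$ is edgeless, $L(K_2) \cong K_1$, and $L(K_3) \cong K_3$, all outerplanar.) Then, reading off the rows $K_1$, $K_2$, $K_3$ of Table~\ref{table of local rings} — which by the remark preceding it lists \emph{all} finite local rings whose essential graph has at most six vertices, noting in particular that no local ring has exactly five nonzero zero-divisors — gives exactly the $2+2+7 = 11$ rings named in the statement.

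For the ``only if'' direction, suppose $n \ge 4$. Since $K_4 \subseteq K_n$ and deleting one edge from $K_4$ leaves a copy of $P_3 \vee K_1$ (a vertex adjacent to all three vertices of a path on three vertices), the graph $EG(A) \cong K_n$ contains $P_3 \vee K_1$, hence a subdivision of it. By Theorem~\ref{opl}, $L(EG(A))$ is not outerplanar, so $A$ cannot be among the listed rings. Combining the two directions yields the equivalence; as a consistency check, the $11$ rings obtained here are exactly the subset of the $13$ planar ones of Theorem~\ref{pegl} whose essential graph is $K_1$, $K_2$ or $K_3$, which is forced since outerplanarity implies planarity.

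I do not expect a genuine obstacle: once Theorem~\ref{opl} and Corollary~\ref{fcg} are available, the argument is a vertex count plus a table lookup. The only points requiring a little care are confirming the minimum orders of the forbidden subdivisions (so that $K_3$ is safe while $K_4$ is not) and relying on the completeness of the enumeration in Table~\ref{table of local rings}, both of which are already established in the excerpt.
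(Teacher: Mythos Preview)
Your proposal is correct and follows essentially the same route as the paper: use Corollary~\ref{fcg} to identify $EG(A)$ with $K_n$, invoke Theorem~\ref{opl} to rule out $n\ge 4$ via the $P_3\vee K_1$ subgraph of $K_4$, and read the $n\le 3$ cases off Table~\ref{table of local rings}. The paper's argument is the same but terser, omitting your explicit verification that $K_1,K_2,K_3$ avoid the forbidden subdivisions and the consistency check against Theorem~\ref{pegl}.
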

	
	\begin{proof}
		Since \( A \) is a finite local commutative ring, Corollary \ref{fcg} implies that \( EG(A) \) is the complete graph \( K_n \) with \( n = |Z(A)^*|.\)  If  \( n \geq 4,\) then $K_n$ contains \( P_3 \vee K_1\) as a subgraph, and by Theorem \ref{opl}, this implies that \(L(EG(A))\) is not outerplanar.  For \( n \leq 3 \), the result follows from Table \ref{table of local rings}.  
	\end{proof}
	
	
	In the existing literature, the outerplanarity of the line graph of the zero-divisor graph has not been studied. Therefore, we begin by examining the outerplanarity of the line graph of the zero-divisor graph for finite non-local rings.

	\begin{thm}\label{outerplanarity of zero-divisor graph}
		Let $A$ be a finite non-local commutative ring. Then the line graph $L(\Gamma(A))$ of a zero-divisor graph of $A$  is outerplanar if and only if $A$ is isomorphic to one of the rings: 
		$\mathbb Z_2\times \mathbb Z_2,\hspace{.2cm}
		\mathbb Z_2\times \mathbb Z_3,\hspace{.2cm}
		\mathbb Z_3\times \mathbb Z_3,\hspace{.2cm}
		\mathbb Z_2\times \mathbb F_4,\hspace{.2cm} 
		\mathbb Z_2 \times \mathbb Z_4, \, \hspace{.2cm}
		\mathbb Z_2 \times \frac{ \mathbb Z_2[x]}{\langle x^2 \rangle},\hspace{.2cm}
		\mathbb Z_2 \times \mathbb Z_2 \times \mathbb Z_2.$
	\end{thm}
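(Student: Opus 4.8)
The plan is to exploit the fact that outerplanarity is strictly stronger than planarity: since every outerplanar graph is planar, if $L(\Gamma(A))$ is outerplanar then it is in particular planar, so by Lemma~\ref{pzg} the ring $A$ must already be one of the eleven rings listed there. The theorem then reduces to deciding, for each of these eleven candidates, whether $L(\Gamma(A))$ is genuinely outerplanar, and I would organize this finite check through the forbidden-subdivision criterion of Theorem~\ref{opl} (no subdivision of $K_{2,3}$, $K_{1,4}$, or $P_3\vee K_1$ in $\Gamma(A)$).

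For the forward implication it remains to discard the four rings appearing in Lemma~\ref{pzg} but not in the statement, namely $\mathbb{Z}_3\times\mathbb{F}_4$, $\mathbb{Z}_2\times\mathbb{Z}_5$, $\mathbb{Z}_3\times\mathbb{Z}_4$ and $\mathbb{Z}_3\times\frac{\mathbb{Z}_2[x]}{\langle x^2\rangle}$. For the two reduced ones this is immediate from Lemmas~\ref{complete bipartite} and~\ref{Zero divisor graph = essential graph}: $\Gamma(\mathbb{Z}_3\times\mathbb{F}_4)\cong K_{2,3}$ and $\Gamma(\mathbb{Z}_2\times\mathbb{Z}_5)\cong K_{1,4}$, and $K_{2,3}$ and $K_{1,4}$ are themselves on the forbidden list. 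For the two non-reduced rings, which have the shape $\mathbb{Z}_3\times B$ with $|B|=4$, I would note directly that the five vertices $(1,0),(2,0)$ together with the three vertices $(0,b)$, $b\in B\setminus\{0\}$, span a copy of $K_{2,3}$ in $\Gamma(\mathbb{Z}_3\times B)$, since $(i,0)(0,b)=(0,0)$ for $i\in\{1,2\}$ while $(1,0)(2,0)\neq 0$ and no product of two distinct nonzero elements of $B$ vanishes. In each of the four cases Theorem~\ref{opl} then shows $L(\Gamma(A))$ is not outerplanar.

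For the converse I would verify outerplanarity of $L(\Gamma(A))$ for the seven listed rings by describing $\Gamma(A)$ explicitly. By Lemmas~\ref{complete bipartite} and~\ref{Zero divisor graph = essential graph} one has $\Gamma(\mathbb{Z}_2\times\mathbb{Z}_2)=K_2$, $\Gamma(\mathbb{Z}_2\times\mathbb{Z}_3)=P_3$, $\Gamma(\mathbb{Z}_3\times\mathbb{Z}_3)=C_4$, $\Gamma(\mathbb{Z}_2\times\mathbb{F}_4)=K_{1,3}$; a short computation of the adjacency rule $uv=0$ shows that for both $\mathbb{Z}_2\times\mathbb{Z}_4$ and $\mathbb{Z}_2\times\frac{\mathbb{Z}_2[x]}{\langle x^2\rangle}$ the graph $\Gamma(A)$ is the tree obtained from $K_{1,3}$ by subdividing one edge, and that $\Gamma(\mathbb{Z}_2\times\mathbb{Z}_2\times\mathbb{Z}_2)$ is a triangle with one pendant vertex attached at each of its three vertices. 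In every case $\Gamma(A)$ has maximum degree at most $3$ and is a forest or a unicyclic graph. Since any subdivision of $K_{1,4}$ contains a vertex of degree $4$, while any subdivision of $K_{2,3}$ or of $P_3\vee K_1$ contains two independent cycles, none of the forbidden configurations of Theorem~\ref{opl} can embed in $\Gamma(A)$; hence $L(\Gamma(A))$ is outerplanar for all seven rings. (Equivalently one may simply draw the seven line graphs — respectively $K_1$, $K_2$, $C_4$, $K_3$, the ``paw'' graph twice, and a triangulated hexagon — and check outerplanarity by hand.)

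The only place requiring any care is the non-reduced rings, where Lemma~\ref{Zero divisor graph = essential graph} is unavailable and $\Gamma(A)$ must be worked out from scratch: $\mathbb{Z}_3\times\mathbb{Z}_4$ and $\mathbb{Z}_3\times\frac{\mathbb{Z}_2[x]}{\langle x^2\rangle}$ on the negative side and $\mathbb{Z}_2\times\mathbb{Z}_4$, $\mathbb{Z}_2\times\frac{\mathbb{Z}_2[x]}{\langle x^2\rangle}$ on the positive side. Once their zero-divisor graphs are identified as above, everything reduces to inspecting graphs on at most six vertices, so I expect no substantial obstacle.
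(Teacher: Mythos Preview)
Your proof is correct, but the strategy differs from the paper's. The paper does \emph{not} pass through Lemma~\ref{pzg}; instead it argues directly from the decomposition $A\cong A_1\times\cdots\times A_n$, exhibiting a $P_3\vee K_1$ in $\Gamma(A)$ when $n\ge 4$, a $K_{1,4}$ when $n=3$ and some $|A_i|\ge 3$, and then handling $n=2$ by further case splits on whether the factors are fields and on $|\mathfrak m_2^*|$ and $|A_1|$. Your route---observing that outerplanar implies planar, invoking Lemma~\ref{pzg} to cut down to eleven explicit rings, and then eliminating four of them by exhibiting a $K_{2,3}$ or $K_{1,4}$---is shorter and perfectly legitimate, since Lemma~\ref{pzg} is quoted from \cite{chi-hsi} and is available at this point in the paper. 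The trade-off is that the paper's approach is self-contained and would survive even without the planarity classification, while yours leverages existing work more efficiently. Your treatment of the converse (bounding the maximum degree by $3$ and the circuit rank by $1$ to rule out all three forbidden subdivisions simultaneously) is also cleaner than the paper's case-by-case drawing of the seven line graphs, though both are valid.
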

	
	\begin{proof} 
		Since \( A \) is a non-local finite ring, we can write \( A \cong A_1 \times A_2 \times \cdots \times A_n \), where each \( A_i \) is a finite local ring and \( n \geq 2 \). Let us assume that the line graph \( L(\Gamma(A))\) is outerplanar. If \( n \geq 4 \), consider the vertices   
		\( s_1 = (1,0,0,0,\ldots,0) \),  
		\( s_2 = (0,1,0,0,\ldots,0) \),  
		\( s_3 = (0,0,1,0,\ldots,0) \),  
		\( s_4 = (0,0,0,1,\ldots,0) \)  of $\Gamma(A).$
		Then, these vertices induce $K_4$ as a subgraph of \( \Gamma(A) \), which contains \( P_3 \vee K_1 \) as a subgraph, contradicting Theorem~\ref{opl}. Hence, \( n = 2 \) or \( n = 3 \).

		Suppose $n = 3$. We claim that \( |A_i| = 2 \) for each \( i = 1, 2, 3 .\) Assume that $|A_i| \geq 3$ for some $i.$  We may assume that $|A_1| \geq 3.$ Consider the vertices $s_1 = (1,0,0,\dots, 0),\,s_2 =(u,0,0,\dots,0),\, s_3 =(0,1,0,\dots,0),\,s_4 =(1,1,0,\dots,0),\,r_1 =(0,0,1,0,\dots,0),$ where \( u \in A_1^\times \setminus \{1\}.\) Then \( s_i r_1 = 0 \) for \( i \in \{ 1,2,3,4\} \), so the subgraph induced by these vertices contains \( K_{1,4} \) as a subgraph, contradicting Theorem~\ref{opl}. Thus, \( |A_i| = 2 \) for each \( i \), and hence \( A \cong \mathbb{Z}_2 \times \mathbb{Z}_2 \times \mathbb{Z}_2 \).
		
		Suppose \( n = 2 \), then we have \( A \cong A_1 \times A_2 \). Let us first assume that both \( A_1 \) and \( A_2 \) are fields. Then \( A \) is a reduced ring, therefore by Lemma~\ref{Zero divisor graph = essential graph}, \( \Gamma(A) = EG(A) \). According to Lemma~\ref{complete bipartite}, the graph \( \Gamma(A) \) is isomorphic to \( K_{|A_1|-1, |A_2|-1} \). Furthermore, Theorem~\ref{opl} ensures that \( K_{1,4} \) and \( K_{2,3} \) cannot appear as subgraphs in an outerplanar graph. This shows that \(|A_1| = 2\) and \(|A_2| \leq 4,\) or \(|A_{1}| = 3\) and \(|A_{2}| \leq 3\). Hence, the possible rings in this case are: 
		\(
		\mathbb Z_2\times \mathbb Z_2,\,
		\mathbb Z_2\times \mathbb Z_3,\,
		\mathbb Z_3\times \mathbb Z_3,\,
		\mathbb Z_2 \times \mathbb F_4.
		\)
		
		Now, suppose one of \( A_1 ,\)  \( A_2 \) is not a field. Assume, for instance, that \( A_2 \) is not a field. Let \( \mathfrak{m}_2 \) be the unique maximal ideal of \( A_2 \).  Suppose \( |\mathfrak{m}_2^*| \geq 2 \), so \( |A_2^\times| \geq 4 \). Consider the vertices $r_1 = (1,0),$ $s_1 = (0,u_1)$, $s_2 = (0,u_2)$, $s_3 = (0,u_3)$, $s_4 = (0,u_4)$,  where $u_1, u_2, u_3, u_4 \in A_2^\times.$ Observe that \( r_1 s_i = 0 \) for each \( i \in\{ 1, 2, 3, 4 \} \). Hence, the vertices \( \{r_1, s_1, s_2, s_3, s_4\} \) induce a subgraph in \( \Gamma(A) \) isomorphic to \( K_{1,4} \), which contradicts Theorem~\ref{opl}. Thus, \( |\mathfrak{m}_2^*| = 1 \), implying \( A_2 \cong \mathbb{Z}_4 \) or \( \frac{ \mathbb Z_2[x]}{\langle x^2 \rangle} \). Suppose \( |A_1| \geq 3 \). Choose \( u \in A_1^\times \setminus \{1\} \) and \( a \in \mathfrak{m}_2^* \) such that \( \text{ann}_{A_2}(a)= \mathfrak{m}_2\), hence \(a^2 = 0\). Consider the vertices 
		\(
		r_1 = (0,a),
		s_1 = (1,0), 
		s_2 = (u,0),            
		s_3 = (1,a), 
		s_4 = (u,a).            
		\)
		Then \( r_1s_i  = 0 \) for all \( i \in \{ 1,2,3,4 \}\). Therefore, the vertices \( \{r_1, s_1, s_2, s_3, s_4\} \) induce a subgraph in \( \Gamma(A) \) isomorphic to \( K_{1,4} \), again contradicting Theorem~\ref{opl}.  Hence, \( |A_1| = 2 \), so \( A_1 \cong \mathbb{Z}_2 \), and in this case the only possibile rings are:
		\(
		\mathbb{Z}_2 \times \mathbb{Z}_4, \,
		\mathbb{Z}_2 \times \frac{\mathbb Z_2[x]}{\langle x^2 \rangle}.
		\)
		Thus, if \( L(\Gamma(A)) \) is outerplanar, then \( A \) is isomorphic to one of the following rings:
		\(
		\mathbb{Z}_2 \times \mathbb{Z}_2, \quad
		\mathbb{Z}_2 \times \mathbb{Z}_3, \quad
		\mathbb{Z}_3 \times \mathbb{Z}_3, \quad
		\mathbb{Z}_2 \times \mathbb{F}_4, \quad
		\mathbb{Z}_2 \times \mathbb{Z}_4, \quad
		\mathbb{Z}_2 \times \mathbb{Z}_2[x]/\langle x^2 \rangle, \quad
		\mathbb{Z}_2 \times \mathbb{Z}_2 \times \mathbb{Z}_2.
		\)

		Conversely, 
		we verify that for each of these rings, \( L(\Gamma(A)) \) is outerplanar.
		\begin{itemize}	
			\item If \( A \cong \mathbb{Z}_2 \times \mathbb{Z}_2\quad\text{then}\quad \Gamma(A) = K_2\quad  \rightarrow\quad  L(\Gamma(A)) = K_1 \) is outerplanar.
			\item If \( A \cong \mathbb{Z}_2 \times \mathbb{Z}_3\quad \text{then}\quad  \Gamma(A) = P_3\quad  \rightarrow\quad L(\Gamma(A)) = K_2 \)  is outerplanar.
			\item If \( A \cong \mathbb{Z}_3 \times \mathbb{Z}_3\quad \text{then}\quad  \Gamma(A) = C_4\quad  \rightarrow\quad L(\Gamma(A)) = C_4 \) is outerplanar. 
			\item If \( A \cong \mathbb{Z}_2 \times \mathbb{F}_4\quad \text{then}\quad  \Gamma(A) = K_{1,3}\hspace{.2cm}  \rightarrow\quad L(\Gamma(A)) = K_3 \) is outerplanar.
			\item If \( A \cong \mathbb{Z}_2 \times \mathbb{Z}_4\quad \text{then}\quad  \Gamma(A) = \text{Fig.\ref{zero divisor graph of Z2 X Z4}(a)}\hspace{.2cm}  \rightarrow\hspace{.2cm} L(\Gamma(A)) =\text{ Fig.\ref{zero divisor graph of Z2 X Z4}(b)}\) is outerplanar.
			\item If \( A \cong \mathbb{Z}_2 \times \frac{\mathbb{Z}_2[x]}{\langle x^2 \rangle}\quad \text{then}\quad  \Gamma(A) \cong \text{Fig.\ref{zero divisor graph of Z2 X Z4}(a)}\hspace{.2cm} \rightarrow\hspace{.2cm} L(\Gamma(A)) \cong \text{ Fig.\ref{zero divisor graph of Z2 X Z4}(b)}\) is outerplanar.
			\item If \( A \cong \mathbb{Z}_2 \times \mathbb{Z}_2 \times \mathbb{Z}_2 \hskip.1cm \text{then}\hskip.1cm \Gamma(A) = \text{Fig.\ref{fig of z2 x z2 x z2}(a)}\hspace{.2cm}  \rightarrow\hspace{.2cm} L(\Gamma(A)) =\text{Fig.\ref{fig of z2 x z2 x z2}(b)}\) is outerplanar. \end{itemize}

	\end{proof}

	\begin{figure}[h!]
		\begin{tikzpicture}[scale= 0.6, font= \small]
			
			\filldraw (-0.5,1.5) circle (3pt);
			\filldraw (1.5,0) circle (3pt);
			\filldraw (1.5,1.5) circle (3pt);
			\filldraw (3.5, 1.5) circle (3pt);
			\filldraw (1.5, 3) circle (3pt);
			\draw[line width=0.2mm, black] (1.5,0) -- (-0.5,1.5);
			\draw[line width=0.2mm, black] (1.5,0) -- (1.5,1.5);
			\draw[line width=0.2mm, black] (1.5,0) -- (3.5,1.5);
			\draw[line width=0.2mm, black] (1.5,1.5) -- (1.5,3);
			
			\node[] at (1.5, -0.5) {\tiny(1,0)};
			\node[] at (-0.5,1.85) {\tiny(0,1)};
			\node[] at (2, 1.8) {\tiny(0,2)};
			\node[] at (3.5, 1.85) {\tiny(0,3)};
			\node[] at (1.5, 3.5) {\tiny(1,2)};
			
			\node[] at (0.25,0.65) {\tiny$e_1$};
			\node[] at (1.25,0.85) {\tiny$e_2$};
			\node[] at (2.75,0.65) {\tiny$e_3$};
			\node[] at (1.25,2.5) {\tiny$e_4$};
			
			\node[] at (1.5, -1.5) {(a)};
		\end{tikzpicture}   
		\hspace{2cm}
		\begin{tikzpicture}[scale = 0.8]
			
			\filldraw (0,0) circle (3pt);
			\filldraw (3,0) circle (3pt);
			\filldraw (1.5,1.5) circle (3pt);
			\filldraw (3.5,1.5) circle (3pt);

			\draw[line width=0.2mm, black] (0,0) -- (3,0);
			\draw[line width=0.2mm, black] (0,0) -- (1.5,1.5);
			\draw[line width=0.2mm, black] (3,0) -- (1.5,1.5);
			\draw[line width=0.2mm, black] (3,0) -- (3.5,1.5);
			
			\node[] at (0,-0.45) {\footnotesize$e_1$};
			\node[] at (3,-0.45) {\footnotesize$e_2$};
			\node[] at (1.5, 1.8) {\footnotesize$e_3$};
			\node[] at (3.5, 1.8 ) {\footnotesize$e_4$};
			
			\node[] at (1.5, -1.5) {(b)};
			
		\end{tikzpicture}
		
		\caption{(a): $\Gamma(\mathbb Z_2 \times \mathbb Z_4)$  \hspace{0.5cm} (b): $ L(\Gamma(\mathbb Z_2 \times \mathbb Z_4))$}
		\label{zero divisor graph of Z2 X Z4}
	\end{figure}
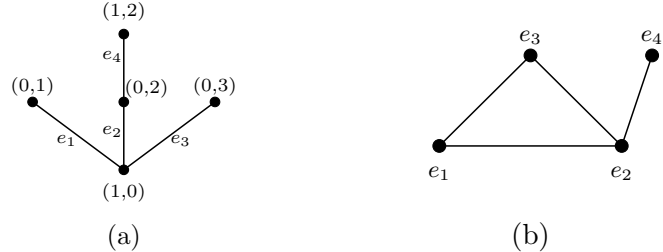

	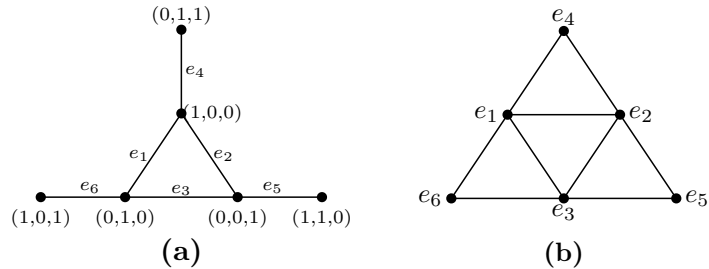
\begin{figure}[h!]
		\begin{center}
			\begin{tikzpicture}[scale=0.37, ultra thick]
				
				
				\filldraw (3,10) circle (3pt);
				\filldraw (3,7) circle (3pt);
				\filldraw (1,4) circle (3pt);
				\filldraw (5,4) circle (3pt);
				\filldraw (-2,4) circle (3pt);
				\filldraw (8,4) circle (3pt);
				
				\draw [line width=0.20mm, black] (3,7) -- (3,10);
				\draw [line width=0.20mm, black] (3,7) -- (1,4);
				\draw [line width=0.20mm, black] (3,7) -- (5,4);
				\draw [line width=0.20mm, black] (1,4) -- (-2,4);
				\draw [line width=0.20mm, black] (1,4) -- (5,4);
				\draw [line width=0.20mm, black] (5,4) -- (8,4);
				
				\node[font=\tiny] at (3,10.5) {(0,1,1)};
				\node[font=\tiny] at (4.1,7) {(1,0,0)};
				\node[font=\tiny] at (1,3.25) {(0,1,0)};
				\node[font=\tiny] at (5,3.25) {(0,0,1)};
				\node[font=\tiny] at (-2,3.25) {(1,0,1)};
				\node[font=\tiny] at (8,3.25) {(1,1,0)};
				
				\node[font=\tiny] at (1.5, 5.5) {$e_1$};
				\node[font=\tiny] at (4.5, 5.5) {$e_2$};
				\node[font=\tiny] at (3, 4.25) {$e_3$};
				\node[font=\tiny] at (3.5, 8.5) {$e_4$};
				\node[font=\tiny] at (6.25,4.35) {$e_5$};
				\node[font=\tiny] at (-0.25,4.35) {$e_6$};
				\node[] at (3,2) {\textbf{(a)}};

			\end{tikzpicture}
			\hspace{10pt}
			\begin{tikzpicture}[scale=0.37, ultra thick, font=\small]
				
				
				\filldraw (4,10) circle (3pt);
				\filldraw (2,7) circle (3pt);
				\filldraw (6,7) circle (3pt);
				\filldraw (0,4) circle (3pt);
				\filldraw (4,4) circle (3pt);
				\filldraw (8,4) circle (3pt);

				\draw [line width=0.20mm, black] (4,10) -- (2,7);
				\draw [line width=0.20mm, black] (4,10) -- (6,7);
				\draw [line width=0.20mm, black] (2,7) -- (6,7);
				\draw [line width=0.20mm, black] (2,7) -- (0,4);
				\draw [line width=0.20mm, black] (2,7) -- (4,4);
				\draw [line width=0.20mm, black] (6,7) -- (4,4);
				\draw [line width=0.20mm, black] (6,7) -- (8,4);
				\draw [line width=0.20mm, black] (0,4) -- (4,4);
				\draw [line width=0.20mm, black] (4,4) -- (8,4);

				\node[] at (1.25,7) {$e_1$};
				\node[] at (6.75,7) {$e_2$};
				\node[] at (4,3.5) {$e_3$};
				\node[] at (4,10.5) {$e_4$};
				\node[] at (8.75,4) {$e_5$};
				\node[] at (-0.75,4) {$e_6$};
				\node[] at (4, 2) {\textbf{(b)}};
				
			\end{tikzpicture}
			
			\caption{\textbf{(a) $\Gamma(\mathbb Z_2 \times \mathbb Z_2 \times \mathbb Z_2)$ \;  (b) $L(\Gamma(\mathbb Z_2 \times \mathbb Z_2 \times \mathbb Z_2))$}}\label{fig of z2 x z2 x z2}
			\label{Z2xZ2xZ2}
		\end{center}
		
	\end{figure}

	\newpage
	
	\begin{thm}\label{oegnl}
		Let $A$ be a finite non-local commutative ring. Then, the  line graph of the essential graph of $A$ is outerplanar if and only if $A$ is isomorphic to one of the rings:
		$\mathbb Z_2\times \mathbb Z_2,\hspace{.2cm}
		\mathbb Z_2\times \mathbb Z_3,\hspace{.2cm}
		\mathbb Z_3\times \mathbb Z_3,\hspace{.2cm} 
		\mathbb Z_2\times \mathbb F_4,\hspace{.2cm}
		\mathbb Z_2 \times \mathbb Z_2 \times \mathbb Z_2.$
	\end{thm}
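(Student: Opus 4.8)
The plan is to follow the pattern of Theorems~\ref{pegnl} and~\ref{outerplanarity of zero-divisor graph}. Write $A \cong A_1 \times \cdots \times A_n$ with each $A_i$ a finite local ring, $n \ge 2$, and let $\mathfrak{m}_i$ be the maximal ideal of $A_i$. The forward direction has two steps: (1) if $L(EG(A))$ is outerplanar then every $A_i$ is a field, so $A$ is reduced; (2) for reduced $A$ we have $EG(A) = \Gamma(A)$, and the classification of Theorem~\ref{outerplanarity of zero-divisor graph} pins down $A$.

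For step (1), suppose some $A_i$ is not a field; after reindexing, $A_1$ is not a field, so $\mathfrak{m}_1 \ne \{0\}$, and we may choose $a \in \mathfrak{m}_1^{*}$ with $\mathrm{ann}_{A_1}(a) = \mathfrak{m}_1$, whence $a^2 = 0$. Consider the vertices $r_1 = (a,0,\dots,0)$, $s_1 = (1,0,\dots,0)$, $s_2 = (0,1,0,\dots,0)$, $s_3 = (a,1,0,\dots,0)$ of $EG(A)$; since $a \notin \{0,1\}$ they are pairwise distinct (and each is a non-zero zero-divisor, e.g.\ $s_3 r_1 = 0$). Now $s_1 s_2 = 0$ gives $s_1 \sim s_2$; the element $r_1$ is a non-zero nilpotent, so by Lemma~\ref{cg}(i) it is adjacent to $s_1, s_2, s_3$, and in particular $\mathrm{ann}_A(r_1) = \mathrm{ann}_A(r_1 s_1)$ is essential, so from $s_1 s_3 = r_1$ we also get $s_1 \sim s_3$. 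Hence the path $s_2 - s_1 - s_3$ together with $r_1$ (adjacent to all three of its vertices) is a subgraph of $EG(A)$ isomorphic to $P_3 \vee K_1$, and Theorem~\ref{opl} forces $L(EG(A))$ to be non-outerplanar, a contradiction. Therefore every $A_i$ is a field, and $A$ is reduced.

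For step (2), since $A$ is reduced Lemma~\ref{Zero divisor graph = essential graph} gives $EG(A) = \Gamma(A)$, so $L(EG(A))$ is outerplanar if and only if $L(\Gamma(A))$ is; Theorem~\ref{outerplanarity of zero-divisor graph} then restricts $A$ to the five reduced rings among the seven listed there, namely $\mathbb{Z}_2 \times \mathbb{Z}_2$, $\mathbb{Z}_2 \times \mathbb{Z}_3$, $\mathbb{Z}_3 \times \mathbb{Z}_3$, $\mathbb{Z}_2 \times \mathbb{F}_4$, $\mathbb{Z}_2 \times \mathbb{Z}_2 \times \mathbb{Z}_2$ (the other two, $\mathbb{Z}_2 \times \mathbb{Z}_4$ and $\mathbb{Z}_2 \times \mathbb{Z}_2[x]/\langle x^2 \rangle$, are non-reduced and hence already excluded in step (1)). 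For the converse, each of the five rings is reduced, so $EG(A) = \Gamma(A)$ and $L(\Gamma(A))$ is outerplanar by Theorem~\ref{outerplanarity of zero-divisor graph}. I do not expect a genuine obstacle here: the statement is essentially a corollary of the two cited theorems. The only points requiring care are the routine bookkeeping of step (1) --- distinctness of $r_1, s_1, s_2, s_3$, the verification that each is a non-zero zero-divisor, and the essentiality of $\mathfrak{m}_1 \times A_2 \times \cdots \times A_n$ in $A$, which reduces to $\mathfrak{m}_1$ being essential in the non-field Artinian local ring $A_1$ (immediate from $\mathfrak{m}_1$ being nilpotent, or directly from Lemma~\ref{cg}(i) as above).
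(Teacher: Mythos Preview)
Your proof is correct and follows essentially the same approach as the paper: show that if some $A_i$ is not a field then $EG(A)$ contains a copy of $P_3 \vee K_1$ (contradicting Theorem~\ref{opl}), and then, once $A$ is reduced, invoke Lemma~\ref{Zero divisor graph = essential graph} and Theorem~\ref{outerplanarity of zero-divisor graph}. The only cosmetic difference is the particular choice of four vertices exhibiting $P_3 \vee K_1$---the paper uses $(1,0,\dots,0)$, $(a,0,\dots,0)$, $(u,0,\dots,0)$ with $u \in A_1^\times \setminus \{1\}$, and $(0,1,0,\dots,0)$, whereas you use $(1,0,\dots,0)$, $(0,1,0,\dots,0)$, $(a,1,0,\dots,0)$, and $(a,0,\dots,0)$, appealing to Lemma~\ref{cg}(i) for the adjacencies of the nilpotent vertex.
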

	
	\begin{proof}
		We begin by assuming that the line graph \( L(EG(A)) \), associated with the essential graph of \( A \), is outerplanar.
		Since \( A \) is non-local, it can be written as \( A \cong A_1 \times A_2 \times \cdots \times A_n \), where each \( A_i \) is a finite local ring and \( n \geq 2 \). We aim to show that every \( A_i \) must be a field. 
		Assume, towards a contradiction, that atleast one of $A_i$ is not a field.  Assume that \( A_1 \) is not a field. Then we have an element \( a \in \mathfrak{m}_1^* \) such that its annihilator satisfies \( \operatorname{ann}_{A_1}(a) = \mathfrak{m}_1 \), where \( \mathfrak{m}_1 \) is the maximal ideal of \( A_1 \). Consider a set of vertices \( S = \{s_1, s_2, s_3, r\} \), where  
		\( s_1 = (1,0,0,\ldots,0) \),  
		\( s_2 = (a,0,0,\ldots,0) \),  
		\( s_3 = (u,0,0,\ldots,0) \),  
		\( r = (0,1,0,\ldots,0) \),  
		with \( u \in A_1^\times \setminus \{1\} \).
		Observe that \( s_i r = 0 \), implying $\text{ann}_A(s_ir)$ is an essential ideal for \( 1 \leq i \leq 3 \).
		Additionally, for \( j = 1,3 \), the product \( s_j s_2 \) has 
		\(
		\operatorname{ann}_A(s_j s_2) = \mathfrak{m}_1 \times A_2 \times \cdots \times A_n,
		\)
		which is also an essential ideal. Hence, the set of vertices \(\{s_1,s_2,s_3\}\) forms a path $P_3$ which together with $r$ induces $P_3 \vee K_1$ as a subgraph in $EG(A)$, contradicting Theorem~\ref{opl}.
		Thus, all \( A_i \) must be fields, and \( A \) is reduced. Lemma~\ref{Zero divisor graph = essential graph} then gives \( EG(A) = \Gamma(A) \). Applying Theorem~\ref{outerplanarity of zero-divisor graph}, we conclude that \( A \) is isomorphic to one of the rings:
		\(
		\mathbb{Z}_2 \times \mathbb{Z}_2, \, 
		\mathbb{Z}_2 \times \mathbb{Z}_3, \, 
		\mathbb{Z}_3 \times \mathbb{Z}_3, \,
		\mathbb{Z}_2 \times \mathbb{F}_4, \, 
		\mathbb{Z}_2 \times \mathbb{Z}_2 \times \mathbb{Z}_2.
		\)
		
		Conversely, for each of these rings, we have \( EG(A) = \Gamma(A) \), and \( L(\Gamma(A)) \) is outerplanar by Theorem~\ref{outerplanarity of zero-divisor graph}.
		
	\end{proof}

	
	\section{Genus of the line graph of essential graph}
	\noindent In this section, we provide a complete classification of all finite commutative rings with identity for which the genus of the line graphs of their essential graphs is atmost 2.
	
	We first consider the case of local rings. 
	

	
	
	\begin{thm}\label{genus of local ring}
		For a finite local commutative ring $A,$ the genus of the line graph of the essential graph of $A$  is never 1 or 2.
	\end{thm}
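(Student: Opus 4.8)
The plan is to use Corollary~\ref{fcg}, which gives $EG(A) = K_n$ with $n = |Z(A)^*|$, and then to carry out a short case analysis on $n$. Since $A$ is local, the admissible values of $n$ are restricted: from Table~\ref{table of local rings} and the remark preceding it, either $n \in \{1,2,3,4\}$, or $n \geq 6$ (no local ring has exactly five non-zero zero-divisors). So it suffices to show that in the first range $L(EG(A))$ is planar, and in the second range $\gamma(L(EG(A))) \geq 3$.

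First I would dispose of $n \leq 4$. In that range Theorem~\ref{pegl} already enumerates all local rings with $|Z(A)^*| \leq 4$ and shows their essential line graphs are planar; hence $\gamma(L(EG(A))) = 0$, which is neither $1$ nor $2$. (Alternatively one checks directly that $L(K_m)$ is planar for $m \leq 4$ via Theorem~\ref{pl}.)

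Next, for $n \geq 6$ I would invoke the lower bound of Lemma~\ref{glkn}(i),
\[
\gamma\bigl(L(K_n)\bigr) \;\geq\; \tfrac{1}{12}(n-4)(n-3)(n+1).
\]
The right-hand side is increasing in $n$ for $n \geq 4$, and at $n = 6$ it equals $\tfrac{42}{12} = 3.5$; since the genus is an integer, $\gamma(L(EG(A))) = \gamma(L(K_n)) \geq \lceil 3.5 \rceil = 4$ for every $n \geq 6$ (equivalently, $K_6 \subseteq K_n$ forces $L(K_6)\subseteq L(K_n)$, so $\gamma(L(K_n)) \geq \gamma(L(K_6)) \geq 4$). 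Combining the two cases, $\gamma(L(EG(A)))$ is either $0$ or at least $4$, so it is never $1$ or $2$, as claimed.

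There is no substantial obstacle in this argument; the only points needing care are (i) justifying that the case $n = 5$ is vacuous, so that the jump from $n=4$ to $n=6$ is legitimate — this is the parenthetical fact recorded just before Table~\ref{table of local rings} — and (ii) using the sharp cubic lower bound of Lemma~\ref{glkn}(i) for $\gamma(L(K_6))$ rather than the weaker estimate $\gamma(L(K_6)) \geq \gamma(K_5) + \gamma(K_5) = 2$ from Lemma~\ref{lm2}(i), which would only rule out genus $1$ and not genus $2$.
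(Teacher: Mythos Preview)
Your proof is correct and follows essentially the same approach as the paper: both reduce to $EG(A)\cong K_n$ via Corollary~\ref{fcg}, invoke Theorem~\ref{pegl} for $n\le 4$, apply the cubic lower bound of Lemma~\ref{glkn}(i) for $n\ge 6$, and dispose of $n=5$ by the non-existence of a local ring with exactly five non-zero zero-divisors. The only cosmetic difference is that the paper spells out the prime-power reason for the vacuity of $n=5$, whereas you cite the remark preceding Table~\ref{table of local rings}.
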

	\begin{proof}
		
		Let \( A \) be a finite, commutative local ring with maximal ideal \( \mathfrak{m} \). By Corollary~\ref{fcg} we have \( EG(A) \cong K_n \), where \( n = |\mathfrak{m}^*| \). According to Theorem~\ref{pegl}, the line graph \( L(EG(A)) \) is planar precisely when \( n \leq 4 \). 
		If \( n \geq 6 \), then Lemma~\ref{glkn} implies that the genus of \( L(EG(A)) \) satisfies \( \gamma(L(EG(A))) \geq 3 \). 
		Now consider the case \( n = 5 \). This would require \( |\mathfrak{m}| = 6 \). However, this is not possible as $A$ being a local ring, its order must be a power of a prime, and so must the size of its maximal ideal. Since 6 is not a power of any prime, such a local ring cannot exist. This completes the proof
		
	\end{proof}

	For the non-local rings case, we have the following result about the genus of the line graph associated to the zero-divisor graph. 
	
	\begin{lem}\label{gzg} \textnormal{\cite{chi-hsi}}
		Given a  finite non-local commutative ring $A,$ the following holds:
		\begin{enumerate}[label=\textnormal{(\roman*).}]
			
			\item The genus of $L(\Gamma(A))$ is 1 if and only if $A$ is a reduced ring and isomorphic to\,  
			$\mathbb F_4\times \mathbb F_4,$\,
			$\mathbb Z_3\times \mathbb Z_5$,\,
			$\mathbb Z_2\times \mathbb Z_7$,\,or\, 
			$\mathbb Z_2\times \mathbb F_8.$
			\vspace{0.1cm}
			\item The genus of $L(\Gamma(A))$ is 2 if and only if either of the following holds;
			\begin{enumerate}[label=\textnormal{(\alph*).}]
				\vskip.2cm
				\item $A$ is reduced ring and  isomorphic to one of the following rings: $\mathbb F_4 \times \mathbb Z_5,$\,
				$\mathbb Z_3\times \mathbb Z_7$,\,
				$\mathbb Z_2\times \mathbb F_9$,\, or 
				$\mathbb Z_2 \times \mathbb Z_2 \times \mathbb Z_3$.
				\vskip.1cm
				\item $A$ is non-reduced ring and  isomorphic to one of the following rings: 
				$\mathbb F_4 \times \mathbb Z_4,$\,\\
				$\mathbb Z_2\times \mathbb Z_8,$\,
				$\mathbb F_4 \times \frac{ \mathbb Z_2[x]}{\langle x^2 \rangle},$\,
				$\mathbb Z_2\times  \frac{\mathbb Z_2[x]}{\langle x^3 \rangle}$\,or\,  
				$\mathbb Z_2\times  \frac{\mathbb Z_4[x]}{\langle x^3, x^2 - 2 \rangle}.$ 
				
			\end{enumerate}
		\end{enumerate}
	\end{lem}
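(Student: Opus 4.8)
The plan is to run the argument in three stages: a reduction cutting the number of local factors of $A$ down to two or three while bounding their orders, an exact computation of $\gamma(L(\Gamma(A)))$ for the finitely many surviving rings, and a final sorting that places the rings of line-graph genus exactly $1$ into part~(i) and those of genus exactly $2$ into part~(ii), discarding the planar ones already listed in Lemma~\ref{pzg}.

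For the reduction, write $A \cong A_1 \times \cdots \times A_n$ with each $A_i$ local and $n \ge 2$, and exploit the basic fact that the edges of $\Gamma(A)$ incident to a fixed vertex of degree $d$ induce a $K_d$ in $L(\Gamma(A))$, so $\gamma(L(\Gamma(A))) \ge \gamma(K_d)$, sharpened by Lemmas~\ref{lm2} and \ref{lm3}, which combine the degrees of two or three vertices. If $n \ge 4$, the $n$ elements supported on a single coordinate each have degree at least $2^{n-1}-1 \ge 7$ in $\Gamma(A)$, so $\gamma(L(\Gamma(A))) \ge 3$ by Lemma~\ref{lm3}; hence $n \in \{2,3\}$. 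For $n = 3$, testing the vertices $(1,0,0),(u,0,0),(0,1,0),(1,1,0),(0,0,1)$ --- with $u$ a non-identity unit of $A_1$, or a nonzero nilpotent of $A_1$ when $A_1$ is not a field --- produces enough high-degree vertices to leave only $A \cong \mathbb{Z}_2 \times \mathbb{Z}_2 \times \mathbb{Z}_2$ and $A \cong \mathbb{Z}_2 \times \mathbb{Z}_2 \times \mathbb{Z}_3$. For $n = 2$ write $A \cong A_1 \times A_2$: when both factors are fields, $\Gamma(A) \cong K_{|A_1|-1,\,|A_2|-1}$ by Lemmas~\ref{complete bipartite} and \ref{Zero divisor graph = essential graph}, so the lower bounds for $\gamma(L(K_{m,n}))$ from Lemma~\ref{glkn}, Proposition~\ref{genus and crosscap of Kn and Knm} and Remark~\ref{remark for genus of some bipartite graphs} restrict $(|A_1|,|A_2|)$ to finitely many pairs; when, say, $A_2$ is not a field, fix a nonzero nilpotent $a \in A_2$ with $\operatorname{ann}_{A_2}(a) = \mathfrak{m}_2$ and bound $|A_2^\times|$, $|\mathfrak{m}_2^*|$ and $|A_1|$ by examining the degrees of $(1,0)$ and of the nilpotent elements (cf.\ Lemma~\ref{cg}), again leaving only finitely many rings.

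In the second stage the reduced survivors give $\Gamma(A) \in \{K_{1,m}, K_{2,m}, K_{3,m}\}$, whose line-graph genera are read off from $L(K_{1,m}) = K_m$ via Proposition~\ref{genus and crosscap of Kn and Knm}, from Lemma~\ref{glkn}(ii) for $L(K_{2,m})$, and from Remark~\ref{remark for genus of some bipartite graphs} for the small $K_{3,m}$'s; this gives genus $1$ for $\mathbb{F}_4 \times \mathbb{F}_4$, $\mathbb{Z}_3 \times \mathbb{Z}_5$, $\mathbb{Z}_2 \times \mathbb{Z}_7$, $\mathbb{Z}_2 \times \mathbb{F}_8$ and genus $2$ for $\mathbb{F}_4 \times \mathbb{Z}_5$, $\mathbb{Z}_3 \times \mathbb{Z}_7$, $\mathbb{Z}_2 \times \mathbb{F}_9$, while $\mathbb{Z}_2 \times \mathbb{Z}_2 \times \mathbb{Z}_3$ is treated directly. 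For the non-reduced survivors $\mathbb{F}_4 \times \mathbb{Z}_4$, $\mathbb{Z}_2 \times \mathbb{Z}_8$, $\mathbb{F}_4 \times \mathbb{Z}_2[x]/\langle x^2 \rangle$, $\mathbb{Z}_2 \times \mathbb{Z}_2[x]/\langle x^3 \rangle$ and $\mathbb{Z}_2 \times \mathbb{Z}_4[x]/\langle x^3, x^2 - 2 \rangle$ one writes out $\Gamma(A)$ explicitly, obtains $\gamma(L(\Gamma(A))) \ge 2$ either from Lemma~\ref{lm2} (a vertex of degree $\ge 8$, or vertices of degrees $\ge 6$ and $\ge 7$) or, when a single clique does not suffice, from Lemma~\ref{lm1} applied to a suitable planar $3$-connected subgraph of $L(\Gamma(A))$ --- e.g.\ a triangular prism --- all of whose vertices meet a fixed $K_6$; the matching upper bound $\le 2$ follows from an explicit embedding of $L(\Gamma(A))$ on the double torus, and Lemma~\ref{pzg} rules out planarity, so the genus equals $2$.

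I expect the main obstacle to be the second stage, and within it the tight genus computations for the borderline rings: drawing clean toroidal and double-toroidal embeddings (equivalently, explicit rotation systems) of $L(\Gamma(\mathbb{Z}_2 \times \mathbb{Z}_2 \times \mathbb{Z}_3))$ and of the five non-reduced line graphs, together with certifying that the genus cannot be one smaller in the cases where no single vertex clique already forces the bound. The lower bounds from Lemmas~\ref{lm1}--\ref{lm3}, Lemma~\ref{glkn} and the tabulated cases in Remark~\ref{remark for genus of some bipartite graphs} are largely mechanical, but matching them needs either hand-built embeddings or an appeal to the precise formula for $\gamma(L(K_{m,n}))$; keeping the $n = 2,3$ case analysis of the reduction exhaustive --- in particular eliminating every larger reduced product such as $\mathbb{Z}_5 \times \mathbb{Z}_5$ and every larger non-reduced product --- is the other point demanding care.
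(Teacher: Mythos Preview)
This lemma is not proved in the paper: it is quoted verbatim from \cite{chi-hsi} (Chiang-Hsieh, Lee, Wang) and used as a black box, so there is no ``paper's own proof'' to compare against. Your outline is a reasonable sketch of how the argument in \cite{chi-hsi} actually runs --- reduction on the number and size of local factors via degree/clique bounds in $L(\Gamma(A))$, followed by exact genus computations for the finitely many survivors --- but for the purposes of this paper no proof is required beyond the citation.
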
  
	
	\newpage
	The following lemma is needed for the proof of the Lemma~\ref{glz34}.
	
	\begin{lem}\label{genus of K_1,2,m}
		For an integer \(m \geq 4,\) we have 
		
		\begin{enumerate}[label=\textnormal{(\roman*).}]
			\item \(\gamma(L(K_{1,2,m})) > \gamma(L(K_{2,m+1}))\)
			
			\item \(\overline{\gamma}(L(K_{1,2,m})) > \overline{\gamma}(L(K_{2,m+1}))\)
		\end{enumerate}   
	\end{lem}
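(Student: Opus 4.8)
The plan is to realise $L(K_{2,m+1})$ as a connected induced subgraph of $G:=L(K_{1,2,m})$, with the leftover vertices forming a planar $3$-connected piece attached to it, and then to apply Lemma~\ref{lm1}. Write $K_{1,2,m}$ with parts $X=\{x\}$, $Y=\{y_1,y_2\}$, $Z=\{z_1,\dots,z_m\}$. Its edge set splits as $F\cup E'$, where $F=\{y_jx,\ y_jz_i:\ j\in\{1,2\},\ 1\le i\le m\}$ and $E'=\{xz_1,\dots,xz_m\}$. The spanning subgraph of $K_{1,2,m}$ with edge set $F$ is complete bipartite with parts $Y$ and $X\cup Z$, hence isomorphic to $K_{2,m+1}$; since adjacency in a line graph (``sharing an endpoint'') is unaffected by passing to a subgraph, the induced subgraph $H_2$ of $G$ on the vertex set $F$ is an isomorphic copy of $L(K_{2,m+1})$. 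As $K_{2,m+1}$ is connected with at least two edges, $H_2$ is connected.

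Next I would examine the remaining vertices $E'=\{xz_1,\dots,xz_m\}$ of $G$: any two of them share the endpoint $x$ of $K_{1,2,m}$, so $E'$ induces a clique $K_m$ in $G$, and each $xz_i$ is adjacent in $G$ to $y_1x\in F=V(H_2)$. Now the hypothesis $m\ge4$ enters: let $H_1$ be the $K_4$ induced on $\{xz_1,xz_2,xz_3,xz_4\}$. Then $H_1$ is planar and $3$-connected, is vertex-disjoint from $H_2$, and every vertex of $H_1$ is adjacent in $G$ to $y_1x\in V(H_2)$. Since $G=L(K_{1,2,m})$ is connected (it is the line graph of the connected graph $K_{1,2,m}$), Lemma~\ref{lm1} applies with this $H_1$ and $H_2$ and yields $\gamma(G)>\gamma(H_2)=\gamma(L(K_{2,m+1}))$ and $\overline{\gamma}(G)>\overline{\gamma}(H_2)=\overline{\gamma}(L(K_{2,m+1}))$, which are exactly (i) and (ii).

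I do not expect a genuine obstacle here: the argument is structural rather than computational, and the only place where $m\ge4$ is used is to produce a $3$-connected planar $K_4$ among the four extra line-graph vertices — a triangle $K_3$ is only $2$-connected, so the case $m=3$ is correctly outside the scope of this argument. The two facts I would double-check are the standard ones that the line graph of a connected graph with at least two edges is connected, and that $K_4$ is $3$-connected; both are immediate.
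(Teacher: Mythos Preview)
Your proof is correct and uses the same key tool as the paper, namely Lemma~\ref{lm1} with $H_1\cong K_4$, but your decomposition of $L(K_{1,2,m})$ is slightly different and in fact more direct. The paper deletes a vertex from the part of size~$2$, obtaining $K_{1,1,m}$, takes $H_2=L(K_{1,1,m})$, and then invokes the additional identities $\gamma(L(K_{1,1,m}))=\gamma(L(K_{2,m+1}))$ and $\overline{\gamma}(L(K_{1,1,m}))=\overline{\gamma}(L(K_{2,m+1}))$ from Lemmas~\ref{glkn}(iii) and~\ref{clkn}(iii). You instead delete the edges between the singleton part and the size-$m$ part, so that the remaining edges already form a $K_{2,m+1}$ and $H_2\cong L(K_{2,m+1})$ directly; this avoids the detour through $K_{1,1,m}$ and the appeal to those extra lemmas. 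Both arguments are valid, and both need $m\ge4$ only to extract a $3$-connected $K_4$ for $H_1$.
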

	
	\begin{proof}
		We use Lemma \ref{lm1} to  establish the inequalities.  Let \(x\) be a vertex in \(K_{1,2,m}\) of degree \(m+1\). Since \(m \geq 4\), we can select four edges incident to \(x\), forming a \(K_4\) subgraph in the line graph \(L(K_{1,2,m})\). Observe that removing \(x\) from \(K_{1,2,m}\) result in the graph \(K_{1,1,m}\). Hence, the line graph \(L(K_{1,2,m})\) contains two disjoint connected subgraphs
		\(
		G_1 \cong K_4 \quad \text{and} \quad G_2 \cong L(K_{1,1,m}).
		\) 
		We now verify the hypotheses of Lemma~\ref{lm1}. The subgraph \(G_1\), isomorphic to \(K_4\), is both planar and 3-connected. Moreover, each edge incident to \(x\) in \(K_{1,2,m}\) has an endpoint that is also an endpoint of some edge in \(K_{1,1,m}.\) Therefore, by the definition of the line graph, each vertex of \(G_1\) is adjacent to some vertex in \(G_2 = L(K_{1,1,m}).\) Thus, the conditions of Lemma~\ref{lm1} are satisfied.		
		By applying Lemma~\ref{lm1} along with Lemma~\ref{glkn} and \ref{clkn}(iii), we obtain the following inequalities: $
		\gamma(L(K_{1,2,m})) > \gamma(L(K_{1,1,m})) =  \gamma(L(K_{2,m+1}))$ and $\overline{\gamma}(L(K_{1,2,m})) > \overline{\gamma}(L(K_{1,1,m})) = \overline{\gamma}(L(K_{2,m+1})).$ 	Thus, we conclude that 
		$\gamma(L(K_{1,2,m})) > \gamma(L(K_{2,m+1})) \quad \text{and} \quad \overline{\gamma}(L(K_{1,2,m})) > \overline{\gamma}(L(K_{2,m+1})).
		$
	\end{proof}

	We now prove two lemmas essential for the main results.
	
	\begin{lem}\label{glz34}
		If $A \cong \mathbb Z_{3} \times \mathbb Z_{4},$ then the genus and crosscap of the line graph $L(EG(A))$ are at least 3. 
		
	\end{lem}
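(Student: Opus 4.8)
The plan is to first identify the isomorphism type of $EG(\mathbb{Z}_3\times\mathbb{Z}_4)$ and then reduce everything to results already in the excerpt. Write $A=\mathbb{Z}_3\times\mathbb{Z}_4$. The units of $A$ are $\{1,2\}\times\{1,3\}$, so $Z(A)^*$ consists of the seven elements $(0,1),(0,2),(0,3),(1,0),(2,0),(1,2),(2,2)$, and the only nonzero nilpotent is $(0,2)$ (since $2^2=0$ in $\mathbb{Z}_4$ while $\mathbb{Z}_3$ is reduced). By Lemma~\ref{cg}(i), the vertex $(0,2)$ is adjacent to every other vertex of $EG(A)$.

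Next I would work out the remaining adjacencies. Every ideal of $A$ has the form $I_1\times I_2$ with $I_1\in\{0,\mathbb{Z}_3\}$ and $I_2\in\{0,\langle 2\rangle,\mathbb{Z}_4\}$, and a short check of pairwise intersections shows that the only essential ideals of $A$ are $\mathbb{Z}_3\times\langle 2\rangle$ and $\mathbb{Z}_3\times\mathbb{Z}_4$. Since $\mathrm{ann}_A\big((a_1,b_1)(a_2,b_2)\big)=\mathrm{ann}_{\mathbb{Z}_3}(a_1a_2)\times\mathrm{ann}_{\mathbb{Z}_4}(b_1b_2)$, this annihilator is essential precisely when $a_1a_2=0$ in $\mathbb{Z}_3$ and $b_1b_2\notin\mathbb{Z}_4^\times$; equivalently, $(a_1,b_1)\sim(a_2,b_2)$ iff ($a_1=0$ or $a_2=0$) and ($b_1\in\{0,2\}$ or $b_2\in\{0,2\}$). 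Applying this criterion to the seven vertices gives: no edge inside $\{(0,1),(0,3)\}$, no edge inside $\{(1,0),(2,0),(1,2),(2,2)\}$, and every edge between any two of the three blocks $\{(0,2)\}$, $\{(0,1),(0,3)\}$, $\{(1,0),(2,0),(1,2),(2,2)\}$. Hence $EG(A)\cong K_{1,2,4}$.

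Finally I would invoke the embedding estimates: by Lemma~\ref{genus of K_1,2,m} with $m=4$ we get $\gamma(L(EG(A)))=\gamma(L(K_{1,2,4}))>\gamma(L(K_{2,5}))$ and $\overline{\gamma}(L(EG(A)))>\overline{\gamma}(L(K_{2,5}))$, while Remark~\ref{remark for genus of some bipartite graphs} gives $\gamma(L(K_{2,5}))=\overline{\gamma}(L(K_{2,5}))=2$; therefore $\gamma(L(EG(A)))\geq 3$ and $\overline{\gamma}(L(EG(A)))\geq 3$.

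The only genuinely non-mechanical step is the structural identification $EG(\mathbb{Z}_3\times\mathbb{Z}_4)\cong K_{1,2,4}$, which requires determining the essential ideals of the product ring and translating the essentiality condition into the stated adjacency rule; after that the result is a direct citation of Lemma~\ref{genus of K_1,2,m} and Remark~\ref{remark for genus of some bipartite graphs}. As a side note, the degree sequence $(6,5,5,3,3,3,3)$ of $K_{1,2,4}$ shows why Lemma~\ref{lm2} alone is insufficient — it only yields $\gamma(L(EG(A)))\geq\gamma(K_6)+\gamma(K_5)=2$ — so the sharper Lemma~\ref{genus of K_1,2,m} is the essential input.
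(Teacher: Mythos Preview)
Your proof is correct and follows essentially the same route as the paper: identify $EG(\mathbb{Z}_3\times\mathbb{Z}_4)\cong K_{1,2,4}$ and then apply Lemma~\ref{genus of K_1,2,m} with $m=4$ together with Remark~\ref{remark for genus of some bipartite graphs}. Your derivation of the adjacency rule via an explicit list of the essential ideals is a bit more systematic than the paper's direct annihilator computations, but the argument is otherwise identical.
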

	
	\begin{proof}
		The vertex set of the graph $EG(A)$ is the set  $\{(0, 1), (0, 2), (0, 3), (1, 0),(2, 0), (1, 2), (2, 2)\}$ of zero-divisors of \(A\) that are nonzero. Note that the product of $(0, 2)$ with $(0, 1)$ or $(0, 3)$ in $A$ gives $(0,2)$ and with the remaining four vertices it is $(0, 0).$ Since $\text{ann}_A(0, 0) = A,$ and   $\text{ann}_A(0, 2) = \mathbb{Z}_3 \times \mathfrak{m},$ where  $\mathfrak{m}$ is the maximal ideal of $\mathbb{Z}_4,$ it follows that $(0, 2)$ is adjacent to all remaining six vertices in $EG(A).$ Similarly, the four vertices $(1, 0),$ $(2, 0),$ $(1, 2),$  $(2, 2)$ are mutually non-adjacent but they are adjacent to the remaining three vertices. Moreover, the two vertices $(0,1)$ and $(0, 3)$ are mutually non-adjacent but are adjacent to the remaining five vertices. Thus, the graph $EG(A)$ is isomprphic to the complete tripartite graph $K_{1,2,4}.$ Then by the above Lemma \ref{genus of K_1,2,m} and Remark~\ref{remark for genus of some  bipartite graphs}, we have $\gamma(L(EG(A))) > \gamma(L(K_{2,5})) =2$ and $\overline{\gamma}(L(EG(A))) >  \overline{\gamma}(L(K_{2,5})) = 2.$ Thus, we obtain $\gamma(L(EG(A))) \geq 3$ and $\overline{\gamma}(L(EG(A))) \geq 3.$
	\end{proof}

	\vspace{5pt}

	\begin{lem}\label{glz24}
		If \( A \cong \mathbb Z_2 \times \mathbb Z_4\), then the genus and crosscap of the line graph of the essential graph of $A$ are both equal to 1.
	\end{lem}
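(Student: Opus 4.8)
The plan is to first pin down $EG(A)$ explicitly, then combine the planarity obstruction already established with two explicit surface embeddings. For Step~1, note that the nonzero zero-divisors of $A=\mathbb Z_2\times\mathbb Z_4$ are $(0,1),(0,2),(0,3),(1,0),(1,2)$, so $EG(A)$ has five vertices. The unique nonzero nilpotent element is $(0,2)$, which by Lemma~\ref{cg}(i) is adjacent to the other four. For the remaining pairs one computes each product and its annihilator: $\mathrm{ann}_A((0,2))=\mathbb Z_2\times\mathfrak m$ (with $\mathfrak m$ the maximal ideal of $\mathbb Z_4$) is essential, whereas $\mathrm{ann}_A((0,3))=\mathbb Z_2\times\{0\}$ and $\mathrm{ann}_A((1,0))=\{0\}\times\mathbb Z_4$ are not, since each misses one of the two minimal ideals of $A$. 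This gives exactly the edges $(0,1)(1,0),(0,1)(1,2),(0,3)(1,0),(0,3)(1,2)$ among the non-nilpotent vertices, and no edge $(0,1)(0,3)$ or $(1,0)(1,2)$. Hence $EG(A)\cong K_{1,2,2}$, the wheel on five vertices, with parts $\{(0,2)\}$, $\{(0,1),(0,3)\}$, $\{(1,0),(1,2)\}$; consequently $L(EG(A))$ has $8$ vertices and $\binom{4}{2}+4\binom{3}{2}=18$ edges.

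For Step~2, since $A$ is non-reduced, Corollary~\ref{planatiry of essential grh for nonreduced} shows $L(EG(A))$ is not planar, so $\gamma(L(EG(A)))\ge 1$ and $\overline{\gamma}(L(EG(A)))\ge 1$. It then remains to prove the matching upper bounds $\gamma(L(EG(A)))\le 1$ and $\overline{\gamma}(L(EG(A)))\le 1$, which I would do by exhibiting explicit crossing-free drawings of $L(K_{1,2,2})$: one on the torus and one on the projective plane, each presented as a figure (equivalently, as a rotation system). Here it is convenient to use the structure of $L(K_{1,2,2})$: the four edges at the apex $(0,2)$ induce a $K_4$ in the line graph, the four rim edges induce a $C_4$, and each rim edge is joined to exactly the two apex-edges meeting it at its endpoints. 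One lays out the $K_4$ in a disc and routes the remaining $12$ line-graph edges across a single handle (respectively a single crosscap). Euler's formula is consistent with this: a torus embedding would have $F=E-V=10$ faces and a projective-plane embedding $F=E-V+1=11$ faces, and in both cases $2E=36\ge 3F$, leaving room for such a layout.

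Combining the two directions yields $\gamma(L(EG(A)))=\overline{\gamma}(L(EG(A)))=1$, as claimed.

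The routine part is Step~1 (the annihilator computations and the identification $EG(A)\cong K_{1,2,2}$). The main obstacle is Step~3: producing the two explicit embeddings and verifying that all $18$ edges of $L(K_{1,2,2})$ can genuinely be accommodated without crossings on the torus and on the projective plane. Because the Euler-count margins are comfortable and the graph is small, these embeddings exist; exhibiting them cleanly (via figures) is the only real work in the proof.
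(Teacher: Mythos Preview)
Your approach is essentially the paper's: identify $EG(A)$ explicitly on the five nonzero zero-divisors, argue non-planarity, and then produce surface drawings. Your Step~1 is in fact more thorough than the paper's (which simply refers to a figure): the identification $EG(A)\cong K_{1,2,2}$, the wheel on five vertices, is correct and a nice way to package the adjacency computations.

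Two points of comparison are worth making. First, for non-planarity the paper does not invoke Corollary~\ref{planatiry of essential grh for nonreduced}; instead it observes directly that deleting the edge between the two degree-$3$ vertices of $EG(A)$ leaves $P_4\vee K_1$, and then applies Theorem~\ref{pl}. Your route via the non-reduced corollary is perfectly valid and arguably cleaner, since that corollary is already available at this point. Second, and more usefully, the paper avoids constructing a torus embedding altogether: it simply draws $L(EG(A))$ in the plane with a single edge-crossing (Figure~\ref{fig of Z2 x z4}(b)), which immediately gives $\gamma\le 1$. This is shorter than your proposed ``$K_4$ in a disc plus route $12$ edges through a handle'' plan, and in fact the same one-crossing drawing also yields $\overline{\gamma}\le 1$ (a single crossing can be removed by one crosscap), though the paper instead exhibits an explicit projective-plane embedding.

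The genuine gap in your write-up is that Step~3 is only promised, not carried out: you note that ``exhibiting them cleanly (via figures) is the only real work,'' and then stop. The Euler-formula checks you give are necessary conditions, not sufficient ones, so they do not substitute for the drawings. To complete the proof you must either supply the two embeddings or, more economically, give a single plane drawing with exactly one crossing (which simultaneously bounds both $\gamma$ and $\overline{\gamma}$ by~$1$). For a graph this small that drawing is easy to find once you know to look for it.
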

	
	\begin{proof} 
		Let \(A \cong \mathbb Z_2 \times \mathbb Z_4\). Then the graph $EG(A)$ has five vertices $\{(0, 1), (0, 2), (0, 3), (1, 0), (1, 2)\},$ which represent the zero-divisors of \(A\) that are nonzero. 
		The structure of \(EG(A)\) and that of its line graph \(L(EG(A))\) are displayed in Figure~\ref{fig of Z2 x z4}(a) and (b), respectively.
		The subgraph of \(EG(A) \) obtained by deleting one edge between two vertices of degree three is isomorphic to \(  P_4 \vee K_1 \). Therefore, by Theorem \ref{pl}, it follows that \(L(EG(A))\) is not planar. However, it is clear from Figure \ref{fig of Z2 x z4}(b) that this line graph has edge crossing number one, implying that \( \gamma(L(EG(A))) = 1 \). Moreover, in Figure \ref{fig of Z2 x z4}(c), we give the projective embedding of \(L(EG(A)) \), which shows that  \( \overline{\gamma}(L(EG(A))) = 1\). 
	\end{proof}
	
	\begin{figure}[h!]
		\begin{center}
			
			\begin{tikzpicture}[scale=0.45, ultra thick, font=\small]
				
				
				
				\filldraw (2,18) circle (3pt);
				\filldraw (2,15) circle (3pt);
				\filldraw (2,12) circle (3pt);
				\filldraw (-2,15) circle (3pt);
				\filldraw (6,15) circle (3pt);		
				
				\draw[line width=0.20mm, black] (2,18) -- (-2,15);
				\draw[line width=0.20mm, black] (2,18) -- (2,15);
				\draw[line width=0.20mm, black] (2,18) -- (6,15);
				\draw[line width=0.20mm, black] (2,15) -- (-2,15);
				\draw[line width=0.20mm, black] (2,15) -- (6,15);
				\draw[line width=0.20mm, black] (2,18) -- (2,12);
				\draw[line width=0.20mm, black] (2,12) -- (-2,15);
				\draw[line width=0.20mm, black] (2,12) -- (6,15);

				
				\node[] at (2,18.5) {\tiny (1,0)};
				\node[] at (2.75,14.5) {\tiny(0,2)};
				\node[] at (2,11.5) {\tiny(1,2)};
				\node[] at (-3,15) {\tiny(0,1)};
				\node[] at (7,15) {\tiny (0,3)};

				
				\node[] at (0,17) {\tiny$e_1$};
				\node[] at (0.5,15.25) {\tiny$e_2$};
				\node[] at (0,13) {\tiny$e_3$};
				\node[] at (4,17) {\tiny$e_4$};
				\node[] at (3.75,15.25) {\tiny$e_5$};
				\node[] at (4,13) {\tiny$e_6$};
				\node[] at (2.35,16.25) {\tiny$e_7$};
				\node[] at (2.35,13.5) {\tiny$e_8$};
				\node[] at (2,10) {\textnormal{(a)}: $EG(\mathbb Z_2 \times \mathbb Z_4) $};

			\end{tikzpicture} 	
			\hspace{50pt}
			\begin{tikzpicture}[scale=0.43, ultra thick, font=\small]
				
				\filldraw (12,18) circle (3pt);	
				\filldraw (12,15) circle (3pt);
				\filldraw (12,12) circle (3pt);	
				\filldraw (16,13.5) circle (3pt);		
				\filldraw (16,16.5) circle (3pt);	
				\filldraw (20,18) circle (3pt);	
				\filldraw (20,15) circle (3pt);	
				\filldraw (20,12) circle (3pt);	
				
				\draw[line width= 0.20mm,black] (12,18) -- (20,18);
				\draw[line width= 0.20mm,black] (12,18) -- (12,15);
				\draw[line width= 0.20mm,black] (12,18) -- (16,16.5);
				\draw[line width= 0.20mm,black] (12,15) -- (20,15);
				\draw[line width= 0.20mm,black] (12,15) -- (12,12);
				\draw[line width= 0.20mm,black] (12,15) -- (16,13.5);
				\draw[line width= 0.20mm,black] (12,15) -- (16,16.5);
				\draw[line width= 0.20mm,black] (12,12) -- (16,13.5);
				\draw[line width= 0.20mm,black] (12,12) -- (20,12);
				\draw[line width= 0.20mm,black] (20,12) -- (16,13.5);
				\draw[line width= 0.20mm,black] (20,15) -- (16,13.5);
				\draw[line width= 0.20mm,black] (20,12) -- (20,15);
				\draw[line width= 0.20mm,black] (20,15) -- (16,16.5);
				\draw[line width= 0.20mm,black] (20,15) -- (20,18);
				\draw[line width= 0.20mm,black] (20,18) -- (16,16.5);
				\draw[line width= 0.20mm,black] (16,16.5) -- (16,13.5);
				\draw[line width= 0.20mm,black] (12,12) .. controls (10.75,13.5) and (10.75,16.5).. (12,18);
				\draw[line width= 0.20mm,black] (20,12) .. controls (21.25,13.5) and (21.25,16.5).. (20,18);

				
				\node[] at (12,18.5) {\tiny $e_1$};
				\node[] at (11.5,15) {\tiny$e_2$};
				\node[] at (12,11.5) {\tiny$e_3$};
				\node[] at (20,18.5) {\tiny$e_4$};
				\node[] at (20.5,15) {\tiny$e_5$};
				\node[] at (20,11.5) {\tiny$e_6$};
				\node[] at (16,17) {\tiny$e_7$};
				\node[] at (16,13) {\tiny$e_8$};
				\node[font= \small] at (16,10) {\textnormal{(b)}: $L(EG(\mathbb Z_2 \times \mathbb Z_4))$ };
				
			\end{tikzpicture} 
		\end{center}
	\end{figure}	
	\begin{figure}[h!]
		\begin{center}
			\begin{tikzpicture}[scale=0.4, ultra thick, font=\tiny]
				
				\draw [line width=0.20mm,black](10,1) circle (5);  
				
				\filldraw (10,6) circle (3pt);
				\filldraw (10,-4) circle (3pt);
				\filldraw (5,1) circle (3pt);
				\filldraw (15,1) circle (3pt);
				\filldraw (6.5,4.5) circle (3pt);
				\filldraw (6.5,-2.5) circle (3pt);
				\filldraw (13.5,4.5) circle (3pt);
				\filldraw (13.5,-2.5) circle (3pt);
				\filldraw (9.4,-0.75) circle (3pt);
				\filldraw (8.5,2.75) circle (3pt);
				\filldraw (11.2,3.8) circle (3pt);
				\filldraw (12.35,1.4) circle (3pt);
				\draw[line width= 0.20mm,black] (10,-4) -- (15,1);
				\draw[line width= 0.20mm,black] (6.5,-2.5) -- (6.5,4.5);
				\draw[line width= 0.20mm,black] (8.5,2.75) -- (6.5,4.5);
				\draw[line width= 0.20mm,black] (8.5,2.75) -- (10,6);
				\draw[line width= 0.20mm,black] (8.5,2.75) -- (11.2,3.8);
				\draw[line width= 0.20mm,black] (8.5,2.75) -- (9.4,-0.75);
				\draw[line width= 0.20mm,black] (11.2,3.8) -- (10,6);
				\draw[line width= 0.20mm,black] (11.2,3.8) -- (13.5,4.5);
				\draw[line width= 0.20mm,black] (11.2,3.8) -- (12.35,1.4);
				\draw[line width= 0.20mm,black] (12.35,1.4) -- (13.5,4.5);
				\draw[line width= 0.20mm,black] (12.35,1.4) -- (9.4,-0.75);
				\draw[line width= 0.20mm,black] (12.35,1.4) -- (15,1);
				\draw[line width= 0.20mm,black] (9.4,-0.75) -- (15,1);
				\draw[line width= 0.20mm,black] (9.4,-0.75) -- (6.5,4.5);

				
				\node[font=\small] at (10,6.5) {$e_5$}; 
				\node[font=\small] at (10,-4.5) {$e_5$}; 
				\node[font=\small] at (4.5,1) {$e_2$}; 
				\node[font=\small] at (15.5,1) {$e_2$}; 
				\node[font=\small] at (6,4.75) {$e_7$}; 
				\node[font=\small] at (6,-2.75) {$e_8$};     
				\node[font=\small] at (14,-2.75) {$e_7$}; 
				\node[font=\small] at (14,4.75) {$e_8$}; 
				\node[font=\small] at (9.4,-1.25) {$e_1$};
				\node[font=\small] at (9,2.5) {$e_4$};
				\node[font=\small] at (11,3.25) {$e_6$};
				\node[font=\small] at (11.8,1.6) {$e_3$};
				
				\node[font= \small] at (10,-6) {\textnormal{(c)}: Projective Embedding of $L(EG(\mathbb Z_2 \times \mathbb Z_4))$};

				
			\end{tikzpicture}
			
			\caption{\textbf{ (a), (b) and (c)}}\label{fig of Z2 x z4}
		\end{center}
	\end{figure}
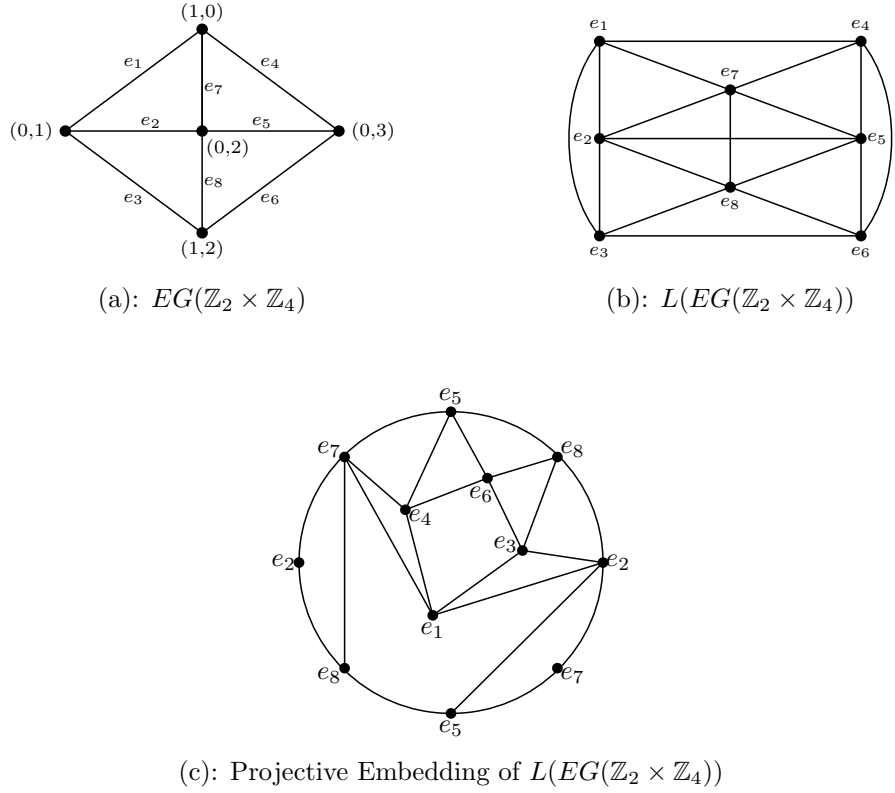
	


	
	\newpage
	We now present the main theorems of this section, which classify the rings whose line graphs \(L(EG(A))\) have genus at most two. We start with the case where the genus equals one.

	\begin{thm}\label{g1leg}
		Let $A$ be a finite non-local commutative ring. Then, the genus of the line graph of the essential graph $EG(A)$ is 1 if and only if $A$ is isomorphic to one of the six rings: 
		$\mathbb Z_2\times \mathbb Z_4,\hspace{.2cm}
		\mathbb Z_2 \times \frac{\mathbb Z_2[x]}{\langle x^2 \rangle},\hspace{.2cm}
		\mathbb F_4 \times \mathbb F_4, \hspace{.2cm}
		\mathbb Z_3 \times \mathbb Z_5,\hspace{.2cm}
		\mathbb Z_2 \times \mathbb Z_7,\hspace{.2cm}
		\mathbb Z_2 \times \mathbb F_8.$
	\end{thm}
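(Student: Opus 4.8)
The plan is to separate the argument according to whether $A$ is reduced, and in the non-reduced case to reduce everything to the already-known classification for zero-divisor line graphs. The observation I would use throughout is that $\Gamma(A)$ is a subgraph of $EG(A)$, hence $L(\Gamma(A))$ is a subgraph of $L(EG(A))$, so $\gamma(L(EG(A))) \ge \gamma(L(\Gamma(A)))$.

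First I would dispose of the reduced case. Writing $A \cong A_1 \times \cdots \times A_n$ with each $A_i$ local and $n \ge 2$, if $A$ is reduced then every $A_i$ is a field and $EG(A) = \Gamma(A)$ by Lemma~\ref{Zero divisor graph = essential graph}; Lemma~\ref{gzg}(i) then yields directly that $\gamma(L(EG(A))) = 1$ exactly when $A$ is one of $\mathbb F_4 \times \mathbb F_4$, $\mathbb Z_3 \times \mathbb Z_5$, $\mathbb Z_2 \times \mathbb Z_7$, $\mathbb Z_2 \times \mathbb F_8$, which are four of the six rings in the statement.

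Next I would treat the non-reduced case. If $A$ is not one of the four rings $\mathbb Z_2 \times \mathbb Z_4$, $\mathbb Z_3 \times \mathbb Z_4$, $\mathbb Z_2 \times \frac{\mathbb Z_2[x]}{\langle x^2 \rangle}$, $\mathbb Z_3 \times \frac{\mathbb Z_2[x]}{\langle x^2 \rangle}$, then by Lemma~\ref{pzg}(ii) the graph $L(\Gamma(A))$ is non-planar, so $\gamma(L(\Gamma(A))) \ge 1$; and since $A$ is non-reduced, Lemma~\ref{gzg}(i) forbids $\gamma(L(\Gamma(A))) = 1$, so in fact $\gamma(L(\Gamma(A))) \ge 2$, and the subgraph inequality gives $\gamma(L(EG(A))) \ge 2 > 1$. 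Thus no such ring occurs, and it remains only to examine the four exceptional rings by hand. For $\mathbb Z_2 \times \mathbb Z_4$, Lemma~\ref{glz24} gives $\gamma(L(EG(A))) = 1$; a direct computation of annihilators shows $EG(\mathbb Z_2 \times \frac{\mathbb Z_2[x]}{\langle x^2 \rangle}) \cong K_{1,2,2} \cong EG(\mathbb Z_2 \times \mathbb Z_4)$, so this ring too has $\gamma(L(EG(A))) = 1$, completing the list of six. For $\mathbb Z_3 \times \mathbb Z_4$, Lemma~\ref{glz34} gives $\gamma(L(EG(A))) \ge 3$, and the same annihilator computation as in the proof of Lemma~\ref{glz34} shows $EG(\mathbb Z_3 \times \frac{\mathbb Z_2[x]}{\langle x^2 \rangle}) \cong K_{1,2,4} \cong EG(\mathbb Z_3 \times \mathbb Z_4)$, so that ring likewise has genus at least $3$; neither of these two contributes.

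The only step demanding genuine care, as opposed to routine bookkeeping, is the case split in the non-reduced situation: I must be certain that the rings for which the bound $\gamma(L(EG(A))) \ge \gamma(L(\Gamma(A)))$ fails to be decisive are precisely the four non-reduced non-local rings with planar zero-divisor line graph listed in Lemma~\ref{pzg}(ii), and then identify their essential graphs explicitly — two being $K_{1,2,2}$ and two being $K_{1,2,4}$ — so that Lemmas~\ref{glz24} and~\ref{glz34} close the argument. Everything else reduces to monotonicity of genus under subgraphs together with the previously recorded genus values.
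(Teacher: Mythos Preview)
Your proof is correct and takes a genuinely different route from the paper's. The paper proceeds by a direct structural case analysis: it first bounds the number of local factors $n$ by exhibiting two vertices of degree at least $5$ when $n\ge 4$ (invoking Lemma~\ref{lm2}), then for $n=3$ forces each $|A_i|=2$, and for $n=2$ runs separate subcases according to which factors are fields, each time locating an explicit $K_{3,4}$ subgraph in $EG(A)$ to push the genus past $1$. Your argument sidesteps all of this by exploiting the monotonicity $\gamma(L(EG(A)))\ge\gamma(L(\Gamma(A)))$ together with the full force of Lemmas~\ref{pzg} and~\ref{gzg}(i): since no non-reduced non-local ring has $\gamma(L(\Gamma(A)))=1$, any non-reduced $A$ with non-planar $L(\Gamma(A))$ is immediately excluded, leaving only the four rings from Lemma~\ref{pzg}(ii) to check by hand via Lemmas~\ref{glz24} and~\ref{glz34}.

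Your approach is considerably more economical and makes clear that the genus-one classification for $L(EG(A))$ is almost entirely inherited from the zero-divisor case, with the only new content being the explicit identification of the essential graphs of the four exceptional rings as $K_{1,2,2}$ and $K_{1,2,4}$. The paper's approach, by contrast, is more self-contained and does not lean on the full statement of Lemma~\ref{gzg}(i) for non-reduced rings; it reproves the relevant exclusions directly inside $EG(A)$, which has the minor advantage of exhibiting concrete obstructions but at the cost of repeated ad hoc constructions.
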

	
	\begin{proof}
		Since \( A \) is a non-local finite ring, we can write \( A \cong A_1 \times A_2 \times \cdots \times A_n \), where each \( A_i \) is a finite local ring and \( n \geq 2 \). Let us assume that \(\gamma(L(EG(A))) = 1\). Suppose, for contradiction, that \(n \geq 4\). Consider the vertices 
		\( s_1 = (1,0,0,0,\ldots,0),\;
		s_2 = (0,1,0,0,\ldots,0),\;
		s_3 = (0,0,1,0,\ldots,0),\;
		s_4 = (0,0,0,1,0,\ldots,0),\; 
		s_5 = (0,1,1,0,\ldots,0),\;
		s_6 = (0,0,1,1,0,\ldots,0),\;
		s_7 = (1,0,1,0,\ldots,0).\)
		We observe that \(s_1\) is adjacent to \(s_i\) for all \(2 \leq i \leq 6\), and \(s_2\) is adjacent to \(s_j\) for \(j \in \{1, 3, 4, 6, 7\}\), since their pairwise products are zero  and \( \mathrm{ann}_A(0) = A \).
		Thus, both \(s_1\) and \(s_2\) have degree at least 5 in \(EG(A)\). Applying Lemma~\ref{lm2}, it follows that $\gamma(L(EG(A))) \geq \gamma(K_5) + \gamma(K_5) = 2$, which contradicts the assumption. Hence, we must have \(n \leq 3\). 
		
		Suppose \(n = 3\), and assume that \(|A_i| \geq 3\) for some index \(i\).  Assume, for instance, that
		$|A_1| \geq 3$. Consider the vertices
		\(s_1 = (1,0,0)\),\; 
		\(s_2 = (0,1,0)\),\;
		\(s_3 = (0,0,1)\),\;
		\(s_4 = (v,0,0)\),\;
		\(s_5 = (1,0,1)\),\; 
		\(s_6 = (v,0,1)\),\;
		\(s_7 = (1,1,0)\)\; and 
		\(s_8 = (v,1,0)\), where \(v \in A_1^\times \setminus \{1\}.\) Note that \(s_2s_i =0 \)  for \(i \in \{1, 3, 4, 5, 6\}\) and \(s_3s_j = 0\) for \(j \in \{ 1, 2, 4, 7, 8\}\), furthermore \( \mathrm{ann}_A(0) = A \) is essential. Thus, both \(\deg(s_2)\) and \(\deg(s_3)\) are at least 5. By Lemma~\ref{lm2}, it follows that \(
		\gamma(L(EG(A))) \geq \gamma(K_5) + \gamma(K_5) = 2,
		\)
		leading to a contradiction. Hence, \(|A_i| \leq 2\) for all \(i\), which implies \(A \cong \mathbb{Z}_2 \times \mathbb{Z}_2 \times \mathbb{Z}_2\). However, by Theorem~\ref{pegnl}, in this case \(L(EG(A))\) is planar, again leading to a contradiction.
		
		Now, suppose \(n = 2\). Let us begin with the assumption that both \(A_1\) and \(A_2\) are not fields. Then, for each \(i = 1,2\), we have \(|A_i| \geq 4\) and \(|\mathfrak{m}_i^*| \geq 1\). Moreover, for each \(i\), there exists \(a_i \in \mathfrak{m}_i^*\) such that \(\text{ann}_{A_i}(a_i) = \mathfrak{m}_i\), implying \(a_i^2 = 0\). Consider the set of vertices
		\(
		S = \{s_1, s_2, s_3, r_1, r_2, r_3, r_4\},
		\)
		where \(s_1 = (1,0)\), \(s_2 = (u_1,0)\), \(s_3 = (a_1,0)\), \(r_1 = (0,1)\), \(r_2 = (0,u_2)\), \(r_3 = (0,a_2)\), and \(r_4 = (a_1,1)\), with \( u_i \in A_i^\times \setminus \{1\}\). We observe that \(s_i r_k = 0\) for all \(i \neq k\) and \(1\leq i,k \leq 3\) and \(s_3 r_4 = 0\). Moreover, for \(l \in\{ 1,2\},\)  we have \(\text{ann}_A(s_l r_4) = \mathfrak{m}_1 \times A_2\), which is an essential ideal. Therefore, the subgraph of \(EG(A)\) induced by \(S\) contains a copy of \(K_{3,4}\). By Remark~\ref{remark for genus of some bipartite graphs}, it follows that \(\gamma(L(EG(A))) > 1\), leading to a contradiction. Thus, at least one of the \(A_i\) must be a field.

		Now, consider the case where \( A_1 \) is a field while \( A_2 \) is not a field. We first prove that $|\mathfrak{m}_2^* | =1.$  Suppose $|\mathfrak{m}_2^*| \geq 2$. Then we have $|A_2^{\times}| \geq 4$ and there exist $a, b \in \mathfrak{m}_2^*$  with \(ab = 0\) and \(\mathrm{ann}_{A_2}(a) = \mathfrak{m}_2\),  implying \(a^2 = 0\).
		Consider the vertices \(
		s_1 = (0,a),\;
		s_2 = (1,a),\;
		r_1 = (0,b),\;
		r_2 = (0,1),\;
		r_3 = (0,u_1),\;
		r_4 = (0,u_2),\;
		r_5 = (0,u_3),
		\) where \( u_1, u_2, u_3 \in A_1^\times \setminus \{1\} \). Observe that for each \( i \in \{1,2\} \), the annihilator \(s_i r_1 = 0 \), and for \( 2 \leq j \leq 5 \), we have \( \mathrm{ann}_A(s_i r_j) = A_1 \times \mathfrak{m}_2\), which is essential. This ensures that \( \deg(s_i) \geq 5 \) for both \( i\in \{1,2\}.\) Consequently, applying Lemma~\ref{lm2}, we obtain that
		\(
		\gamma(L(EG(A))) \geq \gamma(K_5) + \gamma(K_5) = 2,
		\)
		which contradicts the assumption. Thus, \( |\mathfrak{m}_2^*| = 1 \), implying \( A_2 \cong \mathbb{Z}_4 \) or $\frac{\mathbb{Z}_2[x]}{\langle x^2 \rangle}$. 
		Next, suppose \( |A_1| \geq 3 \). Let us define the set
		\(
		S' = \{s_1, s_2, s_3, r_1, r_2, r_3, r_4\},
		\)
		where
		\( s_1 = (0,1),\;
		s_2= (0,u_1),\;
		s_3 = (0,a),\;
		r_1 = (1,0),\;
		r_2 = (u_2,0),\;
		r_3 = (1,a),\;
		r_4 = (u_2,a),
		\)
		with \( u_i \in A_i^\times \setminus \{1\}.\) Observe that \(s_i r_k = 0\) for \(i \in \{1,2,3\}\) and \(k \in \{ 1,2\}\), and \(s_3 r_j = 0\) for \(j \in\{3,4\}\). Additionally, for \(l \in \{ 1,2\}\) and \(m \in \{  3,4\}\), we have \(\text{ann}_A(s_l r_m) = A_1 \times \mathfrak{m}_2\), which is essential. Therefore, the induced subgraph of \( EG(A) \) on \( S' \) contains \( K_{3,4} \) as subgraph, and therefore, by Remark~\ref{remark for genus of some bipartite graphs}, we have 
		\(
		\gamma(L(EG(A))) \geq \gamma(L(K_{3,4})) = 2,
		\)
		again contradicting the assumption. Hence, we conclude that \( |A_1| = 2 \), so \( A_1 \cong \mathbb{Z}_2 \). The proof holds similarly if $A_2$ is a field and $A_1$ is not a field. Thus, the only possible rings in this case are $A \cong \mathbb{Z}_2 \times \mathbb{Z}_4$ or $\mathbb{Z}_2 \times \frac{\mathbb{Z}_2[x]}{\langle x^2 \rangle}.$
		
		Suppose that both \(A_1\) and \(A_2\) are fields. In this case, by Lemma~\ref{Zero divisor graph = essential graph}, we have \(EG(A) = \Gamma(A)\).
		Consequently, it follows from Lemma~\ref{gzg}(i) that
		\(
		A \cong \mathbb{F}_4 \times \mathbb{F}_4,\;
		\mathbb{Z}_3 \times \mathbb{Z}_5,\;
		\mathbb{Z}_2 \times \mathbb{Z}_7,\; \text{or}\;
		\mathbb{Z}_2 \times \mathbb{F}_8.
		\)
		
		Conversely, if \( A \cong \mathbb{Z}_2 \times \mathbb{Z}_4 \) or \( \mathbb{Z}_2 \times \frac{\mathbb{Z}_2[x]}{\langle x^2 \rangle} \). We can observe that in both cases, the essential graphs coincide. Therefore, by Lemma~\ref{glz24}, we obtain \( \gamma(L(EG(A))) = 1 \). 
		Now, consider the case where \( A \cong \mathbb{F}_4 \times \mathbb{F}_4 \), \( \mathbb{Z}_3 \times \mathbb{Z}_5 \), \( \mathbb{Z}_2 \times \mathbb{Z}_7 \), or \( \mathbb{Z}_2 \times \mathbb{F}_8 \). For each of these rings, Lemma~\ref{Zero divisor graph = essential graph} shows that the essential graph \( EG(A) \) is equal to the zero-divisor graph \( \Gamma(A) \). Then, by Lemma~\ref{gzg}(i), we have \( \gamma(L(EG(A))) = 1 \).
	\end{proof}
	\vspace{2pt}
	
	Next, we prove that when the genus of the line graph $L(EG(A)$ is two, then $A$ is a reduced ring and $EG(A) 	= \Gamma(A).$ Thus, from Lemma \ref{gzg}(ii), we get  the  characterization of rings $A$ for which the genus of the graph $L(EG(A)$ is 2.

	\begin{thm}\label{g2leg}
		Let $A$ be a finite non-local commutative ring. Then, the genus of the line graph of the essential graph $EG(A)$ is 2 if and only if $A$ is isomorphic to one of the rings: $\mathbb F_4\times \mathbb Z_5,\hspace{.2cm}
		\mathbb Z_3\times \mathbb Z_7,\hspace{.2cm}
		\mathbb Z_2\times \mathbb Z_9,\hspace{.2cm}
		\mathbb Z_2 \times \mathbb Z_2 \times \mathbb Z_3.$
	\end{thm}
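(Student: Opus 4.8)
The plan is to mirror the structure of the proof of Theorem~\ref{g1leg}. Write $A \cong A_1 \times \cdots \times A_n$ with each $A_i$ local and $n \geq 2$, denote by $\mathfrak{m}_i$ the maximal ideal of $A_i$, and assume $\gamma(L(EG(A))) = 2$. The key analytic tool throughout is Lemma~\ref{lm3}: it suffices to exhibit three distinct vertices of $EG(A)$, two of degree at least $5$ and one of degree at least $7$, to reach the contradiction $\gamma(L(EG(A))) \geq 3$. Two elementary degree facts are used repeatedly: (a) every nonzero nilpotent $z$ of $A$ is adjacent to all other vertices of $EG(A)$ by Lemma~\ref{cg}, so $\deg(z) = |Z(A)^*| - 1$; and (b) the coordinate idempotent $e_i = (0,\dots,1,\dots,0)$ is adjacent to exactly the $v$ whose $i$-th coordinate lies in $\mathfrak{m}_i$, so $\deg(e_i) = |\mathfrak{m}_i|\prod_{j\neq i}|A_j| - 1$, and more generally, for $A = B \times C$ with $B$ a field and $y \in C^{\times}$, the vertex $(0,y)$ has $\deg((0,y)) = |B|\cdot|\mathfrak{m}_C| - 1$.

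First I would bound $n$. If $n \geq 4$, the three coordinate idempotents $e_1,e_2,e_3$ each have degree $\geq 2^{\,n-1}-1 \geq 7$, so Lemma~\ref{lm3} forces $\gamma \geq 3$; hence $n \leq 3$. If $n = 3$ and some $A_i$ is not a field (so $|A_i| \geq 4$ and $|\mathfrak m_i| \geq 2$), then the two idempotents supported off that coordinate together with a nonzero nilpotent supported on it yield, via a short size count, three vertices of degree $\geq 7$, again giving $\gamma \geq 3$; therefore when $n = 3$ every $A_i$ is a field, $A$ is reduced, Lemma~\ref{Zero divisor graph = essential graph} gives $EG(A) = \Gamma(A)$, and Lemma~\ref{gzg}(ii) forces $A \cong \mathbb{Z}_2 \times \mathbb{Z}_2 \times \mathbb{Z}_3$.

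The main case is $n = 2$, say $A = A_1 \times A_2$. If both factors are fields then $EG(A) = \Gamma(A)$ and Lemma~\ref{gzg}(ii) gives $A \cong \mathbb{F}_4 \times \mathbb{Z}_5$, $\mathbb{Z}_3 \times \mathbb{Z}_7$, or $\mathbb{Z}_2 \times \mathbb{F}_9$. If both factors fail to be fields, then $\mathfrak{m}_1 \times \mathfrak{m}_2$ contains at least three nonzero nilpotents, each of degree $|Z(A)^*| - 1 \geq 10$, so $\gamma \geq 3$, a contradiction. The remaining subcase -- exactly one factor, say $A_2$, is not a field -- is where the bulk of the work lies and is the expected main obstacle. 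Recalling that the only non-field local ring of order less than $8$ has order $4$, I would split on $|A_1|$: when $|A_1| \geq 4$, or when $|A_1| \in \{2,3\}$ and $|A_2| \geq 8$, a case-by-case count using the degree formulas (a), (b), the classification of orders of local rings (in particular $|\mathfrak{m}_2| \geq 4$ when $|A_2| \geq 8$ is a power of $2$), and, in the borderline instances, the nilpotent vertices to supply the degree-$7$ vertex, produces the required triple of distinct vertices and forces $\gamma \geq 3$. This leaves only the four small rings $\mathbb{Z}_2 \times \mathbb{Z}_4$, $\mathbb{Z}_2 \times \frac{\mathbb{Z}_2[x]}{\langle x^2\rangle}$, $\mathbb{Z}_3 \times \mathbb{Z}_4$, $\mathbb{Z}_3 \times \frac{\mathbb{Z}_2[x]}{\langle x^2\rangle}$; the first two have $\gamma(L(EG(A))) = 1$ by Lemma~\ref{glz24} (the essential graphs of the two rings in each pair coincide, since $\frac{\mathbb{Z}_2[x]}{\langle x^2\rangle}$ and $\mathbb{Z}_4$ have the same multiplication on zero-divisors), and the last two have $\gamma(L(EG(A))) \geq 3$ by Lemma~\ref{glz34} (again using that the essential graph is $K_{1,2,4}$ in both cases). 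Hence none of these four has genus $2$, completing the forward direction.

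For the converse, each of $\mathbb{F}_4 \times \mathbb{Z}_5$, $\mathbb{Z}_3 \times \mathbb{Z}_7$, $\mathbb{Z}_2 \times \mathbb{F}_9$, $\mathbb{Z}_2 \times \mathbb{Z}_2 \times \mathbb{Z}_3$ is a product of fields, hence reduced, so $EG(A) = \Gamma(A)$ by Lemma~\ref{Zero divisor graph = essential graph}, and Lemma~\ref{gzg}(ii) gives $\gamma(L(EG(A))) = 2$. The only delicate points in the whole argument are bookkeeping: confirming that the three chosen vertices are genuinely distinct in every subcase, and treating the rings involving $\frac{\mathbb{Z}_2[x]}{\langle x^2\rangle}$ by the isomorphism of their essential graphs with those of the corresponding $\mathbb{Z}_4$-rings.
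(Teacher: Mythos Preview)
Your plan is correct and follows the same global architecture as the paper's proof: decompose $A$, bound $n\leq 3$, force all factors to be fields in the surviving cases (so that Lemma~\ref{Zero divisor graph = essential graph} reduces everything to Lemma~\ref{gzg}), and dispose of the four small non-reduced rings via Lemmas~\ref{glz24} and~\ref{glz34}. The tactical difference lies in how the non-reduced cases are eliminated. The paper builds explicit $K_{3,5}$ subgraphs inside $EG(A)$ and invokes Remark~\ref{remark for genus of some bipartite graphs}; you instead compute closed-form degrees for idempotents, nilpotents, and vertices of the shape $(0,y)$, and feed these into Lemma~\ref{lm3}. Your approach is more uniform and avoids the ad~hoc vertex lists, at the cost of a short structural digression (e.g.\ the observation that a non-field local ring of order~$\geq 8$ has $|\mathfrak m|\geq 3$, which underpins the ``$\geq 2$ nilpotents'' count). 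Both routes arrive at exactly the same four residual rings, and your degree formulas are correct---in particular, $e_i$ is adjacent to $v$ precisely when $v_i\in\mathfrak m_i$, since in a finite local ring $\mathrm{ann}(m)$ contains the socle for every $m\in\mathfrak m$ and is therefore essential. One incidental point: you write $\mathbb Z_2\times\mathbb F_9$, which is the correct ring (both factors must be fields); the statement of the theorem in the paper has the typo $\mathbb Z_9$, but Lemma~\ref{gzg}(ii)(a) and the paper's own proof confirm $\mathbb F_9$.
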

	\begin{proof}
		Since \( A \) is a finite non-local ring, it  decomposes into a product \( A \cong A_1 \times A_2 \times \cdots \times A_n \), where each \( A_i \) is a finite local ring with maximal ideal \( \mathfrak{m}_i \) and $n\geq 2$. Assume that \(\gamma(L(EG(A))) = 2.\)
		We aim to show that \( n = 2 \) or \( 3 \). Suppose, to the contrary, that \( n \geq 4.\) Consider the vertices 
		$s_1 = (1,0,0,0,\ldots,0)$, 
		$s_2 = (0,1,0,0,\ldots,0)$, 
		$s_3 = (0,0,1,0,\ldots,0)$,
		$s_4 = (1,0,0,1,0,\ldots,0)$, 
		$s_5 = (0,1,1,0,\ldots,0)$, 
		$s_6 = (0,1,0,1,0,\ldots,0)$, 
		$s_7 = (0,0,1,1,0,\ldots,0)$, 
		$s_8 = (1,0,1,0,\ldots,0)$, 
		$s_9 = (1,0,0,1,0, \ldots, 0)$, 
		$s_{10} = (1,1,0,0,\ldots,0)$,
		$s_{11} = (0,1,1,1,0,\ldots,0)$,
		$s_{12} = (1,0,1,1,0,\ldots,0)$ and 
		$s_{13} = (1,1,0,1,0,\ldots,0).$ 
		Note that \( s_1s_i = 0 \) for \( i \in \{2,3,5,6,7,11\}, \) \( s_2s_j = 0 \) for \( j \in \{ 1,3,4,7,8,9,12\}, \) and \( s_3s_k = 0 \) for \( k \in \{ 1,2,4,6,9,10,13\}.\) Therefore, we find that \( \deg(s_1) \geq 6 \), \( \deg(s_2) \geq 7 \), and \( \deg(s_3) \geq 7 \). Applying Lemma~\ref{lm3}, it follows that \( \gamma(L(EG(A))) \geq 3 \), leading to a contradiction. Thus $ n = 2$ or $ 3.$ 
		
		Suppose \( n = 3 \). We aim to show that each component ring \( A_i \) must be a field. Assume, for instance, that \( A_1 \) is not a field. Then, there exists \( a \in \mathfrak{m}_1^*\) which satiesfies \( \mathrm{ann}_{A_1}(a) = \mathfrak{m}_1.\)
		Consider the vertices $s_1 = (1,0,0)$, $s_2 = (0,1,0)$, $s_3 = (0,0,1)$, $r_1 = (a,0,0)$, $r_2 = (a,1,0)$, $r_3 = (a,0,1)$, $r_4 = (a,1,1)$, $r_5 = (0,1,1)$, $r_6 = (1,0,1)$, $r_7 = (1,1,0)$. We observe that $s_is_j =0$  for distinct \( i, j \in \{1,2,3\} \),  
		\( s_1 r_5 = 0 \),  
		\( s_2 r_h = 0 \) for \( h \in \{1,3,6\} \) and  
		\( s_3 r_w = 0 \) for \( w \in \{1,2,7\} \). Furthermore, for each \( l \in \{1,2,3,4\} \), \(\text{ann}_A(s_1 r_l) = \mathfrak{m}_1 \times A_2 \times A_3,\) is an essential ideal. As a result, we have \( \deg(s_1) \geq 7 \), \( \deg(s_2) \geq 5 \), and \( \deg(s_3) \geq 5 \). Applying Lemma~\ref{lm3}, we obtain \( \gamma(L(EG(A))) \geq 3 \), leading to a contradiction. Since the choice of component ring is arbitrary, we have proved that each \( A_i \) must be a field. Now, by Lemma~\ref{Zero divisor graph = essential graph}, we have \( EG(A) = \Gamma(A) \). Hence, applying Lemma~\ref{gzg}, it follows that \( A \cong \mathbb{Z}_2 \times \mathbb{Z}_2 \times \mathbb{Z}_3.\)
		
		Now, suppose \( n = 2 \).  We aim to show that both \( A_1 \) and \( A_2 \) are fields. Let us first assume both are not fields. Consider the set $S = \{s_1, s_2, s_3, r_1, r_2, r_3, r_4, r_5\}$, where
		$s_1 = (1, 0)$, $s_2 = (u_1, 0)$, 
		$s_3 = (a_1, 0)$, $r_1 = (0, 1)$, 
		$r_2 = (0, u_2)$, $r_3 = (0, a_2)$, 
		$r_4 = (a_1, 1)$, $r_5 = (a_1, u_2)$, where
		$u_i \in A_i^\times$,  and $a_i \in \mathfrak{m}_i^*$ for which \( \mathrm{ann}_{A_i}(a) = \mathfrak{m}_i\) for each $i = 1,2$.  We observe that $s_i r_j = 0$ for $1 \leq i,j \leq 3$, $s_3 r_k = 0$ for \(k \in \{4,5\}\). Furthermore, $\text{ann}_A(s_i r_j) = \mathfrak{m}_1 \times A_2$ for \(i \in \{1,2\}\) and \(j \in \{4,5\},\) is essential. Therefore, the set \( S \) induces a subgraph in \( EG(A) \) that contains \( K_{3,5} \), implying that \( \gamma(L(EG(A))) \geq 3 \) by Remark \ref{remark for genus of some  bipartite graphs}, which leads to a contradiction. Thus, atleast one $A_i$ must be a field.

		Now, consider the case where \( A_1 \) is a field while \( A_2 \) is not a field. We first prove that $|\mathfrak{m}_2^* | =1.$  Suppose $|\mathfrak{m}_2^*| \geq 2$. Then we have $|A_2^{\times}| \geq 4$ and there exist $a, b \in \mathfrak{m}_2^*$  with \(ab = 0\) and \(\mathrm{ann}_{A_2}(a) = \mathfrak{m}_2\),  implying \(a^2 = 0\). Consider the vertex set $S' = \{s_1, s_2, s_3, r_1, r_2, r_3, r_4, r_5\}$, where 
		$s_1 = (1,0)$, $s_2 = (1,a)$,
		$s_3 = (0,a)$, $r_1 = (0,u_1)$, 
		$r_2 = (0,u_2)$, $r_3 = (0,u_3)$,
		$r_4 = (0,u_4)$, $r_5 = (0,b)$ and 
		$u_i \in A_2^\times$ for $1 \leq k \leq 4$.  We observe that $s_1 r_j = 0$ for $1 \leq j \leq 5$, $s_2 r_5 = 0 = s_3 r_5$. Furthermore, $\text{ann}_A(s_i r_j) = A_1 \times \mathfrak{m}_2$ for \(i \in \{2,3\}\) and $1 \leq j \leq 4$ is an essential ideal. Therefore, the set \( S' \) induces a subgraph in \( EG(A) \) that contains \( K_{3,5} \), implying that \( \gamma(L(EG(A))) \geq 3 \) by Remark \ref{remark for genus of some  bipartite graphs}, a contradiction. 
		Thus $|\mathfrak{m}_2^*| = 1$, which implies that $A_2 \cong \mathbb{Z}_4$ or \(\frac{\mathbb{Z}_2[x]}{\langle x^2 \rangle}\). Now suppose that $|A_1| \geq 4$. Consider the vertex set $S'' = \{ s_1, s_2, s_3, r_1, r_2, r_3, r_4, r_5 \}$, where 
		$s_1 = (0,1)$, $s_2 = (0,v_1)$, 
		$s_3 = (0,v_2)$, $r_1 = (u_1,0)$, 
		$r_2 = (u_2,0)$, $r_3 = (u_3,0)$,
		$r_4 = (u_1,a)$, $r_5 = (u_2,a)$, 
		with $a \in \mathfrak{m}_2^*$, $v_1, v_2 \in A_2^\times$ and $u_k \in A_1^\times$ for all $1 \leq k \leq 3$. We observe that $s_i r_j = 0$ for  all \(i,j \in \{1,2,3\}\), also $\text{ann}_A(s_l r_h) = A_1 \times \mathfrak{m}_2$ is an essential ideal for all $1 \leq l \leq 3$ and \(h \in \{4,5\}.\) Therefore, the set \( S'' \) induces a subgraph in \( EG(A) \) that contains \( K_{3,5} \), implying that \( \gamma(L(EG(A))) \geq 3 \) by Remark \ref{remark for genus of some  bipartite graphs}, leads to a contradiction. Hence, $|A_1| \leq 3$, that is, $A_1 \cong \mathbb{Z}_2$ or $\mathbb{Z}_3$. 
		If \( A \cong \mathbb{Z}_2 \times \mathbb{Z}_4 \) or \( \mathbb{Z}_2 \times \frac{\mathbb{Z}_2[x]}{\langle x^2 \rangle} \), then \( \gamma(L(EG(A))) = 1 \) by Lemma~\ref{glz24}, contradicts the assumption. Similarly, if \( A \cong \mathbb{Z}_3  \times \mathbb{Z}_4 \) or \(\mathbb{Z}_3  \times  \frac{\mathbb{Z}_2[x]}{\langle x^2 \rangle} \), then \( \gamma(L(EG(A))) \geq 3 \) by Lemma~\ref{glz34}, again a contradiction. Similarly, if $A_1$ is not a field but $A_2$ is field, we get a contradiction. Thus, both $A_1$ and $A_2$ are fields. Therefore, by  Lemma \ref{Zero divisor graph = essential graph}, $\Gamma(A) = EG(A)$ and hence,  $A \cong \mathbb F_4\times \mathbb Z_5,\hspace{.2cm}
		\mathbb Z_3\times \mathbb Z_7,$\, or\,
		$\mathbb Z_2\times \mathbb Z_9,$ by  Lemma \ref{gzg} (ii).
		
		Conversly, if $A\cong \mathbb Z_2\times \mathbb F_9$,\, $\mathbb Z_3\times \mathbb Z_7$,\, $\mathbb F_4 \times \mathbb Z_5$ or $\mathbb Z_2 \times \mathbb Z_2 \times \mathbb Z_3$, then $A$ is reduced. Hence, the result follows by Lemma \ref{Zero divisor graph = essential graph} and \ref{gzg}(ii)(a).
	\end{proof}
	\begin{cor}
		For a non-reduced ring $A$, the genus of $L(EG(A))$ is never 2.
	\end{cor}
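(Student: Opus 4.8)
The plan is to reduce the statement immediately to the two classification results already obtained in this section, according to whether $A$ is local or non-local. Since $A$ is finite and commutative, exactly one of these two cases occurs.

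First I would dispose of the local case. If $A$ is a finite local commutative ring, then Theorem~\ref{genus of local ring} states that $\gamma(L(EG(A)))$ is never $1$ or $2$; in particular it is not $2$, regardless of whether $A$ is reduced or not. (Of course, a local ring with $EG(A)\neq\varnothing$ is automatically non-reduced, but we do not even need this observation.)

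Next I would handle the non-local case using Theorem~\ref{g2leg}. That theorem asserts that for a finite non-local commutative ring $A$, one has $\gamma(L(EG(A)))=2$ if and only if $A$ is isomorphic to one of $\mathbb{F}_4\times\mathbb{Z}_5$, $\mathbb{Z}_3\times\mathbb{Z}_7$, $\mathbb{Z}_2\times\mathbb{Z}_9$, or $\mathbb{Z}_2\times\mathbb{Z}_2\times\mathbb{Z}_3$. Each of these four rings is a finite direct product of finite fields, hence reduced. Consequently, if $A$ is a non-local ring with $\gamma(L(EG(A)))=2$, then $A$ is reduced; equivalently, no non-reduced non-local ring can have $\gamma(L(EG(A)))=2$. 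Combining this with the local case finishes the argument.

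There is essentially no obstacle here: the corollary is a direct bookkeeping consequence of Theorems~\ref{genus of local ring} and~\ref{g2leg}, the only thing to verify being the trivial fact that the four exceptional rings in Theorem~\ref{g2leg} are products of fields and therefore reduced.
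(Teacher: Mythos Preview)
Your approach is exactly what the paper intends: the corollary is stated without proof immediately after Theorem~\ref{g2leg}, and your case split into local (Theorem~\ref{genus of local ring}) and non-local (Theorem~\ref{g2leg}) is the natural justification.

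One slip to fix: you list $\mathbb{Z}_2\times\mathbb{Z}_9$ among the four rings and then assert that each is ``a finite direct product of finite fields, hence reduced.'' But $\mathbb{Z}_9$ is not a field; the element $(0,3)\in\mathbb{Z}_2\times\mathbb{Z}_9$ is a nonzero nilpotent, so this ring is \emph{not} reduced. The paper's statement of Theorem~\ref{g2leg} contains the same typo; the intended ring is $\mathbb{Z}_2\times\mathbb{F}_9$ (as in Lemma~\ref{gzg}(ii)(a) and in the converse part of the proof of Theorem~\ref{g2leg}). With $\mathbb{F}_9$ in place of $\mathbb{Z}_9$, your verification that all four rings are reduced goes through and the argument is complete.
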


	
	\section{Crosscap of the line graph of essential graph}
	\noindent In this section, we provide a complete classification of all finite commutative rings with identity for which the crosscap number of the line graphs of their essential graphs is at most 2.
	
	As in the previous section, we begin by considering the case of local rings.
	\begin{thm}
		For a finite local commutative ring $A,$ the crosscap number of the line graph $L(EG(A)) $ is never 1 or 2. 
	\end{thm}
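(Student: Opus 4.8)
The plan is to mirror the argument used for the genus in Theorem~\ref{genus of local ring}, exploiting that a finite local ring $A$ has $EG(A) \cong K_n$ with $n = |Z(A)^*|$ by Corollary~\ref{fcg}. Everything then reduces to controlling $\overline{\gamma}(L(K_n))$ across the three ranges $n \le 4$, $n = 5$, and $n \ge 6$, and showing that in no case does the value land in $\{1,2\}$.

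First I would dispose of the small cases. For $n \le 4$, Theorem~\ref{pegl} (equivalently, a direct check that $L(K_1)$, $L(K_2)$, $L(K_3)$ and $L(K_4) \cong K_{2,2,2}$ are planar) gives that $L(EG(A))$ is planar, so $\overline{\gamma}(L(EG(A))) = 0$. The case $n = 5$ cannot occur for a local ring: since $|A|$ is a prime power and $Z(A) = \mathfrak{m}$ is an additive subgroup of $A$, the number $|\mathfrak{m}| = |Z(A)^*| + 1$ must be a prime power, and $6$ is not. This is precisely the observation recorded just before Table~\ref{table of local rings} that no local ring has exactly five nonzero zero-divisors.

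Next I would handle $n \ge 6$. Here $L(K_6)$ is a subgraph of $L(K_n)$, and by Lemma~\ref{clkn}(i) we have $\overline{\gamma}(L(K_6)) \ge \tfrac{1}{6}(6-4)(6-3)(6+1) = 7$; since the crosscap number is monotone under passing to subgraphs, $\overline{\gamma}(L(EG(A))) = \overline{\gamma}(L(K_n)) \ge 7 \ge 3$. Combining the three ranges shows $\overline{\gamma}(L(EG(A))) \in \{0\} \cup \{3,4,\dots\}$, so it is never $1$ or $2$, which is the claim.

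I do not expect a genuine obstacle: the only point demanding a little care is the case $n = 5$, where one must invoke the arithmetic constraint on $|\mathfrak{m}|$ rather than any topological estimate — $L(K_5)$ is itself nonplanar, so without that arithmetic remark one might mistakenly search for an embedding on $N_1$ or $N_2$. Everything else is a routine application of the cited bounds (Theorem~\ref{pegl}, Lemma~\ref{clkn}) together with subgraph monotonicity of $\overline{\gamma}$.
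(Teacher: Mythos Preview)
Your proposal is correct and follows exactly the approach the paper intends: the paper's proof simply reads ``The result can be obtained using the approach similar to the one used in the proof of Theorem~\ref{genus of local ring},'' and your write-up spells out that very argument (planarity for $n \le 4$ via Theorem~\ref{pegl}, nonexistence of $n = 5$ by the prime-power constraint on $|\mathfrak{m}|$, and $\overline{\gamma}(L(K_n)) \ge 3$ for $n \ge 6$ via Lemma~\ref{clkn}(i)). You have in fact supplied more detail than the paper does.
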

	\begin{proof}
		The result can be obtained using the approach similar to the one used in the proof of Theorem \ref{genus of local ring}.
		
	\end{proof}
	
	For non-local rings, we make use of the following result on crosscap number of the line graph of the zero-divisor graphs.
	\begin{lem}\label{czg} \textnormal{\cite{chi-hsi}}
		Given a  finite non-local commutative ring $A,$ the following holds:
		\vskip.2cm
		\begin{enumerate}[label=\textnormal{(\roman*).}]
			
			\item The crosscap number of $L(\Gamma(A))$ is 1 if and only if $A$ is a reduced ring and  isomorphic to\, $\mathbb Z_2\times \mathbb Z_7$\, or\, $\mathbb F_4\times \mathbb F_4$.
			\vspace{0.1cm}
			\item The crosscap number of $L(\Gamma(A))$ is  2 if and only if either of the following holds:
			\vskip.2cm
			\begin{enumerate}[label=\textnormal{(\alph*).}]
				
				\item $A$ is a reduced ring and isomorphic to $\mathbb Z_3 \times \mathbb Z_5$ or\, $\mathbb Z_2 \times \mathbb Z_2 \times \mathbb Z_3$.
				
				\item $A$ is a non-reduced ring and isomorphic to \, $\mathbb F_4 \times \mathbb Z_4 $ or\, $\mathbb F_4 \times 
				\frac{ \mathbb Z_2[x]}{\langle x^2 \rangle}.$ 
			\end{enumerate}
			
		\end{enumerate}
	\end{lem}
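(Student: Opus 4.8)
The plan is to mirror the genus analysis behind Lemma~\ref{gzg}, substituting the crosscap inequalities for the genus ones. Write $A \cong A_1 \times \cdots \times A_n$ as a product of finite local rings with maximal ideals $\mathfrak{m}_i$, where $n \geq 2$. The standing tools are: a degree-$d$ vertex of $\Gamma(A)$ forces a clique $K_d$ in $L(\Gamma(A))$, so $\overline{\gamma}(L(\Gamma(A))) \geq \overline{\gamma}(K_d)$; two large-degree vertices give the bound of Lemma~\ref{lm2}(ii); a vertex of degree $\geq 7$ with two of degree $\geq 5$ gives $\overline{\gamma}(L(\Gamma(A))) \geq 3$ by Lemma~\ref{lm3}; a planar $3$-connected subgraph all of whose vertices meet a subgraph $H_2$ gives $\overline{\gamma} > \overline{\gamma}(H_2)$ by Lemma~\ref{lm1}; and for a product of two fields, $\Gamma(A) = EG(A)$ (Lemma~\ref{Zero divisor graph = essential graph}) equals $K_{|A_1|-1,\,|A_2|-1}$ (Lemma~\ref{complete bipartite}), whose line-graph crosscap is governed by Proposition~\ref{genus and crosscap of Kn and Knm}, Lemma~\ref{clkn}, and the small-surface values $\overline{\gamma}(L(K_{2,4})) = \overline{\gamma}(L(K_{2,5})) = 2$ and $\overline{\gamma}(L(K_{3,3})) = 1$ that underpin the threshold checks.

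First I would bound $n$. For $n \geq 4$ the primitive idempotent $e_i$, which in $\Gamma(A)$ is adjacent to every nonzero zero-divisor supported off coordinate $i$, together with its sums with the other $e_j$, yields three vertices of degree $\geq 7$, whence $\overline{\gamma}(L(\Gamma(A))) \geq 3$ by Lemma~\ref{lm3}. For $n = 3$, if some $A_i$ is a non-field choose $a \in \mathfrak{m}_1^{\ast}$ with $\mathrm{ann}_{A_1}(a) = \mathfrak{m}_1$; then $(a,0,0)$ with $e_2, e_3$ again meets the hypotheses of Lemma~\ref{lm3}, so every $A_i$ is a field and $\Gamma(A) = EG(A)$. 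A direct inspection then shows that the only products of three fields with crosscap $\leq 2$ are $\mathbb Z_2 \times \mathbb Z_2 \times \mathbb Z_2$, where $L(\Gamma(A))$ is even planar, and $\mathbb Z_2 \times \mathbb Z_2 \times \mathbb Z_3$, of crosscap $2$ by an explicit Klein-bottle drawing, since enlarging any factor produces high-degree vertices that trigger Lemma~\ref{lm2}(ii) or Lemma~\ref{lm3}. Thus only $n = 2$ survives.

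For $n = 2$, write $A = A_1 \times A_2$. If both factors are fields then $\Gamma(A) \cong K_{p,q}$ with $p = |A_1|-1 \leq q = |A_2|-1$ and $p+1, q+1$ prime powers; from $L(K_{1,q}) = K_q$ with the known values of $\overline{\gamma}(K_q)$, and from Lemma~\ref{clkn}(ii) together with the tabulated small cases for $p \in \{2,3\}$, one checks that $\overline{\gamma}(L(K_{p,q})) \in \{1,2\}$ exactly for $K_{1,6}$ ($A \cong \mathbb Z_2 \times \mathbb Z_7$) and $K_{3,3}$ ($A \cong \mathbb F_4 \times \mathbb F_4$) with crosscap $1$, and $K_{2,4}$ ($A \cong \mathbb Z_3 \times \mathbb Z_5$) with crosscap $2$; every other admissible pair has planar line graph or contains $K_{1,7}$, $K_{2,6}$ or $K_{3,4}$ and so has crosscap $\geq 3$. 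If exactly one factor, say $A_2$, is a non-field, I would bound $|\mathfrak{m}_2^{\ast}|$ and $|A_1|$ by the same high-degree-vertex device: $(0,a)$ with $\mathrm{ann}_{A_2}(a) = \mathfrak{m}_2$ is adjacent to all of $A_1 \times \mathfrak{m}_2$ and to every $(0,u)$, $u \in A_2^{\times}$, and pairing it with $(1,a)$, with a second element of $\mathfrak{m}_2^{\ast}$, or with a nonidentity unit of $A_1$ triggers Lemma~\ref{lm2}(ii) or Lemma~\ref{lm3} unless $|\mathfrak{m}_2^{\ast}| = 1$ and $A_1 \in \{\mathbb Z_2, \mathbb Z_3, \mathbb F_4\}$; the remaining finitely many rings are settled by hand, with $\mathbb Z_2 \times \mathbb Z_4$, $\mathbb Z_2 \times \frac{\mathbb Z_2[x]}{\langle x^2\rangle}$, $\mathbb Z_3 \times \mathbb Z_4$, $\mathbb Z_3 \times \frac{\mathbb Z_2[x]}{\langle x^2\rangle}$ having planar line graphs by Lemma~\ref{pzg}, while $\mathbb F_4 \times \mathbb Z_4$ and $\mathbb F_4 \times \frac{\mathbb Z_2[x]}{\langle x^2\rangle}$ share the same zero-divisor graph and admit an explicit Klein-bottle embedding of its line graph, hence crosscap $2$, and no one-field/one-non-field ring has crosscap $1$. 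Finally, if both factors are non-fields, the minimal case $\mathbb Z_4 \times \mathbb Z_4$, which is contained in every such product, already forces crosscap $\geq 3$; the converse direction of the lemma is precisely the list of explicit embeddings assembled above.

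The main obstacle is twofold. First, one needs the explicit surface embeddings that the general lemmas do not supply: projective embeddings of $L(\Gamma(A))$ for $\mathbb Z_2 \times \mathbb Z_7$ and $\mathbb F_4 \times \mathbb F_4$, Klein-bottle embeddings for $\mathbb Z_3 \times \mathbb Z_5$, $\mathbb F_4 \times \mathbb Z_4$, $\mathbb F_4 \times \frac{\mathbb Z_2[x]}{\langle x^2\rangle}$ and $\mathbb Z_2 \times \mathbb Z_2 \times \mathbb Z_3$, together with the value $\overline{\gamma}(L(K_{3,3})) = 1$. Second, at the threshold cases such as $\mathbb Z_4 \times \mathbb Z_4$, $\mathbb F_4 \times \mathbb Z_4$ and $\mathbb F_4 \times \mathbb Z_5$, the crude clique bounds and the estimates of Lemma~\ref{clkn} fall one short of the value needed, and closing the gap requires either Lemma~\ref{lm1}, applied with a $3$-connected planar $K_4$ or $K_5$ attached to a non-planar remainder, or a direct Euler-characteristic count on $L(\Gamma(A))$.
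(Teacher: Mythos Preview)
This lemma is not proved in the paper: it is quoted verbatim from Chiang-Hsieh, Lee and Wang \cite{chi-hsi} and is used only as an input to Theorems~\ref{c1leg} and~\ref{c2leg}. There is therefore no proof in the present paper to compare your proposal against.

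That said, your outline is a faithful reconstruction of the strategy that \cite{chi-hsi} actually uses: bound the number $n$ of local factors via Lemmas~\ref{lm2} and~\ref{lm3}, reduce $n=3$ to products of fields, split $n=2$ into the three subcases (both fields, one field, neither a field), and handle the two-field case via $\Gamma(A)\cong K_{|A_1|-1,|A_2|-1}$ together with the exact values in Remark~\ref{remark for genus of some bipartite graphs} and Lemma~\ref{clkn}. Your enumeration of the two-field survivors is correct, and you correctly note that the gaps $q=5$ in $K_{1,q}$ and $K_{2,q}$ are irrelevant because $6$ is not a prime power. Your honest list of obstacles is exactly right: the explicit projective and Klein-bottle embeddings for the six surviving rings, and the lower bound $\overline{\gamma}\geq 3$ for the threshold case $\mathbb Z_4\times\mathbb Z_4$, are precisely the pieces that \cite{chi-hsi} supplies by direct construction and by an Euler-characteristic or Lemma~\ref{lm1}-type argument, and cannot be recovered from the lemmas quoted in this paper alone.
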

	We start with the case where the crosscap number equals one.
	
	\begin{thm}\label{c1leg}
		Let $A$ be a finite non-local commutative ring. Then, the crosscap number of the line graph of the essential graph of $A$ is 1 if and only if $A$ is isomorphic to one of the rings: $\mathbb Z_2\times \mathbb Z_4,\hspace{.2cm} \mathbb Z_2 \times \frac{\mathbb Z_2[x]}{\langle x^2 \rangle},\hspace{.2cm} \mathbb Z_2\times \mathbb Z_7,\hspace{.2cm} \mathbb F_4\times \mathbb F_4.$
	\end{thm}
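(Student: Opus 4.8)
The plan is to mirror the proof of Theorem~\ref{g1leg}, replacing every orientable-genus estimate by its crosscap counterpart. Write $A \cong A_1 \times \cdots \times A_n$ with each $A_i$ a finite local ring and $n \geq 2$, and assume $\overline{\gamma}(L(EG(A))) = 1$.

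First I would bound $n$. If $n \geq 4$, the vertices used in the corresponding case of Theorem~\ref{g1leg} yield two vertices of $EG(A)$ of degree at least $5$; since $\overline{\gamma}(K_5) = \lceil (5-4)(5-3)/6 \rceil = 1$ and the correction term $\alpha$ in Lemma~\ref{lm2}(ii) equals $0$ (both degrees are $5 \neq 7$), Lemma~\ref{lm2}(ii) gives $\overline{\gamma}(L(EG(A))) \geq \overline{\gamma}(K_5) + \overline{\gamma}(K_5) = 2$, a contradiction. If $n = 3$ and some $|A_i| \geq 3$, the analogous vertex set from Theorem~\ref{g1leg} again produces two vertices of degree at least $5$, so $\overline{\gamma}(L(EG(A))) \geq 2$, impossible; hence every $A_i \cong \mathbb{Z}_2$ and $A \cong \mathbb{Z}_2 \times \mathbb{Z}_2 \times \mathbb{Z}_2$, for which $L(EG(A))$ is planar by Theorem~\ref{pegnl}, contradicting crosscap $1$. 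Therefore $n = 2$.

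Next I would split $A \cong A_1 \times A_2$ into the three sub-cases of Theorem~\ref{g1leg}. If neither factor is a field, the vertex set $S$ there induces a subgraph of $EG(A)$ containing $K_{3,4}$, which is impossible because $\overline{\gamma}(L(K_{3,4})) \geq 3$ by Remark~\ref{remark for genus of some  bipartite graphs}. If $A_1$ is a field and $A_2$ is not, then $|\mathfrak{m}_2^*| = 1$ must hold, for otherwise the seven vertices exhibited in Theorem~\ref{g1leg} give two vertices of degree at least $5$ and hence $\overline{\gamma}(L(EG(A))) \geq 2$; thus $A_2 \cong \mathbb{Z}_4$ or $\frac{\mathbb{Z}_2[x]}{\langle x^2 \rangle}$. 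Similarly $|A_1| = 2$, since otherwise the set $S'$ of Theorem~\ref{g1leg} induces a $K_{3,4}$ in $EG(A)$, again impossible. So $A \cong \mathbb{Z}_2 \times \mathbb{Z}_4$ or $\mathbb{Z}_2 \times \frac{\mathbb{Z}_2[x]}{\langle x^2 \rangle}$, and the case with the roles of $A_1$ and $A_2$ interchanged is identical. Finally, if both $A_1$ and $A_2$ are fields then $A$ is reduced, so $EG(A) = \Gamma(A)$ by Lemma~\ref{Zero divisor graph = essential graph}, and Lemma~\ref{czg}(i) forces $A \cong \mathbb{Z}_2 \times \mathbb{Z}_7$ or $\mathbb{F}_4 \times \mathbb{F}_4$.

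For the converse, if $A \cong \mathbb{Z}_2 \times \mathbb{Z}_4$ or $\mathbb{Z}_2 \times \frac{\mathbb{Z}_2[x]}{\langle x^2 \rangle}$ then the two essential graphs coincide and $\overline{\gamma}(L(EG(A))) = 1$ by Lemma~\ref{glz24}; if $A \cong \mathbb{Z}_2 \times \mathbb{Z}_7$ or $\mathbb{F}_4 \times \mathbb{F}_4$ then $A$ is reduced, $EG(A) = \Gamma(A)$, and $\overline{\gamma}(L(EG(A))) = 1$ by Lemma~\ref{czg}(i). I expect no genuinely hard step: the only things to watch are that the vertex sets borrowed from Theorem~\ref{g1leg} still induce the claimed complete bipartite subgraphs of $EG(A)$ under essential-ideal adjacency — the relevant products are either $0$ or have annihilator $\mathfrak{m}_1 \times A_2$ or $A_1 \times \mathfrak{m}_2$, each of which is essential — and that the term $\alpha$ in Lemma~\ref{lm2}(ii) is $0$ in every degree argument. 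The only substantive inputs beyond Theorem~\ref{g1leg} are the values $\overline{\gamma}(K_5) = 1$ and $\overline{\gamma}(L(K_{3,4})) \geq 3$ replacing their orientable analogues.
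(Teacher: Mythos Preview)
Your proposal is correct and follows essentially the same case-elimination strategy as the paper's proof, invoking Lemma~\ref{lm2}(ii), the $K_{3,4}$ bound from Remark~\ref{remark for genus of some  bipartite graphs}, and Lemmas~\ref{glz24} and~\ref{czg}(i) at the right places. The paper organizes the cut-down slightly differently---it treats all $n\geq 3$ at once, and for $n=2$ with one non-field factor it first bounds $|A_1|$ via Lemma~\ref{glz34} and then bounds $|\mathfrak{m}_2^*|$ via a $K_{2,4}$ subgraph---but these are cosmetic variations on the same argument.
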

	\begin{proof}
		Since \(A \) is a non-local finite ring, we can write \( A \cong A_1 \times A_2 \times \cdots \times A_n \), where each \( A_i \) is a finite local ring and \( n \geq 2 \). Let us assume that \(\overline{\gamma}(L(EG(A))) = 1\).   We claim that $ n = 2.$ Suppose, for contradiction, that \(n \geq 3.\) Additionally, assume that for at least one \( i \), $|A_i| \geq 3.$  We may assume that $|A_1| \geq 3.$ Consider the vertices  
		$s_1 = (1,0,0,\dots,0),\, 
		s_2 = (0,1,0,\dots,0),\, 
		s_3 = (0,0,1,0,\dots,0),\,
		s_4 = (u,0,0,\dots,0),\, 
		s_5 = (u,1,0,\dots,0),\,
		s_6 = (u,0,1,0,\dots,0),\,
		s_7 = (1,0,1,0,\\\dots,0),\,
		s_8 = (1,1,0,\dots,0),$ 
		where $ u \in A_1^\times \setminus \{1\}$. We observe that $s_2s_j=0$ for $i\in \{1,3,4,6,7\}$ and $s_2s_j = 0$ for $j \in \{1,2,4,5,8\}$.
		Hence, \( \deg(s_2) \geq 5 \) and \( \deg(s_3) \geq 5 \). By Lemma~\ref{lm2}, it follows that \(
		\gamma(L(EG(A))) \geq \gamma(K_5) + \gamma(K_5) = 2,\) leading to a contradiction. 
		Therefore, \( |A_i| = 2 \) for all \( i \), and so
		\(
		A \cong B_n = \mathbb{Z}_2 \times \cdots \times \mathbb{Z}_2 \, (n \text{ times}),
		\)
		which is a Boolean ring and hence reduced. 
		If \( n \geq 4 \), then by Lemmas~\ref{Zero divisor graph = essential graph}, \ref{pegnl}, and \ref{czg}(i), the genus of \( L(EG(B_n)) \) is at least 3, contradicting our assumption. When \( n = 3 \), Theorem~\ref{pegnl} shows that \( L(EG(B_3)) \) is planar, again a contradiction.  Thus, we conclude that \( n = 2 \), as claimed.

		Now, suppose \(n = 2\). Let us first assume that both \(A_1\) and \(A_2\) are not fields. Then, for each \(i = 1,2\), we have \(|A_i| \geq 4\) and \(|\mathfrak{m}_i^*| \geq 1\) and there exists an element $a_i \in \mathfrak{m}_i^*$ such that $\text{ann}_{A_i}(a_i) = \mathfrak{m}_i.$ Consider the set of vertices $S = \{s_1, s_2, s_3, r_1, r_2, r_3, r_4\}$, where $s_1 = (1,0)$, $s_2 = (u_1,0)$, $s_3 = (a_1,0)$, $r_1 = (0,1)$, $r_2 = (0,u_2)$, $r_3 = (a_1,u_2)$ and $r_4 = (a_1,1)$, and $ u_i \in A_i^\times \setminus \{1\}$. We observe that $s_ir_k = 0$ for $1 \leq i \leq 3$ and $k \in \{1,2\},$  and $s_3r_j = 0$ for  $ j\in \{3,4\}$. Furthermore, $\text{ann}_A(s_lr_m) = \mathfrak{m}_1 \times A_2$ for $l\in \{ 1, 2\}$ and $m \in \{3, 4\}$ is an essential ideal. Therefore, the set \( S \) induces a subgraph in \( EG(A) \) that contains \( K_{3,5} \), implying that \( \overline{\gamma}(L(EG(A))) \geq \overline{\gamma}(L( K_{3,5})) \geq 3 \) by Remark \ref{remark for genus of some  bipartite graphs}, which leads to a contradiction. Hence, at least one of $A_i$ must be a field.
		
		Now, consider the case where \( A_1 \) is a field while \( A_2 \) is not a field with maximal ideal \(\mathfrak{m}_2\). If $|A_1|  \geq 3,$ then $EG(\mathbb Z_3 \times \mathbb Z_4)$ appears as a subgraph in $EG(A_1 \times A_2).$  Therefore, by Lemma \ref{glz34}, we have $\overline{\gamma}(L(EG(A_1 \times A_2))) \geq \overline{\gamma}(L(EG(\mathbb Z_3 \times \mathbb Z_4))) \geq 3$, which leads to a contradiction.  Hence, $|A_1| = 2$, that is, $A_1 \cong \mathbb Z_2$. 
		Next, suppose \( |\mathfrak{m}_2^*| \geq 2 \). Then \( |A_2^\times| \geq 4 \), and there exists \( a \in \mathfrak{m}_2^* \) such that \( \mathrm{ann}_{A_2}(a) = \mathfrak{m}_2 \).  Consider the set of vertices $S'= \{ s_1,s_2,r_1,r_2,r_3,r_4\}$, where $s_1 = (1,0),\, s_2 = (1,a),\, r_1 =(0,1),\, r_2= (0,u_1),\, r_3 =(0,u_2),\, r_4=(0,u_3)$, where  $u_i \, \in \, A_2^\times \setminus \{1\}$. We observe that $s_1r_j= 0$ for $1\leq j \leq 4$. Moreover, for each \( j \), the annihilator \( \text{ann}_{A}(s_2 r_j) = A_1 \times \mathfrak{m}_2 \) is an essential ideal. Hence, the subgraph of \( EG(A) \) induced by \( S' \) contains a copy of \( K_{2,4} \), and so
		\(
		\overline{\gamma}(L(EG(A))) \geq \overline{\gamma}(L(K_{2,4})) = 2,
		\)
		which again contradicts our assumption.
		Therefore, \( |\mathfrak{m}_2^*| = 1 \), implying \( A_2 \cong \mathbb{Z}_4 \) or \( \frac{\mathbb{Z}_2[x]}{\langle x^2 \rangle}\). Thus, the only possible rings in this case are
		\(
		A \cong \mathbb{Z}_2 \times \mathbb{Z}_4 \; \text{or} \; \mathbb{Z}_2 \times \frac{\mathbb{Z}_2[x]}{\langle x^2 \rangle}.
		\)
		
		Suppose that both \(A_1\) and \(A_2\) are fields. In this case, by Lemma~\ref{Zero divisor graph = essential graph}, we have \(EG(A) = \Gamma(A)\).
		Consequently, it follows from  Lemma \ref{czg}(i) that $A\cong \mathbb Z_2 \times \mathbb Z_7$ or $\mathbb F_4 \times \mathbb F_4$.
		
		Conversely, if $A \cong \mathbb{Z}_2 \times \mathbb{Z}_4$ or $\mathbb{Z}_2 \times \frac{\mathbb{Z}_2[x]}{\langle x^2 \rangle}$, it follows form Lemma \ref{glz24} that $\overline{\gamma}(L(EG(A))) = 1$. If $A \cong \mathbb{F}_4 \times \mathbb{F}_4 $\, or\, $ \mathbb{Z}_2 \times \mathbb{Z}_7$, then $A$ is reduced. Hence, the result follows by Lemma \ref{Zero divisor graph = essential graph} and \ref{czg}(ii)(a).
		
	\end{proof}
	\vspace{1pt} 

	Next, we prove that when the crosscap number of the line graph $L(EG(A)$ is two, then $A$ is a reduced ring. Thus, from Lemma \ref{czg}(ii), we get  the  characterization of rings $A$ for which the crosscap number of the graph $L(EG(A))$ is 2.

	\begin{thm}\label{c2leg}
		Let $A$ be a finite non-local commutative ring. Then the crosscap number of the line graph of the essential graph of $A$ is 2 if and only if $A$ is isomorphic to one of the rings: $\mathbb Z_3 \times \mathbb Z_5, \hspace{.2cm} \mathbb Z_2 \times \mathbb Z_2 \times \mathbb Z_3.$
	\end{thm}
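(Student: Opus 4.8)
The plan is to follow the blueprint of the proof of Theorem~\ref{g2leg}. Write $A \cong A_1 \times A_2 \times \cdots \times A_n$ with each $A_i$ a finite local ring with maximal ideal $\mathfrak{m}_i$ and $n \geq 2$, and suppose $\overline{\gamma}(L(EG(A))) = 2$. First I would rule out $n \geq 4$: the same thirteen vertices $s_1, \dots, s_{13}$ used in Theorem~\ref{g2leg} (their pairwise zero products, together with $\mathrm{ann}_A(0) = A$ being essential, supplying the neighbours) give three vertices of $EG(A)$ of degrees at least $6$, $7$ and $7$, so Lemma~\ref{lm3} forces $\overline{\gamma}(L(EG(A))) \geq 3$, a contradiction. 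For $n = 3$ I would show every $A_i$ is a field: if, say, $A_1$ is not a field, choose $a \in \mathfrak{m}_1^*$ with $\mathrm{ann}_{A_1}(a) = \mathfrak{m}_1$ and reuse the vertex set of the $n = 3$ step of Theorem~\ref{g2leg} to obtain vertices of degrees at least $7$, $5$ and $5$; Lemma~\ref{lm3} again gives $\overline{\gamma}(L(EG(A))) \geq 3$.

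The case $n = 2$ carries the real content. Write $A \cong A_1 \times A_2$. If neither factor is a field, the eight-vertex set from the corresponding step of Theorem~\ref{g2leg} spans a copy of $K_{3,5}$ in $EG(A)$ — the extra edges arising because $\mathrm{ann}_A(s_i r_j) = \mathfrak{m}_1 \times A_2$ is essential whenever $\mathfrak{m}_1 \neq 0$ — so $\overline{\gamma}(L(EG(A))) \geq \overline{\gamma}(L(K_{3,5})) \geq 3$ by Remark~\ref{remark for genus of some bipartite graphs}, a contradiction. Hence at least one factor, say $A_1$, is a field while $A_2$ is not. If $|A_1| \geq 3$, then choosing $0$, $1$ and a unit $v$ of $A_1$, an element $a \in \mathfrak{m}_2^*$ with $\mathrm{ann}_{A_2}(a) = \mathfrak{m}_2$ (so $a^2 = 0$), and units of $A_2$ exhibits $EG(\mathbb{Z}_3 \times \mathbb{Z}_4) \cong K_{1,2,4}$ as a subgraph of $EG(A)$, so Lemma~\ref{glz34} gives $\overline{\gamma}(L(EG(A))) \geq 3$; thus $A_1 \cong \mathbb{Z}_2$. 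If moreover $|\mathfrak{m}_2^*| \geq 2$, pick distinct $a, b \in \mathfrak{m}_2^*$ with $\mathrm{ann}_{A_2}(a) = \mathfrak{m}_2$ and units $u_1, u_2, u_3, u_4 \in A_2^\times$; then $\{(1,0),(1,a),(0,a)\}$ together with $\{(0,u_1),(0,u_2),(0,u_3),(0,u_4),(0,b)\}$ span a $K_{3,5}$ in $EG(A)$ (using $ab = 0$ and that $\mathrm{ann}_A((0, a u_k)) = \mathbb{Z}_2 \times \mathfrak{m}_2$ is essential), contradicting $\overline{\gamma} = 2$. Therefore $|\mathfrak{m}_2^*| = 1$, so $A_2 \cong \mathbb{Z}_4$ or $\mathbb{Z}_2[x]/\langle x^2 \rangle$; but then $A \cong \mathbb{Z}_2 \times \mathbb{Z}_4$ or $\mathbb{Z}_2 \times \mathbb{Z}_2[x]/\langle x^2 \rangle$, for which $\overline{\gamma}(L(EG(A))) = 1$ by Lemma~\ref{glz24} — a contradiction. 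So both $A_1$ and $A_2$ are fields.

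Having forced every local factor of $A$ to be a field, $A$ is reduced, so $EG(A) = \Gamma(A)$ by Lemma~\ref{Zero divisor graph = essential graph}; then Lemma~\ref{czg}(ii) (its non-reduced clause (b) being vacuous) yields $A \cong \mathbb{Z}_3 \times \mathbb{Z}_5$ or $\mathbb{Z}_2 \times \mathbb{Z}_2 \times \mathbb{Z}_3$. Conversely, both of these rings are reduced, whence $EG(A) = \Gamma(A)$ and $\overline{\gamma}(L(EG(A))) = 2$ by Lemma~\ref{czg}(ii)(a). I expect the delicate step to be the $n = 2$ sub-case with exactly one field factor: unlike the others it is not dispatched by a pure forbidden-subgraph or degree estimate but hinges on the precise value $\overline{\gamma}(L(EG(\mathbb{Z}_2 \times \mathbb{Z}_4))) = 1$ supplied by Lemma~\ref{glz24}, and in every vertex configuration one must verify that the annihilators of the products involved are genuinely essential ideals of $A$.
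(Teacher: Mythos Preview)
Your proposal is correct and follows essentially the same blueprint as the paper's proof: rule out $n\geq 4$ via Lemma~\ref{lm3}, force all factors to be fields when $n=3$, and for $n=2$ whittle any non-field factor down to $\mathbb{Z}_4$ or $\mathbb{Z}_2[x]/\langle x^2\rangle$ paired with $\mathbb{Z}_2$, which Lemma~\ref{glz24} then excludes. The only tactical differences are that in the $n=3$ sub-case and in the $|\mathfrak{m}_2^*|\geq 2$ sub-case the paper exhibits a single vertex of degree $\geq 7$ (hence a $K_7$ in the line graph, with $\overline{\gamma}(K_7)=3$ by Proposition~\ref{genus and crosscap of Kn and Knm}) rather than invoking Lemma~\ref{lm3} or the $K_{3,5}$ bound from Remark~\ref{remark for genus of some bipartite graphs}; and the paper folds your separate ``neither factor a field'' step into the $|A_1|\geq 3$ argument. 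These variants are interchangeable and of the same strength.
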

	\begin{proof}
		Since \( A \) is a non-local finite ring, we can write \( A \cong A_1 \times A_2 \times \cdots \times A_n \), where each \( A_i \) is a finite local ring and \( n \geq 2 \). Let us assume that \(\overline{\gamma}(L(EG(A))) = 2\).  We claim that $n = 2$ or $3.$ Suppose, for contradiction, that $n\geq 4.$ Consider the vertices 
		$s_1 = (1,0,0,0,\ldots,0)$, 
		$s_2 = (0,1,0,0,\ldots,0)$, 
		$s_3 = (0,0,1,0,\ldots,0)$,
		$s_4 = (1,0,0,1,0,\ldots,0)$, 
		$s_5 = (0,1,1,0,\ldots,0)$, 
		$s_6 = (0,1,0,1,0,\ldots,0)$, 
		$s_7 = (0,0,1,1,0,\ldots,0)$, 
		$s_8 = (1,0,1,0,\ldots,0)$, 
		$s_9 = (1,0,0,1,0, \ldots, 0)$, 
		$s_{10} = (1,1,0,0,\ldots,0)$,
		$s_{11} = (0,1,1,1,0,\ldots,0)$,
		$s_{12} = (1,0,1,1,0,\ldots,0)$ and 
		$s_{13}= (1,1,0,1,0,\ldots,0).$ 
		Note that $s_1s_i = 0$ for $i \in \{2,3,5,6,7,11\},$ $s_2s_j = 0$ for $j \in \{1,3,4,7,8,9,12\}$ and $s_3s_k = 0$ for $ k \in \{1,2,4,6,9,10,13\}.$ Therefore, we find that $\text{deg}(s_1) \geq 6,$ $ \text{deg}(s_2) \geq 7$ and $\text{deg}(s_3) \geq 7.$  Applying Lemma~\ref{lm3}, it follows that \( \overline{\gamma}(L(EG(A))) \geq 3 \), leading to a contradiction. Thus $ n = 2$ or $ 3.$

		Suppose $n = 3.$ We aim to show that each component ring \( A_i \) must be a field. Assume, for instance, that \( A_1 \) is not a field. Then, there exists \( a \in \mathfrak{m}_1^*\) which satiesfies \( \mathrm{ann}_{A_1}(a) = \mathfrak{m}_1.\) Consider the vertices
		$s_1 = (1,0,0,0,\dots,0),\, 
		s_2 = (0,1,0,0,\dots,0),\,
		s_3 = (0,0,1,0,\dots,0),\,
		s_4=(1,1,0,0,\dots,0),\,
		s_5 = (1,0,1,0,\dots,0),\,
		s_6 = (0,1,1,0,\dots,0),\,
		s_7 = (a,1,0,0,\dots,0),\,
		v = (a,0,0,\\0,\dots,0)$. 
		We observe that $\,\text{ann}_A(s_iv)\,$ is an essential ideal for each $1\leq i \leq 7$. Therefore, $\text{deg}(v) \geq 7$ in $EG(A)$, implies that the line graph $L(EG(A))$ contains $K_7$. Therefore, by Proposition \ref{genus and crosscap of Kn and Knm}(i)(b), we have $\overline{\gamma}(L(EG(A))) \geq \overline{\gamma}(K_7) = 3$, which leads to a contradiction. Thus, each  $A_i$ must be a field, meaning that $A$ is a reduced ring. Hence, applying Lemmas \ref{Zero divisor graph = essential graph} and \ref{czg}(ii)(a), it follows that \( A \cong \mathbb{Z}_2 \times \mathbb{Z}_2 \times \mathbb{Z}_3.\)

		Suppose \(n=2\). We aim to show that both \( A_1 \) and \( A_2 \) are fields. Suppose one of $A_1,$ $A_2$ is not a field. Assume, for instance, that \( A_2 \) is not a field. First, we prove that $|A_1| = 2.$ For if,  $|A_1| \geq 3,$ then $EG(A_1 \times A_2)$ contains $ EG(\mathbb Z_3 \times \mathbb Z_4)$ as a subgraph and so,  $\overline{\gamma}(L(EG(A_1 \times A_2))) \geq \overline{\gamma}(L(EG(\mathbb Z_3 \times \mathbb Z_4))) \geq 3$ by Lemma \ref{glz34}, leads to a contradiction. Hence, $A_1 \cong \mathbb Z_2$. Now, we show that $|\mathfrak{m}_2^*| = 1.$ Suppose $|\mathfrak{m}_2^*| \geq 2$. Then we have $|A_2^{\times}| \geq 4$ and there exist $a, b \in \mathfrak{m}_2^*$  with \(ab = 0\) and \(\mathrm{ann}_{A_2}(a) = \mathfrak{m}_2\),  implying \(a^2 = 0\). Consider the vertices $v = (0,a),\, s_1 =(1,a),\, s_2 = (0,b),\,  s_3 =(1,b),\, s_4 = (0,1),\, s_5 = (0, u_1),\, s_6 = (0, u_2),\, s_7 = (0,u_3)$, where $ u_i \in A_2^\times \setminus \{1\}.$ We observe that $vs_i= 0$ for all $1\leq i \leq 3$ and $\text{ann}_A(vs_j) = \mathbb Z_2 \times \mathfrak{m}_2$ for all $ 4\leq j \leq 7$ is an essential ideal. Therefore, $\text{deg}(v) \geq 7$ in $EG(A)$, implies that the line graph $L(EG(A))$ contains $K_7$. Therefore, by Proposition \ref{genus and crosscap of Kn and Knm}(i)(b), we have  $\overline{\gamma}(L(EG(A))) \geq \overline{\gamma}(K_7) = 3$, leading to a contradiction. Hence, $|\mathfrak{m}_2^*| = 1$, implying that  $A_2 $ isomorphic to $\mathbb Z_4$ or $\frac{\mathbb Z_2[x]}{\langle x^2 \rangle}.$ Thus, we obtain  $A \cong \mathbb{Z}_2 \times \mathbb{Z}_4$ or $\mathbb{Z}_2 \times \frac{\mathbb{Z}_2[x]}{\langle x^2 \rangle}.$ But, in this case the crosscap of $L(EG(A))$ is 1 by Lemma \ref{glz24}, which leads to a contradiction.
		Therefore, both \(A_1\) and \(A_2\) must be fields. This implies that the ring \(A\) is reduced. According to Lemma~\ref{Zero divisor graph = essential graph}, it follows that \(EG(A) = \Gamma(A)\). Then, by Lemma~\ref{czg}(ii)(a), in this case the only possible ring is \(A \cong \mathbb{Z}_3 \times \mathbb{Z}_5\).

		Conversly, if \(A \cong \mathbb{Z}_3 \times \mathbb{Z}_5\) or
		$\mathbb Z_2 \times \mathbb Z_2 \times \mathbb Z_3,$ then the ring $A$ is  reduced. Hence, by Lemma \ref{Zero divisor graph = essential graph} and \ref{czg}(ii)(a), $\bar{\gamma}L(EG(A)) = 2$.
	\end{proof}

	\begin{cor}\label{crosscap of essential grh for nonreduced}
		For a non-reduced ring $A$, the crosscap number of $L(EG(A))$ is never 2.
	\end{cor}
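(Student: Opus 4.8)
The plan is to deduce this immediately from the classification results already established in this section, by splitting on whether $A$ is local or non-local. First I would observe that a non-reduced ring is certainly allowed to be local (indeed every local ring with a nonzero nilpotent is non-reduced), so both cases genuinely arise and must be treated.

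In the local case, I would simply invoke the first theorem of Section~5, which asserts that for a finite local commutative ring $A$ the crosscap number $\overline{\gamma}(L(EG(A)))$ is never $1$ or $2$; in particular it is never $2$, regardless of whether $A$ is reduced or not. In the non-local case, I would argue by contradiction: suppose $A$ is non-reduced, non-local, and $\overline{\gamma}(L(EG(A)))=2$. Then Theorem~\ref{c2leg} forces $A\cong \mathbb{Z}_3\times\mathbb{Z}_5$ or $A\cong \mathbb{Z}_2\times\mathbb{Z}_2\times\mathbb{Z}_3$. But each of these is a finite product of fields, hence reduced, contradicting the assumption that $A$ is non-reduced. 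Therefore $\overline{\gamma}(L(EG(A)))\neq 2$ in this case as well.

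Combining the two cases gives the statement. I expect no real obstacle here: the corollary is a bookkeeping consequence of the local-ring theorem together with Theorem~\ref{c2leg}, mirroring exactly the way Corollary~\ref{planatiry of essential grh for nonreduced} follows from Theorems~\ref{pegl} and~\ref{pegnl}, and the ``genus $\neq 2$'' corollary follows from Theorem~\ref{genus of local ring} and Theorem~\ref{g2leg}. The only point worth stating explicitly in the write-up is that the rings appearing in Theorem~\ref{c2leg} are reduced, which is clear since they are direct products of finite fields.
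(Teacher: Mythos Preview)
Your argument is correct and is exactly the intended one: the paper states this corollary without proof immediately after Theorem~\ref{c2leg}, so it is meant to be read as the direct consequence you describe, namely the local case handled by the first theorem of Section~5 and the non-local case by observing that the rings listed in Theorem~\ref{c2leg} are products of fields and hence reduced.
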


	\section*{Acknowledgments}
	The first author gratefully acknowledges Department of Science and Technology(DST), Govt. of India, for the award of the INSPIRE Fellowship [IF220238].

\end{document}